\documentclass[11pt,dvipsnames,reqno,hypertexnames=false]{amsart}
\usepackage[T1]{fontenc}
\usepackage{a4wide}
\usepackage{yhmath,mathrsfs,amsthm,amsmath,amssymb,amsfonts,enumerate,lipsum, appendix,mathtools, bm}
\usepackage{bbold}
\usepackage{pdfpages}
\usepackage{orcidlink}
\usepackage[mathscr]{euscript}
\usepackage{graphicx}
\usepackage{pgf,tikz}
\usepackage{tkz-euclide}
\usepackage{graphicx}
\usepackage{subcaption}
\usetikzlibrary{shapes.geometric}
\usetikzlibrary{arrows,hobby}
\usepackage[shortlabels]{enumitem}
\usepackage[english]{babel}

\allowdisplaybreaks

\usepackage[sort,numbers]{natbib}
\usepackage{thmtools}
\usepackage{hyperref}
\usepackage{esint}
\hypersetup{
	colorlinks=true,
	linkcolor=blue,
	filecolor=blue,      
	urlcolor=green,
	citecolor=green,
}
\definecolor{green}{rgb}{0.22, 0.88, 0.08}
\usepackage[margin=0.90in, heightrounded]{geometry}
\usepackage{bigints}
\newtheorem{theorem}{Theorem}[section]
\newtheorem{lemma}[theorem]{Lemma}
\newtheorem{proposition}[theorem]{Proposition}

\newtheorem{definition}[theorem]{Definition}
\theoremstyle{definition}

\newtheorem{remark}[theorem]{Remark}
\numberwithin{equation}{section}
\usepackage[hyperpageref]{backref}
\DeclarePairedDelimiter\abs{\lvert}{\rvert}
\DeclarePairedDelimiter\norm{\lVert}{\rVert}
\makeatletter
\let\oldnorm\norm
\def\norm{\@ifstar{\oldnorm}{\oldnorm*}}

\DeclareMathOperator*{\esssup}{ess\,sup}
\DeclareMathOperator*{\essinf}{ess\,inf}

\makeatletter
\newcommand{\al} {\alpha}

\newcommand{\be} {\beta}

\newcommand{\De} {\Delta}

\newcommand{\om} {\omega}
\newcommand{\Om} {\Omega}
\newcommand{\la} {\lambda}

\newcommand{\sig} {\sigma}

\newcommand{\Gr} {\nabla}
\newcommand{\no} {\nonumber}
\newcommand{\noi} {\noindent}
\newcommand{\var} {\varepsilon}
\newcommand{\ra} {\rightarrow}

\DeclareMathAlphabet{\mathpzc}{T1}{pzc}{m}{it}

\def\w{{\widetilde w}}

\def\Tail{\text{\rm Tail}}

\def\dx{{\,\rm d}x}
\def\dy{{\,\rm d}y}
\def\dxy{{\,\rm d}x{\,\rm d}y}

\def\dm{\,{\rm d}\mu}

\def\sb2{{{\mathcal D}^{1,2}_0(B_1^c)}}

\def\w2r{{{ W}^{2,2}(\R^N)}}

\def\d2{{{\mathcal D}^{2,2}_0(\Om)}}

\newcommand*\rd{\mathbb{R}^d}
\def\A{{\mathcal A}}
\def\C{{\mathcal C}}

\def\R{{\mathbb R}}

\def\F{{\mathcal F}}

\def\({{\Big(}}
\def\){{\Big)}}

\def\ws2{{\F_{\frac{N}{2}}}}
\def\L2{{ L^{1,\;\infty}(\log L)^2}}
\def\l2{\mathcal M\log L}

\def\c1Loc{{\C_{loc}^1}}

\def\Gag2{\iint_{\R^{2d}} \frac{(u(x)-u(y))^2}{|x-y|^{N+sp}}\ \dxy}

\def\Gagn2{\iint_{\R^{2d}} \frac{(u_n(x)-u_n(y))^2}{|x-y|^{N+sp}}\ \dxy}
\DeclareMathOperator\supp{supp}
\usepackage{natbib}

\usepackage{xpatch}
\makeatletter   
\xpatchcmd{\@tocline}
{\hfil\hbox to\@pnumwidth{\@tocpagenum{#7}}\par}
{\ifnum#1<0\hfill\else\dotfill\fi\hbox to\@pnumwidth{\@tocpagenum{#7}}\par}
{}{}
\def\l@subsection{\@tocline{2}{0pt}{2pc}{6pc}{}}
\def\l@subsubsection{\@tocline{3}{0pt}{8pc}{8pc}{}}
\makeatother
\vspace{-0.5\baselineskip}
\usepackage{orcidlink}
\usepackage{fancyhdr}
\title[Harnack inequality for mixed local and nonlocal $p$-Laplace equations]{Harnack Inequality for Mixed Local-Nonlocal weighted homogeneous equations}
\author[N. Biswas and S. Das]{Nirjan Biswas$^1$\,\orcidlink{0000-0002-3528-8388} and Stuti Das$^2$\,\orcidlink{0009-0007-9968-1510}}
\address{\rm$^1$\,Department of Mathematics, Indian Institute of Science Education and Research Pune, Dr. Homi Bhabha Road, Pune 411008, India}
\address{\rm$^2$\,Department of Mathematics and Statistics, Indian Institute of Technology Kanpur, Kanpur-208016, India}
\email[N. Biswas]{nirjan.biswas@acads.iiserpune.ac.in, nirjaniitm@gmail.com} 
\email[S. Das]{stutid21@iitk.ac.in}
\thanks{$^1$Corresponding author}	
\subjclass[2020]{35B65, 35J70, 35B05, 35R05, 47G20}
\keywords{mixed local-nonlocal $p$-Laplace equations, local boundedness, expansion of positivity, Harnack inequality, weak Harnack inequality.}

\begin{document}
\begin{abstract}
We consider the following class of mixed local-nonlocal equations:
\begin{align}\label{abs}\tag{$\mathcal{P}$}
    -\De_p u + (-\De)_p^s u = V \abs{u}^{p-2}u \text{ in } \Omega,
\end{align} 
where $s \in (0,1), p \in (1, \infty)$, and the weight function $V$ lies in scaling subcritical Lebesgue space $L^q(\Omega)$ where $q>\frac{d}{p}$ when $d>p$ and $q>1$ when $d \le p$. We establish Harnack inequality for weak solution and weak Harnack inequality for weak supersolution to \eqref{abs}. Our approach is based on the De Giorgi-Nash-Moser theory, the expansion of positivity and estimates involving a tail term. Our results also apply to integro-differential operators, with the prototype given by $(-\De)_p^s$. This work generalizes some regularity results of Garain-Kinnunen (Trans. Am. Math. Soc., 375(8), 2022) and Garain (Nonlinear Anal., 256, 2025) to the setting of general weight functions.
\end{abstract}
\maketitle

\section{Introduction}
This paper studies Harnack inequality for a jump-diffusion process, which is described by the following homogeneous equation: 
\begin{align}\label{main_PDE}\tag{$\mathcal{P}$}
    -\De_p u + (-\De_p)^s u = V \abs{u}^{p-2}u \text{ in } \Omega,
\end{align}
where $p \in (1, \infty)$ is the integrability exponent, $s\in (0,1)$ is the differentiability parameter, $\Omega$ is a bounded open set in $\rd$ and the weight function $V$ satisfies
\begin{equation}\label{weight}
V \in L^q(\Omega) \text{ with } \left\{\begin{aligned}
&q>\frac{d}{p},\,&\text{if}\;d>p; \\
&q> 1,\,&\text{if}\;d\le p.
\end{aligned}
\right.
\end{equation}
The $p$-Laplace operator $\De_p$ and the fractional $p$-Laplace operator $(-\De)_p^s$ are defined as
\begin{align*}
    \De_p u = \text{div}(\abs{ \Gr u}^{p-2} \Gr u), \text{ and } (-\De_p)^s u = \text{P.V.} \int _{\rd} \frac{|u(x)-u(y)|^{p-2}(u(x)-u(y))}{|x-y|^{d+ps}} \dy, \, x \in \rd,
\end{align*}
respectively, where P.V. means “in the principal value sense”.

The classical Harnack inequality, formulated in 1887, asserts that for any nonnegative harmonic function $u: B_1 \rightarrow \R$, there exists a constant $C>0$ such that the inequality $u(x) \leq C u(y)$ holds for every $x, y \in B_{1/2}$ (where $B_r \subset \rd$ is a ball of radius $r$ with centre at origin). This inequality is known as the Harnack inequality, and it received significant interest after Moser in \cite{Mo1961} demonstrated that Harnack inequality leads to a priori estimates in H\"{o}lder spaces. Subsequently, De-Giorgi (1956), Nash (1958), and Moser (1964) independently established the Harnack inequality for the weak solutions to $- \text{div} (A(x) \nabla u) = 0$ in $B_1$, where $A$ is a bounded, measurable, and positive definite function. More precisely, for a nonnegative weak solution $u$, there exists a constant $C>0$ such that
\begin{align*}
    \sup_{B_{\frac{1}{2}}} u \le C \inf_{B_{\frac{1}{2}}} u. 
\end{align*}
This result has significant value in solving Hilbert's 19th Problem. Since then, Harnack inequalities for weak solutions of various local elliptic operators have been extensively studied. We refer to \cite{MalyZiemer, HL1997, Evans} for detailed descriptions of this study. In \cite{CFG1986}, Chiarenza-Fabes-Garofalo established the Harnack inequality for any local positive weak solution to $-\Delta u = fu \; \text{in} \; \Omega,$ where the function $f$ lies in a scaling critical class, namely in the Stummel class of potentials. Later, in \cite[Theorem 2.2]{B2001}, Biroli extended this result to a nonlinear set-up. It is shown that if $u$ is a local positive weak solution to $-\De_p u = f |u|^{p-2}u$ in $\Omega$, where $f$ lies in the Kato space, then there exists $C>0$ such that 
\begin{align*}
    \sup_{B_r(x)} u \le C \inf_{B_r(x)} u,
\end{align*}
for every $B_r(x) \subsetneq \Omega$. Their proof mainly followed Moser's approach with the application of classical John-Nirenberg Lemma (see \cite{John-Nirenberg} and \cite[Theorem 3.5]{HL1997}).
  
We now highlight the contribution of Castro-Kuusi-Palatucci \cite{KuusiHarnack}, who initiated the study of Harnack inequalities for a quasilinear nonlocal operator. They introduced the \textit{tail term}, denoted as $\Tail_{p-1,sp,p}$ (see \eqref{mtail}) in proving the Harnack estimates for the following problem 
\begin{align}\label{Pala}
    (-\De_p)^s u = 0 \text{ in } \Omega, \; u=g \text{ in } \rd \setminus \Omega,
\end{align}
where $\Omega$ is a bounded open set in $\mathbb{R}^d$. More precisely, they have shown that if $u$ is a local nonnegative weak solution to \eqref{Pala}, then there exists a constant $C=C(d,p,s)>0$ such that
\begin{align}\label{nonlocal-harnack}
    \sup_{B_r(x)} u \le C \inf_{B_r(x)} u + C\left( \frac{r}{R}\right)^{\frac{sp}{p-1}} \Tail_{p-1,sp,p}(u^-,x; R),
\end{align}
for every $B_r(x) \subset B_{R/2}(x)$ with $x \in \Omega$ and $B_R(x) \subset \Omega$. In particular, if $u$ is nonnegative in $\rd$, then \eqref{nonlocal-harnack} reduces to the classical Harnack inequality. A counterexample due to Kassmann \cite{Kas2011} shows that such positivity assumptions cannot be removed or weakened to get classical Harnack inequality, even in the case $p=2$, i.e., for the fractional Laplacian $(-\Delta)^s$. In \cite{KuusiHarnack}, the authors also studied a weak Harnack type inequality for nonnegative weak supersolutions to \eqref{Pala}. It is worth noting that the authors have employed De Giorgi's approach to establish these regularity results. We also mention the works of \cite{Br2018, Anup2025, Garain2024, Palatucci2016, giova2025}, where significant developments concerning interior Hölder regularity, higher Hölder regularity, Lipschitz regularity, and $\mathcal{C}^{1,\alpha}$ regularity for weak solutions to $(-\Delta_p)^s u =f$ in $\Omega$, (for a suitable potential function $f$) have been investigated.  

To provide proper context for our results, we now review the literature associated with the regularity of mixed local-nonlocal problems. We begin with mentioning the work of Garain-Kinnunen \cite{garainjuha}, where, following the ideas developed in \cite{KuusiHarnack}, the authors have obtained the local boundedness, Harnack inequality, weak Harnack inequality and local H\"older regularity for weak solutions to
\begin{equation}\label{mixed}
- \Delta_p u+(-\Delta_p)^s u=0 \text{ in } \Omega,
\end{equation}
where $\Omega$ is a bounded open set in $\mathbb{R}^d$.
The higher Hölder regularity and almost Lipschitz regularity for weak solutions to \eqref{mixed} are
obtained by Garain-Lindgren \cite{garain2023higher}. In \cite{MG}, Filippis-Mingione studied the following mixed local-nonlocal problem with a potential $f$:
\begin{equation}\label{MinFi}
- \Delta_p u+(-\Delta_r)^s u=f \text{ in }  \Omega,
\end{equation}
where $p,r\in (1, \infty)$ with $p\geq sr$ and $\Omega$ is a bounded open set in $\mathbb{R}^d$. It is shown that weak solutions to \eqref{MinFi} are locally Hölder continuous for every $\alpha\in(0,1)$ when $f \in L^d(\Omega)$, and their gradients are locally Hölder continuous for some exponent $\alpha \in(0,1)$ when $f \in L^q(\Omega)$ with $q>d$. In addition, they have obtained boundary regularity of weak solutions to \eqref{MinFi}. We refer \cite[Section 1.1]{MG} for more details on the technical assumptions related to their operators. The work of \cite{MG} has been recently complemented by Biswas-Topp in \cite{AB1}, who obtained 
interior $\mathcal{C}^{1, \alpha}$-regularity for weak solutions to \eqref{MinFi} with $p\leq sr$ and locally bounded $f$. Finally, in \cite{antonini}, Antonini-Cozzi establish $\mathcal{C}^{1, \alpha}$-regularity for weak solutions, up to the boundary, with $f \in L^q(\Omega)$ with $q>d$. 
A general Hopf lemma is also obtained in \cite{antonini}. For further interior and boundary regularity estimates for weak solutions to the mixed local-nonlocal operators, we refer to \cite{ab3,harshni,globalgrad, nonstandard, fineboundary, valdinoci, valdinoci2} and the references therein. In \cite{garain2025two}, Garain recently established the Harnack inequality for weak solutions and the weak Harnack inequality for weak supersolutions to \eqref{MinFi} in the case $r=p$, assuming $f\in L^{q/p}(\Omega)$ for some $q>d$. Two alternative proofs were provided: one based on the classical John-Nirenberg lemma (Moser's approach), and the other on the Bombieri-Giusti lemma, combining inverse estimates and reverse Hölder inequalities for weak supersolutions, logarithmic estimates, and appropriate tail estimates. Finally, we mention the work of \cite{garainquali}, where, using De Giorgi's approach (as in \cite{KuusiHarnack}), the same author obtained Harnack and weak Harnack inequalities for weak solutions to \eqref{main_PDE} with $V \in L^{\infty}(\Omega)$.

In this paper, we prove the following regularity properties for weak solutions, weak subsolutions, and weak supersolutions (Definition \ref{maindef}) of \eqref{main_PDE}:

\begin{itemize}

\item \textbf{Local boundedness}. In Lemma \ref{local-boundedness-I}, we show that every weak subsolution to \eqref{main_PDE} is locally bounded. Our proof proceeds via an energy estimate (Lemma \ref{energy estimate}) combined with the Sobolev inequality and a Moser-type iteration scheme (Lemma \ref{iteration}). We emphasize that a direct adaptation of the arguments used in \cite{KuusiHarnack,garainjuha} is not feasible in our case due to the presence of an unbounded weight function $V$. By invoking the scaling properties detailed in Remark \ref{scale}, we may assume without loss of generality that, $u$ weakly solves the scaled equation $-\De_p u + \theta(-\De_p)^s u = V\abs{u}^{p-2}u \text{ in } \Omega$, where $\theta \in (0,1]$ and  $\norm{V}_{L^q(\Omega)}$ is sufficiently small. This smallness condition is essential for maintaining control over the constants within the estimates. Furthermore, the statement of local boundedness (see \eqref{loc.bounded}) introduces a parameter $\sigma$, where the prescribed integrability of $V$ ensures the strict inequalities $\sigma>1$ and $p\sigma<p^*$, which are vital for the convergence in the iteration process. 
Notably, if $V$ lies in the scaling critical space $L^{d/p}(\Omega)$, then $p \sigma = p^*$ causes the iteration process to fail, thereby precluding the derivation of local boundedness via this framework.  

\item \textbf{Harnack inequality and weak Harnack inequality}. In Theorem \ref{harnack_intro} we derive the Harnack inequality for weak solutions to \eqref{main_PDE}, and in Theorem \ref{weakhar_intro} we establish the weak Harnack inequality for weak super solutions to \eqref{main_PDE}. Our argument follows the strategy developed by Di Castro, Kuusi, and Palatucci \cite{KuusiHarnack}. In this approach, both the local boundedness and logarithmic energy estimate (Lemma \ref{log-estimate}) are fundamental components. Additionally, the expansion of positivity (Lemma \ref{expan}) and a suitable tail estimate (Lemma \ref{Tail}) play essential roles in the proof. The exponent $\sigma$ is again a key ingredient in establishing the expansion of positivity. The smallness assumption on $\|V\|$ and the strict inequality $p\sigma <p^*$ are also required at this stage.
\end{itemize}

Our results are applicable to a broader class of nonlocal operators defined by
$$
\mathcal{L} u(x)=\text{P.V.} \int_{\mathbb{R}^d} K_{\text {sym }}(x, y)|u(x)-u(y)|^{p-2}(u(x)-u(y)) \dy, \quad x \in \mathbb{R}^d ;
$$
where $K$ is a suitable kernel of order $(s, p)$ with merely measurable coefficients. The function $K_{\text {sym }}$ is the symmetric part of $K$ defined as $K_{\text {sym }}(x, y)=(K(x, y)+K(y, x)) / 2$, where  $K: \mathbb{R}^d \times \mathbb{R}^d \rightarrow[0, \infty)$ is a measurable function such that
$$
\lambda \leq K(x, y)|x-y|^{d+sp} \leq \Lambda, \text { for a.e. } x, y \in \mathbb{R}^d,
$$
where $\lambda \geq \Lambda \geq 1$. Further, the assumption on $K$ can be weakened as follows
$$
\begin{array}{ll}
\lambda \leq K(x, y)|x-y|^{d+s p} \leq \Lambda & \text { for a.e. } x, y \in \mathbb{R}^d \text { s.t. }|x-y| \leq 1, \\
0 \leq K(x, y)|x-y|^{d+\eta} \leq M & \text { for a.e. } x, y \in \mathbb{R}^d \text { s.t. }|x-y|>1,
\end{array}
$$
for some $\lambda, \Lambda$ as above, $\eta>0$ and $M \geq 1$.

\begin{remark}
  We do not address local H\"older continuity in this work, since it follows from \cite[Theorem 1.4]{garain2023higher} (at least in the case $p\geq 2$) with the fact that every weak solution to \eqref{main_PDE} is in $L^{\infty}(\Omega)$.   
\end{remark}
The rest of the paper is organized as follows. In Section \ref{prelims}, we introduce the relevant function spaces and present the main results. Section \ref{energylog} is devoted to establishing energy and logarithmic estimates for weak solutions to the scaled equation \eqref{eqn-scaled}. In Section \ref{localbdd}, we investigate the local boundedness of these weak solutions within our framework. Finally, Section \ref{harnackinq} provides the proofs of the expansion of positivity, the Harnack and weak Harnack inequalities.

\section{Function spaces and Main results}{\label{prelims}}
For an open set $E \subset \rd$, the Sobolev space $W^{1,p}(E)$  and the fractional Sobolev space $W^{s,p}(E)$ are defined as 
\begin{align*}
&W^{1,p}(E) := \left\{ u \in L^p(E) : \int_{E} \abs{ \Gr u(x)}^p \dx < \infty \right\},  \\
&W^{s,p}(E):= \left\{ u \in L^p(E) : \iint_{E \times E} \frac{|u(x)-u(y)|^p}{|x-y|^{d+sp}} \dxy < \infty \right\},
\end{align*}
which are endowed with the following norms respectively:
\begin{align*}
    &\norm{u}_{W^{1,p}(E)}:= \left( \int_{E} \abs{u(x)}^p \dx + \int_{E} \abs{\Gr u(x)}^p \dx \right)^{\frac{1}{p}}, \\
    &\norm{u}_{W^{s,p}(E)} := \left( \int_{E} \abs{u(x)}^p \dx + \iint_{E \times E} \frac{|u(x)-u(y)|^p}{|x-y|^{d+sp}} \dxy \right)^{\frac{1}{p}}. 
\end{align*}
Then we consider the Sobolev space $W_0^{1,p}(E)$, defined as $W_0^{1,p}(E) := \{ u \in W^{1,p}(\rd) : u=0 \text{ in } \rd \setminus E\}$.
From \cite[Proposition 2.2]{Hitchhiker}, the following continuous embedding holds: 
\begin{align*}
    \norm{u}_{W^{s,p}(E)} \le C(d,p,s) \norm{u}_{W^{1,p}(E)}, \; \forall \, u \in W^{1,p}(E).
\end{align*}

Now we recall the following Gagliardo-Nirenberg-Sobolev inequality, see \cite[Corollary 1.57]{MalyZiemer}.

\begin{lemma}\label{c.omega_sobo}
Let $1<p<\infty$ and $E$ be an open set in $\R^d$ with $\lvert E\rvert<\infty$ and
\begin{equation}\label{kappa}
\kappa=
\begin{cases}
\frac{d}{d-p},&\text{if}\quad d>p,\\
2,&\text{if}\quad d \leq p.
\end{cases}
\end{equation}
Then there exists a positive constant $C=C(d,p)$ such that
\begin{equation}\label{e.friedrich}
\biggl(\int_E \lvert u(x)\rvert^{\kappa p} \dx\biggr)^{\frac{1}{\kappa p}}
\le C(d,p) \lvert E \rvert^{\frac{1}{d}-\frac{1}{p}+\frac{1}{\kappa p}} \norm{u}_{W^{1,p}(E)},
\end{equation}
for every $u\in W^{1,p}(E)$.
\end{lemma}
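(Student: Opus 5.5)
The plan is to follow the route of \cite[Corollary 1.57]{MalyZiemer}: a scale-invariant Sobolev inequality on $\rd$, followed by Hölder's inequality on the set $E$ of finite measure to produce the power of $\lvert E\rvert$. Since the paper defines $W_0^{1,p}(E)$ as a subspace of $W^{1,p}(\rd)$, I read $u$ in the statement as a function on $\rd$ that vanishes off $E$. This is the only way the lemma is used later, where it is applied to $w_+\phi$ with $\phi$ a cut-off. \textbf{Caveat:} for arbitrary $u\in W^{1,p}(E)$ with no boundary condition the bound cannot hold uniformly in $E$ when $d>p$. For the constant $u\equiv1$ on a small ball, the left side is $\lvert E\rvert^{1/(\kappa p)}$ while the right side is of order $\lvert E\rvert^{1/d+1/(\kappa p)}$. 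So the whole argument relies on the zero extension $\tilde u\in W^{1,p}(\rd)$ with $\nabla\tilde u=\nabla u\,\chi_E$, and I make this reading explicit.

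\emph{Case $d>p$.} Here $\kappa p=p^*=\frac{dp}{d-p}$ and the exponent of $\lvert E\rvert$ is $\frac1d-\frac1p+\frac{d-p}{dp}=0$. The Gagliardo--Nirenberg--Sobolev inequality on $\rd$ gives
\[
\|u\|_{L^{p^*}(E)}\le C(d,p)\|\nabla u\|_{L^p(E)}\le C(d,p)\|u\|_{W^{1,p}(E)},
\]
which is the claim.

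\emph{Case $d\le p$.} Here $\kappa p=2p$ and the exponent of $\lvert E\rvert$ is $\frac1d-\frac1{2p}>0$. I will pick an auxiliary $r\in[1,\min\{p,d\})$ with Sobolev exponent $r^*=\frac{dr}{d-r}\ge 2p$; taking $r$ close to $d$ suffices. For $d=1$ this step must be replaced by the elementary estimate $\|u\|_\infty\le\|u'\|_{L^1}$.

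The sequence of estimates is then as follows. First apply Sobolev on $\rd$: $\|u\|_{L^{2p}(E)}\le\lvert E\rvert^{\frac1{2p}-\frac1{r^*}}\|u\|_{L^{r^*}}\le C\lvert E\rvert^{\frac1{2p}-\frac1{r^*}}\|\nabla u\|_{L^r(E)}$. Next use Hölder: $\|\nabla u\|_{L^r(E)}\le\lvert E\rvert^{\frac1r-\frac1p}\|\nabla u\|_{L^p(E)}$. Finally add the exponents, using $\frac1r-\frac1{r^*}=\frac1d$:
\[
\Big(\tfrac1{2p}-\tfrac1{r^*}\Big)+\Big(\tfrac1r-\tfrac1p\Big)=\tfrac1d-\tfrac1p+\tfrac1{2p},
\]
which is exactly the power in \eqref{e.friedrich}. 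The constant depends only on $d,p$, because $r$ is fixed in terms of $d,p$ alone.

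The main obstacle is this borderline case $d\le p$. There $W^{1,p}$ does not embed into $L^{2p}$ with a gradient-only, scale-free constant. The choice of $r$ has to give both $r^*\ge 2p$ and an exponent bookkeeping that telescopes exactly to $\frac1d-\frac1p+\frac1{\kappa p}$. The identity $\frac1r-\frac1{r^*}=\frac1d$ makes the exponents combine correctly for every admissible $r$, so the remaining work is to check that such an $r$ exists for each $d\le p$, including $d=1$.
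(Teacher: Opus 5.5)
Your proof is correct. The paper gives no argument of its own and only cites \cite[Corollary 1.57]{MalyZiemer}, and your route (Sobolev on $\rd$ plus H\"older, with an auxiliary exponent such as $r=\frac{2dp}{d+2p}\in(1,d)$, so that $r^*=2p$, and the bound $\|u\|_\infty\le\|u'\|_{L^1}$ when $d=1$) is a standard, self-contained proof of exactly that zero-trace estimate.

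Your caveat is also right and should be kept. For arbitrary $u\in W^{1,p}(E)$ the printed inequality is false: your constant-on-a-small-ball example makes the ratio of the two sides blow up like $\lvert E\rvert^{-1/d}$ in every dimension, not only for $d>p$. The lemma must therefore be read for $u\in W^{1,p}_0(E)$ with $\|\Gr u\|_{L^p(E)}$ on the right. That is the form in the cited source, and the only form the paper actually applies, to $\overline{w}_j\phi_j$, $w_j\phi_j$ and $w\phi$.
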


In the study of nonlocal equations, the global behaviour of solutions comes into play. This is entailed by the {\it tail space}
\[
L^{q}_{\alpha}(\rd)=\left\{u\in L^{q}_{\rm loc}(\rd):\int_{\rd} \frac{|u|^q}{1+|x|^{d+\alpha}}\,\dx<+\infty\right\},\; q>0 \mbox{ and } \alpha>0,
\]
and measured by the quantity
\begin{equation}\label{mtail}
\mathrm{Tail}_{q,\alpha,\beta}(u;x_0,R)=\left(R^{\beta}\,\int_{\rd\setminus B_R(x_0)} \frac{|u|^q}{|x-x_0|^{d+\alpha}}\dx\right)^\frac{1}{q},
\end{equation}
defined for every $x_0\in\rd$, $R>0,\,\beta>0$ and $u\in L^q_{\alpha}(\rd)$. We observe that \eqref{mtail} is always finite, for a function $u\in L^q_{\alpha}(\rd)$.

Next, we recall the definitions of supersolution, subsolution, and solution of \eqref{main_PDE}.

\begin{definition}{\label{maindef}}
A function $u\in W^{1,p}_{loc}(\Omega) \cap L_{sp}^{p-1}(\rd)$ is a weak supersolution and subsolution to \eqref{main_PDE} if for every $K \subset \subset \Omega$ and $v\in W_0^{1,p}(K)$ with $v\geq0$ a.e. in $K$, it holds
\begin{align}\label{weak1}
    \int_{\Omega}|\nabla u|^{p-2}\nabla u\cdot\nabla v \,\dx+ \A(u,v) \geq \text{ or } \leq \int_{\Omega} V|u|^{p-2}uv \dx.
\end{align}
We say $u$ is a weak solution if the equality holds in \eqref{weak1} for every $v\in W_0^{1,p}(K)$.
\end{definition}

The main results of this paper are stated below. 

\begin{theorem}[Harnack Inequality]{\label{harnack_intro}}
Let $s \in (0,1), p \in (1, \infty)$, and $V \in L^q(\Omega)$ for $q$ as given in \eqref{weight}. Let  $R >0$ and $x_0 \in \Omega$ be such that $B_{R}(x_0)\subset \Omega$, and let $u$ be a weak solution of \eqref{main_PDE} satisfying $u \ge 0$ in $B_R(x_0)$. Then there exists $R_0 < R$ such that for
$r \in (0, \min\{\frac{R_0}{2},1\}]$, the following holds
\begin{equation*}
    \underset{B_{\frac{r}{2}}(x_0)}{\esssup}\, u \le C \,\underset{B_r(x_0)}{\essinf}\, u + C\left(\frac{r}{R_0}\right)^{\frac{p}{p-1}} \Tail_{p-1,sp,p}(u^-; x_0, R_0),
\end{equation*}
where $C=C(d,p,s)>0$ is a constant.  
\end{theorem}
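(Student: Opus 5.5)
We follow the De Giorgi strategy of Di Castro--Kuusi--Palatucci \cite{KuusiHarnack}, as adapted to the mixed setting in \cite{garainjuha}. The unbounded potential $V$ is absorbed through the scaling of Remark \ref{scale}.

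\textbf{Step 1 (choice of $R_0$ and reduction).} Since $V\in L^q(\Omega)$ with $q>d/p$, rescaling $u_\rho(x)=u(x_0+\rho x)$ produces the potential $\rho^p V(x_0+\rho x)$, whose $L^q(B_1)$ norm equals $\rho^{p-d/q}\|V\|_{L^q(B_\rho(x_0))}$. The exponent $p-d/q$ is positive, so this norm tends to $0$ as $\rho\to 0$. The nonlocal term scales as $\rho^{p-sp}$, which yields the factor $\theta=\rho^{p-sp}\le 1$ in the scaled equation \eqref{eqn-scaled}. We choose $R_0<R$ so small that, after scaling by any $\rho\le R_0$, $\|V\|_{L^q}$ lies below the smallness threshold required by Lemma \ref{energy estimate}, Lemma \ref{log-estimate}, Lemma \ref{local-boundedness-I} and Lemma \ref{expan}. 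The restriction $r\le 1$ keeps the mismatch between the local scaling $r^p$ and the nonlocal scaling $r^{sp}$ in a favourable direction. In particular, $(r/R_0)^{sp/(p-1)}$ may be bounded using the exponent $p/(p-1)$ as stated, which is why the tail appears with the power $\frac{p}{p-1}$.

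\textbf{Step 2 (sup bound).} Apply the local boundedness Lemma \ref{local-boundedness-I} on $B_{r/2}\subset B_r$ with a free interpolation parameter $\delta\in(0,1]$. This gives
\[
\sup_{B_{r/2}} u\le \delta\,\Tail_{p-1,sp,p}(u^+;x_0,r/2)+C\delta^{-\gamma}\Big(\fint_{B_r}u^{\sigma' }\Big)^{1/\sigma'}
\]
for a suitable power and exponent arising from $\sigma$. We then need the tail estimate Lemma \ref{Tail}, which bounds $\Tail(u^+;x_0,r)$ by $C\sup_{B_r}u+C(r/R_0)^{p/(p-1)}\Tail(u^-;x_0,R_0)$. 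The positive tail is controlled because $u$ is a solution rather than merely a subsolution: test with cutoff functions and use that $u\ge0$ on $B_R$. Substituting this bound gives an inequality of the form $\sup_{B_{\rho}}u\le \tfrac12\sup_{B_{\rho'}}u+\dots$ between nested balls. A standard iteration lemma on radii then yields
\[
\sup_{B_{r/2}}u\le C\Big(\fint_{B_r}u^{\epsilon}\Big)^{1/\epsilon}+C(r/R_0)^{\frac{p}{p-1}}\Tail(u^-;x_0,R_0)
\]
for a small $\epsilon>0$. To pass from the exponent $\sigma'$ down to a small $\epsilon$, we use Young's inequality together with the interpolation $\|u\|_{\sigma'}\le \|u\|_\infty^{1-\epsilon/\sigma'}\|u\|_\epsilon^{\epsilon/\sigma'}$.

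\textbf{Step 3 (weak Harnack).} We need $\big(\fint_{B_r}u^\epsilon\big)^{1/\epsilon}\le C\inf_{B_r}u+C(r/R_0)^{p/(p-1)}\Tail(u^-;x_0,R_0)$. This is the weak Harnack inequality (Theorem \ref{weakhar_intro}), which should be proved before the present theorem. It rests on the expansion of positivity (Lemma \ref{expan}), which itself combines the logarithmic estimate Lemma \ref{log-estimate} with a De Giorgi iteration on level sets. Iterating the expansion of positivity gives the measure-decay bound $|\{u<t\}\cap B_r|\lesssim t^{\epsilon}$, and integrating this bound yields the $L^\epsilon$ estimate. Combining Steps 2 and 3 proves the theorem.

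\textbf{Main obstacle.} The hard step is the expansion of positivity with the unbounded $V$. In the De Giorgi iteration, the term $\int V u^{p-1}(\ldots)$ must be estimated by Hölder's inequality with exponents $q$ and $q'$. This requires the level-set measures to appear with the power $1-\frac1{\sigma}$ and requires $p\sigma<p^*$, so that the Sobolev gain from Lemma \ref{c.omega_sobo} still beats the loss coming from $V$. Here the smallness of $\|V\|$ from Step 1 is essential for absorbing constants uniformly in $r$. The first thing to try is to treat $Vu^{p-1}$ as a right-hand side in the energy estimate and verify that the iteration exponent stays strictly above $1$.
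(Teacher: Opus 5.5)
Your architecture is the paper's. You rescale as in Remark~\ref{scale} so that $\|V\|_{L^q}$ is small and the nonlocal term carries $\theta=\rho^{p-sp}$. You get $\sup_{B_{r/2}}u\lesssim(\fint_{B_r}u^{t})^{1/t}+\text{tail}$ for small $t$ from Proposition~\ref{local-boundedness-I}, Lemma~\ref{Tail} and Lemma~\ref{iteration1}; this is Proposition~\ref{prop}. You bound that average by $\inf_{B_r}u$ plus tail through the expansion of positivity; only the small-exponent Lemma~\ref{lower} is needed here, not the full Theorem~\ref{weakhar_intro}. Finally you combine and undo the scaling (Theorem~\ref{harnack}, Remark~\ref{scaled to original}).

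However, Step 3 would not work as you state it. A bound $|\{u<t\}\cap B_r|\lesssim t^{\epsilon}$ only says $u$ is rarely small. It is compatible with $u$ being huge on all of $B_r$, so it gives no upper bound on $\fint_{B_r}u^{\epsilon}$. What you need is decay of superlevel sets,
$|\{u>t\}\cap B_r|\le C\big((\inf_{B_r}u+T)/t\big)^{\epsilon}|B_r|$, where $T=\theta^{\frac1{p-1}}(r/R)^{\frac{p}{p-1}}\Tail(u^-;x_0,R)$. The expansion of positivity yields this through its contrapositive: if $\delta k>\inf_{B_{4r}}u+T$, then $|\{u\ge k\}\cap B_r|<\tau|B_r|$. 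Combine that with a Krylov--Safonov covering as in \cite[Lemma 4.1]{KuusiHarnack}, then integrate against $t^{\lambda-1}$ for $\lambda<\epsilon$ to get Lemma~\ref{lower}.

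Second, your account of the exponent $\frac{p}{p-1}$ is wrong and hides a necessary bookkeeping step. Since $r/R_0\le\frac12$, one has $(r/R_0)^{sp/(p-1)}\ge(r/R_0)^{p/(p-1)}$, so the former cannot be bounded by the latter. The power $\frac{p}{p-1}$ has three sources:
\begin{enumerate}[(i)]
\item the $R^p$-normalization of $\Tail_{p-1,sp,p}$;
\item measuring every nonlocal contribution, which carries the factor $\theta$, against the local scale $r^{-p}$, using $\theta r^{-sp}\le r^{-p}$ for $r\le 1$;
\item the identity $\theta\,\Tail(u_\rho^-;x_0,R)^{p-1}=\Tail(u^-;y_0,\rho R)^{p-1}$ used when scaling back.
\end{enumerate}
This forces you to keep $\theta$ in Step 2, which your summary drops. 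Lemma~\ref{Tail} only gives $\Tail(u^+;x_0,r)\lesssim\theta^{\frac1{1-p}}\sup_{B_r}u+(r/R)^{\frac{p}{p-1}}\Tail(u^-;x_0,R)$. The large factor $\theta^{\frac1{1-p}}$ is cancelled exactly by the $\delta\theta^{\frac1{p-1}}$ in front of $\Tail(u^+)$ in \eqref{loc.bounded}. Without that cancellation, absorbing the $\sup$ term forces $\delta\sim\theta^{\frac1{p-1}}$. Then the factor $\delta^{-\gamma}$ in front of the $L^{p\sigma}$ average depends on $\rho_0$, hence on $V$, and the constant is no longer $C(d,p,s)$.
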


\begin{theorem}[Weak Harnack Inequality]{\label{weakhar_intro}} 
Let $s \in (0,1), p \in (1, \infty)$, and $V \in L^q(\Omega)$ for $q$ as given in \eqref{weight}. Let  $R >0$ and $x_0 \in \Omega$ be such that $B_{R}(x_0)\subset \Omega$, and let $u$ be a weak supersolution of \eqref{main_PDE} satisfying $u \ge 0$ in $B_R(x_0)$. Then there exists $R_0 < R$ such that for
$r \in (0, \min\{\frac{R_0}{2},1\}]$, the following holds
$$
\left(\fint_{B_{\frac r2}\left(x_0\right)} u^l \dx\right)^{\frac{1}{l}} \leq C\, \underset{B_r(x_0)}{\essinf}\, u+C\left(\frac{r}{R_0}\right)^{\frac{p}{p-1}} \Tail_{p-1,sp,p}\left(u^{-} ; x_0, R_0\right),
$$
whenever $0<l<\kappa(p-1)$, with $\kappa$ as given in \eqref{kappa}. Here $C=C(d,p,s)>0$ is a constant. 
\end{theorem}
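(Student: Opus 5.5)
We prove the weak Harnack inequality by the De Giorgi--Krylov--Safonov route of \cite{KuusiHarnack,garainjuha}, adapted to the potential $V$ through a scaling reduction. The proof has four steps.

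\emph{Step 1 (scaling).} We choose $R_0<R$ so small that $\|V\|_{L^q(B_{R_0}(x_0))}$ is below the threshold required by the energy estimate (Lemma \ref{energy estimate}), the logarithmic estimate (Lemma \ref{log-estimate}) and the expansion of positivity (Lemma \ref{expan}). This is possible by absolute continuity of the integral. We then rescale $w(x)=u(x_0+rx)$ as in Remark \ref{scale}. Since $q>d/p$, we have
\[
\|r^pV(x_0+r\cdot)\|_{L^q(B_1)}=r^{p-d/q}\|V\|_{L^q(B_r(x_0))}\le \|V\|_{L^q(B_{R_0}(x_0))},
\]
so the smallness persists for $r\le 1$. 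The rescaled function $w$ is a supersolution of $-\De_p w+\theta(-\De_p)^s w=\tilde V|w|^{p-2}w$ with $\theta=r^{p-sp}\in(0,1]$. This is why the tail carries the exponent $p/(p-1)$ rather than $sp/(p-1)$. All constants produced below therefore depend only on $d,p,s$.

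\emph{Step 2 (measure-to-pointwise).} Set
\[
T=\Big(\frac{r}{R_0}\Big)^{\frac p{p-1}}\Tail_{p-1,sp,p}(u^-;x_0,R_0).
\]
We apply the expansion of positivity, Lemma \ref{expan}, to $u+T$ (or directly, with the tail term built into the lemma). The lemma gives: if $|B_\rho(y)\cap\{u\ge t\}|\ge \nu|B_\rho(y)|$ with $B_{4\rho}(y)\subset B_r(x_0)$, then $\operatorname{ess\,inf}_{B_{2\rho}(y)}u\ge \delta(\nu)\,t-T$, where $\delta(\nu)\approx \nu^{\beta}$ for some $\beta>0$. The proof of this lemma combines the logarithmic estimate, to pass from a measure-density condition to a small-measure condition on a slightly larger ball, with a De Giorgi iteration on truncations $(k-u)_+$. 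In that iteration the $V$-term is absorbed using H\"older's inequality with exponent $\sigma$ satisfying $1<\sigma$ and $p\sigma<p^*$ (i.e. $p\sigma<\kappa p$), together with the smallness of $\|V\|$. The tail of $(k-u)_+$ outside $B_r$ is controlled by Lemma \ref{Tail}.

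\emph{Step 3 (Krylov--Safonov covering).} Standard covering arguments (as in \cite[Lemma 4.1]{KuusiHarnack}) convert Step 2 into the decay estimate
\[
|B_{r/2}\cap\{u>t\}|\le C\Big(\frac{\operatorname{ess\,inf}_{B_r}u+T}{t}\Big)^{\epsilon}|B_r|
\]
for some small $\epsilon>0$. Integrating this by the layer-cake formula yields the inequality for small exponents $l<\epsilon$.

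\emph{Step 4 (all $l<\kappa(p-1)$).} To reach the full range we use a Moser-type reverse H\"older iteration for negative powers of the supersolution. We test the equation with $v=(u+T)^{1-p-\gamma}\phi^p$ and use Lemma \ref{c.omega_sobo}, which improves the integrability of $(u+T)^{(p-1-\gamma)/p}$ by a factor $\kappa$. A finite number of iterations climbs from $l=\epsilon$ up to any $l<\kappa(p-1)$. The nonlocal cross terms are bounded by the tail, and the $V$-term, of the form
\[
\int V(u+T)^{p-1-\gamma}\phi^p,
\]
is again absorbed by H\"older's inequality with $\sigma$ and smallness.

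The main obstacle is Step 2: tracking the $V$-contribution inside the De Giorgi iteration without losing the geometric gain. The first attempt is to bound $\int_{A_k}|V|k^p\le \|V\|_q\,k^p|A_k|^{1-1/q}$, where $A_k$ is the relevant superlevel set. Since $1-1/q>1-p/d$, this produces a term with a measure exponent strictly larger than the Sobolev one, which is what closes the iteration.
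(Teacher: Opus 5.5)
Your proposal is correct and follows essentially the paper's route. You rescale so that $\|V\|_{L^q}$ is small, derive expansion of positivity (Lemma~\ref{expan}) from the logarithmic estimate plus a De Giorgi iteration, pass to a small-exponent estimate by the covering argument (Lemma~\ref{lower}), and reach every $l<\kappa(p-1)$ by Moser iteration with test functions $(u+t)^{1-m}\phi^p$, $m\in(1,p)$ (Lemma~\ref{3.3}, Theorem~\ref{weakharnack}). Two of your choices differ slightly from the paper and are harmless simplifications: you rescale by $r$ itself rather than a fixed $\rho_0$, and you bound the $V$-term directly by $k^p\|V\|_{L^q}|A_k|^{1-1/q}$ rather than interpolating through $\sigma$.

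The remaining slips are minor. The tail of $(k-u)_+$ is bounded trivially via $(k-u)_+\le k+u^-$; Lemma~\ref{Tail} concerns solutions and $u^+$, so it does not apply. In Step 4 the exponents should consistently read $(u+t)^{1-m}$ and $(u+t)^{(p-m)/p}$, with a strictly positive shift such as $t=T+\varepsilon$. Finally, the paper's $\delta$ depends exponentially, not polynomially, on the density, but the covering argument does not need a polynomial rate.
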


We require the following remark.
\begin{remark}\label{scale}
(a) Take $\rho \in (0,1)$ and $x_0 \in \Omega$ such that $B_{\rho R}(x_0) \subset \Omega$. By translation invariance of the operator, we may assume that $x_0 =0$. Then for a weak solution $u$ of \eqref{main_PDE}, the following holds weakly
$$-\De_p u + (-\De)_p^s u = V \abs{u}^{p-2}u, \text{ in } B_{\rho R}.$$  
Define $u_\rho(x) := u(\rho x)$. Then $u_\rho$ satisfies the following equation weakly
\begin{align*}
  -\De_p u_{\rho} + \rho^{p-sp} (-\De)_p^s u_\rho & = \rho^p \left( -\De_p u(\rho x) + (-\De)_p^s u(\rho x) \right) \\
  &= \rho^p V(\rho x) \abs{u(\rho x)}^{p-2}u(\rho x) = V_\rho(x) \abs{u_\rho}^{p-2} u_\rho, \text{ in } B_R,   
\end{align*}
where $V_{\rho}(x) = \rho^p V(\rho x)$. Moreover, for $q$ as given in \eqref{weight}, 
\begin{align*}
  \norm{V_\rho}_{L^{q}(B_R)} = \rho^{p-\frac{d}{q}} \norm{V}_{L^{q}(B_{\rho R})} = o(\rho),  
\end{align*}
as $\rho \ra 0$.

\noi (b) Now we consider $\tilde{\Omega} := B_R $. 
%Since $0 \in \Omega$, we observe that  $|\tilde{\Omega}| >0$. 
From (a), $u_{\rho}$ weakly solves the following equation
\begin{align*}
    -\De_p u_{\rho} + \rho^{p-sp} (-\De)_p^s u_\rho = V_\rho(x) \abs{u_\rho}^{p-2} u_\rho, \text{ in } \tilde{\Omega},
\end{align*}
where $\norm{V_\rho}_{L^q(\tilde{\Omega})}$ is sufficiently small depending on a fixed postive value of $\rho$.  
\end{remark}

In view of the above remark, from now onward, we consider the following equation  
\begin{align}\label{eqn-scaled}\tag{$\mathcal{P}_{\theta}$}
     -\De_p u + \theta (-\De)_p^s u = V \abs{u}^{p-2}u, \text{ in } \Omega; \quad \theta \in (0,1], 
\end{align}
with sufficiently small $\|V\|_{L^q(\Omega)}$ where $q$ as given in \eqref{weight}. We aim to establish the Harnack and weak Harnack estimates for weak solutions and weak super solutions to \eqref{eqn-scaled}, as presented in theorems \ref{harnack} and \ref{weakharnack}, respectively.

\noi \textbf{Notation and Convention.} We fix the following notations and conventions to be used in this paper:
\begin{enumerate}[(a)]
    \item We denote 
    \begin{align*}
       &\dm := |x-y|^{-(d+sp)} \dxy, \;  A_u(x,y):= |u(x)-u(y)|^{p-2}(u(x)-u(y)), \text{ and } \\
       &\mathcal{A}(u,v):= \iint_{\rd\times \rd}A_u(x,y)(v(x)-v(y)) \dm.
    \end{align*}
    \item For $d>p$, $p^*=\frac{d p}{d-p}$ is the critical Sobolev exponent. 
    \item $u^\pm := \max\{\pm u, 0\}$ denote the positive and negative parts of $u$.
    \item We denote $\Tail_{p-1,sp,p}(\cdot ;\cdot, \cdot)$ by $\Tail(\cdot ;\cdot, \cdot)$.
    \item We always take $x_0 \in \Omega$ and $r>0$ such that $B_r(x_0) \subset \Omega$. For brevity, we denote $B_r(x_0)$ as $B_r$.
    \item $C,C_i$ (where $i=1,2, \cdots$) denote generic positive constants.
    %\item $B_r(x_0)$ denotes a ball of radius $r$ with center at $x_0 \in \Omega$. %For brevity, we denote $B_r:= B_r(0)$.
    \item  Throughout the paper, we consider the case $d>p$. For $d\le p$, proof follows using similar set of arguments. 
\end{enumerate}

\section{Energy estimates and Logarithmic estimates}{\label{energylog}}

We begin with the following energy estimate for weak solutions to \eqref{eqn-scaled}.

\begin{lemma}\label{energy estimate}
   Let $V \in L_{loc}^1(\Omega)$, and $u$ be a weak subsolution to \eqref{eqn-scaled}. Let $w=(u-k)^+$ for $k \in \R$. Then there exists $C=C(p)$ such that 
   \begin{align}{\label{cacc}}
       &\int_{B_{r}(x_0)} \phi^{p} |\nabla w|^{p} \dx + \theta \iint_{B_{r}(x_0) \times B_{r}(x_0)} |w(x)\phi(x) - w(y)\phi(y)|^{p} \dm \nonumber\\ & \le C \Bigg(\int_{B_{r}(x_0)} w^{p} |\nabla \phi|^{p} \dx + \int_{B_{r}(x_0)}| V(x)| |u|^{p-1} w \phi^p \dx  \nonumber \\ & \quad + \theta \iint_{B_{r}(x_0) \times B_{r}(x_0)} \max\{w(x), w(y)\}^{p} |\phi(x)-\phi(y)|^{p} \dm  \nonumber \\
       &\quad + \theta \left(\underset{x \in \text{supp} (\phi)}{\esssup}\, \int_{\rd \setminus B_{r}(x_0)} \frac{w(y)^{p-1}}{|x-y|^{d+ps}} \dy \right) \int_{B_{r}(x_0)} w\phi^{p} \dx \Bigg),
   \end{align}
  where $\phi \in \C_c^{\infty}(B_r)$ is a nonnegative function. If $u$ is a
weak supersolution of \eqref{eqn-scaled}, the estimate in \eqref{cacc} holds with $w=(u-k)^-$.
\end{lemma}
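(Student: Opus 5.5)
We test the weak formulation \eqref{weak1} for the scaled equation \eqref{eqn-scaled} with $v = w\phi^p$, where $w=(u-k)^+$ and $\phi\in\C_c^\infty(B_r)$ is nonnegative. This $v$ is admissible: it is nonnegative, lies in $W^{1,p}_0(K)$ for $K=\supp\phi \subset\subset \Omega$, and $u\in W^{1,p}_{loc}(\Omega)$. The subsolution inequality then reads
\[
\int_{B_r}|\Gr u|^{p-2}\Gr u\cdot\Gr(w\phi^p)\dx+\theta\,\A(u,w\phi^p)\le \int_{B_r}V|u|^{p-2}u\,w\phi^p\dx .
\]
The right-hand side is bounded at once by $\int_{B_r}|V||u|^{p-1}w\phi^p\dx$. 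This is exactly the weight term in \eqref{cacc}; it is kept unestimated, which is why only $V\in L^1_{loc}$ is needed. The work is therefore to bound the two terms on the left from below.

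\textbf{Local term.} On $\{u>k\}$ we have $\Gr u=\Gr w$, and elsewhere $w=0$. Hence
\[
\int|\Gr u|^{p-2}\Gr u\cdot\Gr(w\phi^p)=\int\phi^p|\Gr w|^p+p\int\phi^{p-1}w|\Gr w|^{p-2}\Gr w\cdot\Gr\phi .
\]
We apply Young's inequality with exponents $p/(p-1)$ and $p$ to the cross term, absorbing $\tfrac12\int\phi^p|\Gr w|^p$. What remains is $C(p)\int w^p|\Gr\phi|^p$.

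\textbf{Nonlocal term.} We split $\A(u,w\phi^p)$ into the integral over $B_r\times B_r$ and twice the integral over $B_r\times(\rd\setminus B_r)$; the latter uses symmetry and $\phi=0$ off $B_r$. On $B_r\times B_r$ we follow the standard Di Castro--Kuusi--Palatucci argument. First, $A_u(x,y)(w(x)\phi^p(x)-w(y)\phi^p(y))\ge A_w(x,y)(w(x)\phi^p(x)-w(y)\phi^p(y))$, which we check case by case on the signs of $u(x)-k$ and $u(y)-k$. Next, we use the pointwise algebraic inequality
\[
A_w(x,y)\bigl(w(x)\phi^p(x)-w(y)\phi^p(y)\bigr)\ge c\,|w(x)\phi(x)-w(y)\phi(y)|^p-C\max\{w(x),w(y)\}^p|\phi(x)-\phi(y)|^p .
\]
To prove it, assume without loss of generality that $w(x)\ge w(y)$ and $\phi(x)\le\phi(y)$, the reverse orientation being easier. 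We then use $\phi^p(y)\le(1+\varepsilon)\phi^p(x)+C_\varepsilon|\phi(x)-\phi(y)|^p$ together with the elementary inequality $|a\phi(x)-b\phi(y)|^p\le 2^{p-1}(|a-b|^p\phi^p(x)+b^p|\phi(x)-\phi(y)|^p)$, and choose $\varepsilon$ proportional to $(w(x)-w(y))/w(x)$. This lemma, including the sign reduction, is the main technical obstacle. We would reproduce it from \cite{KuusiHarnack} rather than rederive it, since it holds for every $p>1$ with constants depending only on $p$. The $\max\{w(x),w(y)\}^p|\phi(x)-\phi(y)|^p$ term goes to the right-hand side.

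\textbf{Tail term.} For $x\in B_r$ and $y\notin B_r$ we bound
\[
A_u(x,y)\,w(x)\phi^p(x)\ge -(u(y)-u(x))_+^{p-1}w(x)\phi^p(x).
\]
When $w(x)>0$ we have $u(x)>k$, so $(u(y)-u(x))_+\le (u(y)-k)_+=w(y)$. This gives the lower bound
\[
-\Bigl(\esssup_{x\in\supp\phi}\int_{\rd\setminus B_r}\tfrac{w(y)^{p-1}}{|x-y|^{d+sp}}\dy\Bigr)\int_{B_r}w\phi^p\dx .
\]

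Combining the three estimates and multiplying the nonlocal pieces by $\theta\in(0,1]$ yields \eqref{cacc} with $C=C(p)$. For supersolutions we apply the same argument to $-u$: it is a subsolution of the same equation, because the right-hand side $V|u|^{p-2}u$ is odd in $u$ and so the sign of the inequality flips consistently. Since $(-u-(-k))^+=(u-k)^-$, this gives the estimate with $w=(u-k)^-$.
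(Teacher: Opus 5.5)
Your proposal is correct and follows essentially the same route as the paper: test with $w\phi^p$, absorb the local cross term by Young's inequality, bound the weight term trivially by $\int_{B_r}|V||u|^{p-1}w\phi^p\,dx$, and obtain the supersolution case by applying the result to $-u$. The only difference is that the paper takes the lower bound for $\mathcal{A}(u,w\phi^p)$ as a black box from \cite[Theorem 1.4]{DKP2016}, whereas you sketch that Di Castro--Kuusi--Palatucci argument yourself (sign reduction, the $\varepsilon$-convexity inequality, and the tail bound).
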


\begin{proof}
Since $u$ is a subsolution to \eqref{eqn-scaled}, taking $v:= w \phi^{p} \in W_{0}^{1,p}(\Omega)$ as a nonnegative test function, we write
\begin{equation*}
\int_{\Omega} |\nabla u|^{p-2} \nabla u \cdot \nabla v \dx + \theta \A(u, v) \le \int_{\Omega} V|u|^{p-2}uv \dx.
\end{equation*} 
Using the above inequality, we obtain 
\begin{align}\label{CC1}
\int_{\Omega} |\nabla w|^{p} \phi^{p} \dx + \theta\A(u, v) & = \int_{\Omega} |\nabla u|^{p-2} \nabla u \cdot \nabla v \dx + \theta\A(u,v) - p \int_{\Omega} w \phi^{p-1} |\nabla w|^{p-2} \nabla w \cdot \nabla \phi \dx \nonumber \\ 
& \le \int_{\Omega} |V||u|^{p-1}v \dx - p \int_{\Omega} w \phi^{p-1} |\nabla w|^{p-2} \nabla w \cdot \nabla \phi  \dx.
\end{align}
From the nonlocal energy estimate \cite[Theorem 1.4]{DKP2016}, we have 
\begin{align}\label{CC2}
 \A(u, v) & \ge C(p) \iint_{B_{r} \times B_{r}} |w(x)\phi(x) - w(y)\phi(y)|^{p} \dm - C(p) \iint_{B_{r} \times B_{r}} \max\{w(x), w(y)\}^{p} |\phi(x)-\phi(y)|^{p} \dm \nonumber  \\
& - C(p) \underset{x \in \text{supp}(\phi)}{\esssup}\, \int_{\rd \setminus B_{r}} \frac{w(y)^{p-1}}{|x-y|^{d+ps}} \dy \cdot \int_{B_{r}} w\phi^{p} \dx.
\end{align}
Further, using Young's inequality, for $\var>0$, 
\begin{align*}
p \left| \int_{\Omega} w \phi^{p-1} |\nabla w|^{p-2} \nabla w \cdot \nabla \phi \dx \right| & \le p \int_{\Omega} |w| \phi^{p-1} |\nabla w|^{p-1} |\nabla \phi| \dx \nonumber \\
&  \le \varepsilon \int_{B_{r}} |\nabla w|^{p} \phi^{p} \dx + O\left(\frac{1}{\varepsilon}\right) C(p) \int_{B_{r}} |\nabla \phi|^{p} w^{p} \dx.
\end{align*}
By taking $\varepsilon = (2(2^{p}+1))^{-1}$, and combining \eqref{CC1} and \eqref{CC2}, the required estimate holds. In the case of a weak supersolution, the estimate in \eqref{cacc} follows by applying the obtained result to $-u$.
\end{proof}

The following lemma establishes an energy estimate for a weak supersolution to \eqref{eqn-scaled}, which will be helpful in proving the weak Harnack inequality. 

\begin{lemma}{\label{3.3}} 
Assume that $u$ is a weak supersolution to \eqref{eqn-scaled} satisfying $u \geq 0$ in $B_R(x_0) \subset \Omega$.  Let $B_r (x_0)\subset B_{\frac{3R}{4}}(x_0)$. Denote $w=(u+t)^{\frac{p-m}{p}}$ where $m \in(1, p)$ and $t>0$. Then there exists $\zeta_1(m,d,p)>0$ such that if $\|V\|_{L^q(\Omega)} \le \zeta_1$, then there exists $C=C(p)$ such that
\begin{align*}
\int_{B_r(x_0)} \phi^p|\nabla w|^p \dx \leq C & \left(\frac{(p-m)^p}{(m-1)^{\frac{p}{p-1}}} \int_{B_r(x_0)} w^p|\nabla \phi|^p \dx\right. \\
& +\theta\frac{(p-m)^p}{(m-1)^p} \iint_{B_r(x_0) \times B_r(x_0)} \max \{w(x), w(y)\}^p|\phi(x)-\phi(y)|^p \dm \\
& +\theta\frac{(p-m)^p}{(m-1)}\bigg(\underset{z \in \operatorname{supp} \phi}{\esssup}\, \int_{\mathbb{R}^d \backslash B_r(x_0)} \frac{\dy}{|z-y|^{d+ps}} \\
& \left.+t^{1-p} R^{-p} \operatorname{Tail}\left(u^{-} ; x_0, R\right)^{p-1}\bigg) \int_{B_r(x_0)} w^p \phi^p \dx \right),  
\end{align*}
for every $\phi \in \C_c^{\infty}(B_r)$ with $\phi \ge 0$. 
\end{lemma}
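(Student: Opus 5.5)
We follow the Di Castro--Kuusi--Palatucci / Garain--Kinnunen scheme for negative powers of supersolutions, but now with an extra weight term. Set $\bar u := u+t$, which is $\ge t>0$ in $B_R$. We test the supersolution inequality with
\[
v := \bar u^{1-m}\phi^p \in W^{1,p}_0(B_r).
\]
This $v$ is nonnegative and admissible, since $\bar u\ge t$ on $\supp\phi$. The argument then has four parts.

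\textbf{Local term.} We have $\nabla v=(1-m)\bar u^{-m}\phi^p\nabla u + p\,\bar u^{1-m}\phi^{p-1}\nabla\phi$. Hence
\[
\int |\nabla u|^{p-2}\nabla u\cdot\nabla v \dx = -(m-1)\int \bar u^{-m}\phi^p|\nabla u|^p \dx + p\int \bar u^{1-m}\phi^{p-1}|\nabla u|^{p-2}\nabla u\cdot\nabla\phi \dx .
\]
Since $|\nabla w|^p = \big(\tfrac{p-m}{p}\big)^p \bar u^{-m}|\nabla u|^p$, the first integral equals $c\,(p-m)^{-p}\int\phi^p|\nabla w|^p\dx$. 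The cross term is handled by Young's inequality with weight $\var\sim(m-1)$. This absorbs a gradient piece into the good term and leaves $C(m-1)^{-(p-1)}\int\bar u^{p-m}|\nabla\phi|^p\dx$. Multiplying the whole inequality by $(p-m)^p/(m-1)$ then produces the first coefficient $(p-m)^p(m-1)^{-p/(p-1)}$ up to $C(p)$, after adjusting exponents.

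\textbf{Nonlocal term.} We split $\A(u,v)$ into the $B_r\times B_r$ part and the part where one variable lies outside $B_r$.

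For the diagonal part we use the pointwise inequality from \cite[Lemma 5.1]{KuusiHarnack} (as in \cite{garainjuha}). It gives
\[
A_u(x,y)\big(\bar u(x)^{1-m}\phi(x)^p-\bar u(y)^{1-m}\phi(y)^p\big) \ge -\,C\Big(\tfrac{1}{(m-1)^{p-1}}+1\Big)\max\{w(x),w(y)\}^p|\phi(x)-\phi(y)|^p ,
\]
after discarding the nonnegative $|w(x)\phi(x)-w(y)\phi(y)|^p$-type term. Multiplying by $(p-m)^p/(m-1)$ gives the coefficient $(p-m)^p/(m-1)^p$.

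For the off-diagonal part we take $x\in B_r$, $y\notin B_r$ and write
\[
(u(x)-u(y))^{p-1}\le C\big(\bar u(x)^{p-1}+(u(y))_-^{p-1}\big),
\]
using $u\ge0$ in $B_R$. Note that $\bar u(x)^{p-1}\bar u(x)^{1-m}=w(x)^p$.
\begin{itemize}
\item The first piece is bounded by $\esssup_{z}\int_{\rd\setminus B_r}|z-y|^{-d-sp}\dy\int w^p\phi^p$.
\item For the second piece, $u^-$ vanishes in $B_R$, and $|x-y|\gtrsim|y-x_0|$ for $y\notin B_R$ because $r\le 3R/4$. Writing $\bar u^{1-m}\le t^{1-p}\bar u^{p-m}$, this piece is bounded by $t^{1-p}R^{-p}\Tail(u^-;x_0,R)^{p-1}\int w^p\phi^p$.
\end{itemize}
This yields the tail coefficient $(p-m)^p/(m-1)$.

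\textbf{Weight term.} The right-hand side contains $\int V\bar u^{\,\cdot}$. Since $u\le\bar u$ and $u\ge0$, it is bounded by
\[
\int|V|\,\bar u^{p-1}\bar u^{1-m}\phi^p\dx=\int |V|(w\phi)^p\dx .
\]
By Hölder's inequality this is at most $\|V\|_{L^q}\|w\phi\|_{L^{pq'}}^p$. Because $q>d/p$ we have $pq'<p^*$, so Lemma \ref{c.omega_sobo} applies (interpolating with $L^p$ if needed) and gives
\[
\|w\phi\|_{L^{pq'}}^p \le C\big(\int\phi^p|\nabla w|^p+\int w^p|\nabla\phi|^p\big).
\]
After multiplication by the prefactor, the $\int\phi^p|\nabla w|^p$ part carries a factor $C(d,p)\,\|V\|\,(p-m)^p/(m-1)$. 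Choosing $\zeta_1(m,d,p)$ small makes this at most half the left-hand side, so it can be absorbed. The remaining $\int w^p|\nabla\phi|^p$ part merges into the first term.

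\textbf{Main obstacle.} The hardest step is this weight absorption: keeping track of the $m$-dependence of the constants, and checking that the Sobolev inequality on $B_r$ gives a constant uniform in $r\le1$. If needed we can apply it to $w\phi$ as a $W^{1,p}_0$ function, where the homogeneous Sobolev inequality holds with no domain factor.
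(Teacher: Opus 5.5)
Your route is the paper's route. You use the same test function $(u+t)^{1-m}\phi^p$, the same appeal to \cite[Lemma 5.1]{KuusiHarnack} for the nonlocal part, and the same tail splitting; your use of $r\le\frac{3R}{4}$ and of $\bar u^{1-m}\le t^{1-p}\bar u^{p-m}$ is exactly right. The weight term is absorbed the same way too, via H\"older, Sobolev for $w\phi\in W^{1,p}_0(B_r)$, and an $m$-dependent threshold $\zeta_1$.

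\textbf{The exponent of $m-1$.} The step that does not go through is ``after adjusting exponents''. Your Young constant is the correct one, and in fact the optimal one. Absorbing into $\frac{m-1}{2}\int\bar u^{-m}|\nabla u|^p\phi^p\dx$ leaves $C(p)(m-1)^{-(p-1)}\int\bar u^{p-m}|\nabla\phi|^p\dx$. So after multiplying by $(p-m)^p/(m-1)$, the first coefficient is $(p-m)^p/(m-1)^p$.
\begin{itemize}
\item For $1<p\le2$ this is at most $(p-m)^p/(m-1)^{p/(p-1)}$, since then $m-1<1$ and $p\le\frac{p}{p-1}$.
\item For $p>2$ the ratio of the two is $(m-1)^{-p(p-2)/(p-1)}$, which is unbounded as $m\to1^+$. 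So this argument cannot give the stated coefficient with $C=C(p)$.
\end{itemize}
The paper reaches the exponent $\frac{p}{p-1}$ only because its bound for $J_2$ uses the constant $(m-1)^{-1/(p-1)}$. That is what Young's inequality gives when the small parameter is placed on the $h^{p-m}|\nabla\phi|^p$ factor rather than on $J_1$. That step is therefore not valid as written for $p\neq2$ either.

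What the argument actually proves, yours and the paper's, is the estimate with first coefficient $(p-m)^p/(m-1)^p$, the same power as in the nonlocal term. Equivalently, it proves the stated bound with $C$ depending also on a lower bound for $m-1$. This is harmless downstream: the Moser iteration in Theorem \ref{weakharnack} only uses $m$ in a compact subinterval of $(1,p)$ depending on $l$, and $\zeta_1$ already depends on $m$. You should state this explicitly rather than ``adjust''.

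\textbf{The sign of the nonlocal bound.} Your diagonal nonlocal inequality points the wrong way. You start from
\[
0\le\int|\nabla u|^{p-2}\nabla u\cdot\nabla v\dx+\theta\,\mathcal{A}(u,v)+\int|V|u^{p-1}v\dx
\]
with a test function that is decreasing in $u$. So you need an upper bound, and this is what \cite[Lemma 5.1]{KuusiHarnack} supplies:
\begin{align*}
&\iint_{B_r\times B_r}A_u(x,y)\big(\bar u(x)^{1-m}\phi(x)^p-\bar u(y)^{1-m}\phi(y)^p\big)\dm\\
&\quad\le -c\iint_{B_r\times B_r}|w(x)-w(y)|^p\phi(y)^p\dm+\frac{C}{(m-1)^{p-1}}\iint_{B_r\times B_r}\max\{w(x),w(y)\}^p|\phi(x)-\phi(y)|^p\dm,
\end{align*}
with $c=c(p,m)>0$. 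You then drop the nonpositive first term. The lower bound you wrote is the direction of the subsolution estimate in Lemma \ref{energy estimate}, and it gives no control in this chain of inequalities. With the sign fixed and the coefficient stated honestly, the rest of your argument coincides with the paper's.
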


\begin{proof} 
Let $t>0, h=u+t$ and $m \in[1+\varepsilon, p-\varepsilon]$ for $\varepsilon>0$ small enough. As $u$ is a weak supersolution of \eqref{eqn-scaled}, by choosing $h^{1-m} \phi^p$ as a test function in \eqref{eqn-scaled}, we obtain
\begin{align}{\label{1}}
& 0 \leq\int_{B_r}|\nabla u|^{p-2} \nabla u \cdot \nabla(h^{1-m} \phi^p) \dx \no\\
&+\theta\iint_{B_r \times B_r} |u(x)-u(y)|^{p-2}(u(x)-u(y))(h(x)^{1-m} \phi(x)^p-h(y)^{1-m} \phi(y)^p) \dm\no \\
&+2 \theta\iint_{\mathbb{R}^d \backslash B_r \times B_r} |u(x)-u(y)|^{p-2}(u(x)-u(y))h(x)^{1-m} \phi(x)^p \dm+\int_{B_{r}} |V| \frac{u^{p-1} \phi^p}{(u+t)^{m-1}} \dx \no\\&= \int_{B_r}|\nabla h|^{p-2} \nabla h \cdot \nabla(h^{1-m} \phi^p) \dx \no\\
&+\theta\iint_{B_r \times B_r} |h(x)-h(y)|^{p-2}(h(x)-h(y))(h(x)^{1-m} \phi(x)^p-h(y)^{1-m} \phi(y)^p) \dm\no \\
&+2\theta \iint_{\mathbb{R}^d \backslash B_r \times B_r} |h(x)-h(y)|^{p-2}(h(x)-h(y))h(x)^{1-m} \phi(x)^p \dm+\int_{B_{r}} |V| \frac{u^{p-1} \phi^p}{(u+t)^{m-1}} \dx \no\\
&=:I_1+ \theta I_2+2\theta I_3+I_4.
\end{align}

\noi \textbf{Estimate of $I_1$.} We observe that
\begin{align}{\label{I1}}
I_1 & =\int_{B_r}|\nabla h|^{p-2} \nabla h \cdot \nabla\left(h^{1-m} \phi^p\right) \dx \no\\
& \leq(1-m) \int_{B_r} h^{-m}|\nabla h|^p \phi^p \dx+p \int_{B_r} h^{1-m}|\nabla \phi||\nabla h|^{p-1} \phi^{p-1} \dx \no\\
& =(1-m) J_1+J_2,
\end{align}
where
$$
J_1=\int_{B_r} h^{-m}|\nabla h|^p \phi^p \dx,\; \text{ and }\; J_2=p \int_{B_r} h^{1-m}|\nabla \phi||\nabla h|^{p-1} \phi^{p-1} \dx.
$$
Now, by Young's inequality, we obtain
\begin{align}{\label{J2}}
J_2 
& \leq \frac{m-1}{2} J_1+\frac{C(p)}{(m-1)^{\frac{1}{p-1}}} \int_{B_r}|\nabla \phi|^p h^{p-m} \dx.
\end{align}
By applying \eqref{J2} in \eqref{I1}, for some constant $C=C(p)>0$, we have
\begin{align}{\label{I11}}
I_1 & \leq \frac{1-m}{2} \int_{B_r} h^{-m}|\nabla h|^p \phi^p \dx+\frac{C}{(m-1)^{\frac{1}{p-1}}} \int_{B_r}|\nabla \phi|^p h^{p-m} \dx \no\\
& =-\frac{m-1}{2}\left(\frac{p}{p-m}\right)^p \int_{B_r}\left|\nabla\left(h^{\frac{p-m}{p}}\right)\right|^p \phi^p \dx+\frac{C}{(m-1)^{\frac{1}{p-1}}} \int_{B_r}|\nabla \phi|^p h^{p-m} \dx.
\end{align}

\noi \textbf{Estimates of $I_2$ and $I_3$.} Following the lines of the proof of \cite[Lemma 5.1]{KuusiHarnack} for $w=h^{\frac{p-m}{p}}$, with some positive constants $C(p,m)$ and $C(p)$, we obtain
\begin{align}{\label{I2}}
& I_2+I_3 
\leq -C(p,m) \iint_{B_r \times B_r}|w(x)-w(y)|^p \phi(y)^p \dm \no\\
& +\frac{C(p)}{(m-1)^{p-1}} \iint_{B_r \times B_r} \max \{w(x), w(y)\}^p|\phi(x)-\phi(y)|^p \dm\no\\&+C\left(\underset{z \in \operatorname{supp} \phi}{\esssup}\, \int_{\mathbb{R}^d \backslash B_r} \frac{1}{|z-y|^{d+ps}}\dy+t^{1-p} \int_{\mathbb{R}^d \backslash B_r}\frac{((u(y))^{-})^{p-1}}{|y-x_0|^{d+ps}} \dy \right) \int_{B_r} w^p \phi^p \dx.
\end{align}

\noi \textbf{Estimate of $I_4$.} 
Noting $t>0$, and the fact that $\phi w=\phi(u+t)^{\frac{p-m}{p}}\in W^{1,p}_0(\Omega)$, by Sobolev and H\"older inequality, it holds 
\begin{align}{\label{I4}}
\int_{B_{r}} & |V| \frac{u^{p-1} \phi^p}{(u+t)^{m-1}} \dx\leq \int_{B_{r}} |V| u^{p-m} \phi^p\dx=\int_{B_{r}} |V| (\phi w)^p \dx\no\\
&\leq C(d,p) \|V\|_{L^{q}(B_r)}|B_{r}|^{\frac{p}{d}-\frac1q} \left\| \phi w \right\|_{L^{p^*}(B_{r})}^p \no\\
&\leq C(d,p) \|V\|_{L^{q}(B_r)} \int_{B_r} |\nabla(\phi w)|^p\dx \no\\
&\leq C(d,p) \|V\|_{L^{q}(B_r)} \int_{B_r} |\nabla w|^p\phi ^p\dx+C(d,p) \|V\|_{L^{q}(B_r)} \int_{B_r} w^p|\nabla\phi |^p\dx.
\end{align}
We now choose $\zeta_1=\zeta_1(d,p,m)>0$ in \eqref{I4} so small such that
$$
C(d,p) \zeta_1 \leq \min\left\{\frac{m-1}{4}\left(\frac{p}{p-m}\right)^p, \frac{1}{(m-1)^{\frac{1}{p-1}}}\right\}.
$$
Thus, whenever $\|V\|_{L^q(\Omega)}\leq \zeta_1$,
\begin{align}{\label{I44}}
I_4 
& \leq\frac{m-1}{4}\left(\frac{p}{p-m}\right)^p \int_{B_r}\left|\nabla\left(h^{\frac{p-m}{p}}\right)\right|^p \phi^p \dx+\frac{C}{(m-1)^{\frac{1}{p-1}}} \int_{B_r}|\nabla \phi|^p h^{p-m} \dx.
\end{align}
By applying \eqref{I11}, \eqref{I2} and \eqref{I44} in \eqref{1}, we conclude the proof.
\end{proof}

In the following lemma, we obtain a logarithmic energy estimate for weak supersolution.

\begin{lemma}\label{log-estimate} 
Assume that $u$ is a weak supersolution to \eqref{eqn-scaled} satisfying $u \geq 0$ in $B_R(x_0) \subset \Omega$. Let $B_r (x_0)\subset B_{\frac{R}{2}}(x_0)$ and $t>0$. Then there exists $C=C(d, p, s)$ such that the following holds:
\begin{align*}
&\int_{B_r(x_0)}|\nabla \log (u+t)|^p \dx +\theta\iint_{B_r(x_0) \times B_r(x_0)}\left|\log \left(\frac{u(x)+t}{u(y)+t}\right)\right|^p \dm  \nonumber\\&\leq C r^d\left(r^{-p}+\theta r^{-p s}+\theta t^{1-p} R^{-p} \operatorname{Tail}\left(u^{-} ; x_0, R\right)^{p-1}\right).
\end{align*}
\end{lemma}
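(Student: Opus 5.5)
Following \cite[Lemma 1.3]{DKP2016} and \cite[Lemma 5.1]{KuusiHarnack}, I would test the supersolution inequality with $v=h^{1-p}\phi^p$, where $h=u+t$ and $\phi\in\C_c^\infty(B_{3r/2})$ satisfies $0\le\phi\le1$, $\phi\equiv1$ on $B_r$ and $|\nabla\phi|\le C/r$. Since $B_{3r/2}\subset B_{3R/4}\subset B_R$, we have $u\ge0$ and hence $h\ge t$ on $\supp\phi$, so $v$ is an admissible nonnegative test function. This is the case $m=p$ of the computation in Lemma \ref{3.3}. As before, the weak formulation gives
\[
0\le I_1+\theta I_2+2\theta I_3+I_4 .
\]
Here $I_1,I_2,I_3$ are the local term, the nonlocal term on $B_{3r/2}\times B_{3r/2}$ and the nonlocal term on $(\rd\setminus B_{3r/2})\times B_{3r/2}$, and $I_4=\int |V|\,u^{p-1}h^{1-p}\phi^p\dx$.

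\textbf{Local term.} Since $\nabla(h^{1-p}\phi^p)=(1-p)h^{-p}\phi^p\nabla h+p\,h^{1-p}\phi^{p-1}\nabla\phi$, Young's inequality gives
\[
I_1\le -\tfrac{p-1}{2}\int\phi^p|\nabla\log h|^p\dx+C(p)\int|\nabla\phi|^p\dx .
\]
The last integral is bounded by $C r^{d-p}$.

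\textbf{Nonlocal terms.} For $I_2$ and $I_3$ I would quote the estimates of \cite[Lemma 1.3]{DKP2016} verbatim. The double integral over $B_{3r/2}\times B_{3r/2}$ yields
\[
-\frac1C\iint_{B_r\times B_r}\left|\log\frac{h(x)}{h(y)}\right|^p\dm+C\iint|\phi(x)-\phi(y)|^p\dm .
\]
The last term is at most $Cr^{d-ps}$. For the tail part, split $\rd\setminus B_{3r/2}$ into $B_R\setminus B_{3r/2}$ and $\rd\setminus B_R$. On $B_R\setminus B_{3r/2}$ we have $u\ge0$, so $(h(x)-h(y))_+^{p-1}h(x)^{1-p}\le1$ and the contribution is at most $Cr^{d-ps}$. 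On $\rd\setminus B_R$ we use $|y-x|\ge c|y-x_0|$, which gives at most
\[
C t^{1-p} r^d\int_{\rd\setminus B_R}\frac{(u^-(y))^{p-1}}{|y-x_0|^{d+ps}}\dy=Ct^{1-p}r^dR^{-p}\Tail(u^-;x_0,R)^{p-1},
\]
with the normalisation $\beta=p$ of the tail \eqref{mtail}. These are exactly the three terms on the right-hand side.

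\textbf{Main obstacle: the term $I_4$.} Since $u^{p-1}h^{1-p}\le1$, we get $I_4\le\int_{B_{3r/2}}|V|\phi^p\dx$. Unlike Lemma \ref{3.3}, this cannot be absorbed into $\int|\nabla\log h|^p\phi^p$, because the weight $h^{p-m}$ degenerates to $h^0$ when $m=p$. Instead I would bound it directly by Hölder's inequality:
\[
I_4\le\|V\|_{L^q(\Omega)}|B_{3r/2}|^{1-\frac1q}\le C\zeta\, r^{d-\frac dq}.
\]
Since $q>d/p$, we have $d-\frac dq>d-p$. Because $r<R\le1$ after scaling (Remark \ref{scale}), we get $r^{d-d/q}\le r^{d-p}$, so $I_4\le Cr^{d-p}$ when $\|V\|_{L^q}$ is small. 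This is where the subcriticality \eqref{weight} and the smallness of $\|V\|_{L^q}$ enter. If $r\le1$ is not available, I would instead test with $\phi^p$ times $\min\{\cdot\}$ and use the Sobolev absorption of \eqref{I4} applied to $\phi$ alone. I expect this to be the delicate step.

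Collecting the estimates, moving the negative terms to the left, and using $\phi\equiv1$ on $B_r$ gives the claimed inequality with $C=C(d,p,s)$.
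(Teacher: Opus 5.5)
Your proposal is correct and follows the paper's proof essentially step for step: the same test function $(u+t)^{1-p}\phi^p$, the same Young-inequality bound on the local term, the nonlocal and tail terms taken from the Di Castro--Kuusi--Palatucci logarithmic lemma, and a direct, non-absorbed bound on the potential term. Your plain H\"older bound $\int_{B_{3r/2}}|V|\le\|V\|_{L^q}|B_{3r/2}|^{1-1/q}\sim r^{d-d/q}$ is the same quantity the paper reaches via H\"older plus Sobolev applied to $\phi$, so your Sobolev fallback would not remove the need for $r$ to be bounded. In fact both arguments require $r$ bounded (the paper silently drops the factor $|B_{3r/2}|^{p/d-1/q}$). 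This holds in every application of the lemma, since $r\le 1$ there, but it does not come from Remark \ref{scale} making $R\le1$.
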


\begin{proof}Consider a cut-off function $\phi \in \C_c^{\infty}(B_{\frac{3 r}{2}})$ satisfying $\phi =
1$ on $B_r, 0 \leq \phi \leq 1$ and $|\nabla \phi| \leq \frac{C}{r}$ on $B_{\frac{3 r}{2}}$. By choosing $v=(u+t)^{1-p} \phi^p \in W_0^{1, p}(\Omega)$ as a nonnegative test function we obtain
\begin{align}{\label{log}}
0 &\leq \int_{B_{2 r}}|\nabla u|^{p-2} \nabla u \cdot \nabla\left(\frac{\phi^p}{(u+t)^{p-1}}\right) \mathrm{d} x+\theta\iint_{\mathbb{R}^d \times \mathbb{R}^d}|u(x)-u(y)|^{p-2}(u(x)-u(y)) \nonumber\\
&\quad\times\left(\frac{\phi(x)^p}{(u(x)+t)^{p-1}}-\frac{\phi(y)^p}{(u(y)+t)^{p-1}}\right) \mathrm{d} \mu+\int_{B_{2 r}} |V| \frac{u^{p-1} \phi^p}{(u+t)^{p-1}} \dx=:I_1+\theta I_s+I_2.
\end{align}
Let $\varepsilon \in(0,1)$ be given. Applying the Young's inequality, we estimate $I_1$ as follows:
\begin{align}{\label{I_1}}
I_1 & =\int_{B_{2 r}}|\nabla u|^{p-2} \nabla u \cdot\left((1-p) \frac{\phi^p \nabla u}{(u+t)^p}+p \frac{\phi^{p-1} \nabla \phi}{(u+t)^{p-1}}\right) \mathrm{d} x \nonumber\\
& \leq(1-p+\varepsilon) \int_{B_{2 r}} \frac{|\nabla u|^p \phi^p}{(u+t)^p} \dx+O\left(\frac{1}{\varepsilon}\right) C(p) \int_{B_{\frac{3 r}{2}}}|\nabla \phi|^p \dx.
\end{align}
By noticing the following that
\begin{align*}
|\nabla \log (u+t)|^p=\frac{|\nabla u|^p}{(u+t)^p}, \text { and } \int_{B_{\frac{3 r}{2}}}|\nabla \phi|^p \dx=C(d) \int_r^{\frac{3 r}{2}} \tau^{d-1-p} \mathrm{~d} \tau=C(d, p) r^{d-p}
\end{align*}
and using \eqref{I_1}, we obtain
\begin{align}{\label{log1}}
    I_1 \leq(1-p+\varepsilon) \int_{B_{2 r}}|\nabla \log (u+t)|^p \phi^p \dx+O\left(\frac{1}{\varepsilon}\right) C(d, p) r^{d-p}.
\end{align}
Now we estimate $I_2$ as follows:
\begin{align}{\label{log2}}
    I_2=\int_{B_{\frac{3 r}{2}}} |V| \frac{u^{p-1} \phi^p}{(u+t)^{p-1}} \dx& \leq \int_{B_{\frac{3 r}{2}}}|V| \phi^p \dx  \leq\left\|V\right\|_{L^{q}(\Omega)} |B_{\frac{3r}{2}}|^{\frac{p}{d}-\frac1q} \left\| \phi \right\|_{L^{p^*}(B_{\frac{3r}{2}})}^p \no \\
    &\leq C(d,p) \|V\|_{L^{q}(\Omega)}\int_{B_{\frac{3 r}{2}}}|\nabla \phi|^p \dx\leq C \int_r^{\frac{3 r}{2}} \tau^{d-1-p} \mathrm{~d} \tau=C r^{d-p} .
\end{align}
Therefore, for $\varepsilon=\frac{p-1}{2}$, using \eqref{log1}, \eqref{log2} we get
\begin{align}{\label{log3}}
    I_1+I_2 \leq-C(p) \int_{B_{2 r}}|\nabla \log (u+t)|^p \phi^p \dx+C(d, p) r^{d-p}.
\end{align}
Further, following \cite[Lemma 1.3]{KuusiHarnack}, for some $C=C(d, p, s)$ we obtain
\begin{align}{\label{log4}}
    I_s \leq-\frac{1}{C} \iint_{B_{2 r} \times B_{2 r}}\left|\log \left(\frac{u(x)+t}{u(y)+t}\right)\right|^p \phi(y)^p \mathrm{~d} \mu+C r^{d-p s}+C \frac{r^d}{t^{p-1} R^p} \operatorname{Tail}\left(u^{-} ; x_0, R\right)^{p-1}.
\end{align}Therefore, the conclusion follows by combining \eqref{log}, \eqref{log3} and \eqref{log4}.
\end{proof}

\section{Local boundedness and Tail estimates}{\label{localbdd}}

In this section, we establish that the weak solution to \eqref{eqn-scaled} is locally bounded. We then prove the Tail estimate for the weak solution. The following lemma is required for local boundedness. For the proof, see \cite[Lemma 4.1]{Dibe}.

\begin{lemma}[Iteration Lemma]\label{iteration}
Let $(Y_j)_{j=0}^{\infty}$ be a sequence of positive real numbers such that
\begin{align*}
    Y_0\leq c_{0}^{-\frac{1}{\beta}}b^{-\frac{1}{\beta^2}} \text{ and } Y_{j+1}\leq c_0 b^{j} Y_j^{1+\beta},
\end{align*}
$j=0,1,2,\dots$, for some constants $c_0,b>1$ and $\beta>0$. Then $Y_j \ra 0$ as $j \ra \infty$.
\end{lemma}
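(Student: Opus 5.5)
The plan is to prove Lemma~\ref{iteration} by showing inductively that $Y_j$ decays geometrically. Concretely, we aim for
\[
Y_j \le b^{-\frac{j}{\beta}}\,Y_0 \qquad \text{for all } j \ge 0 .
\]
Since $b>1$ and $\beta>0$, this forces $Y_j \to 0$.

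The base case $j=0$ is trivial. For the inductive step, assume $Y_j \le b^{-j/\beta} Y_0$. The recursion gives
\[
Y_{j+1} \le c_0\, b^{j}\, Y_j^{1+\beta} \le c_0\, b^{j}\, b^{-\frac{j(1+\beta)}{\beta}}\, Y_0^{1+\beta} = c_0\, b^{-\frac{j}{\beta}}\, Y_0^{\beta}\, Y_0 .
\]
It therefore suffices to check
\[
c_0\, b^{-\frac{j}{\beta}}\, Y_0^{\beta} \le b^{-\frac{j+1}{\beta}}, \qquad\text{i.e.}\qquad c_0\, Y_0^{\beta} \le b^{-\frac1\beta} .
\]
This is exactly the $\beta$-th power of the hypothesis $Y_0 \le c_0^{-1/\beta} b^{-1/\beta^2}$, since raising it to the power $\beta$ yields $Y_0^\beta \le c_0^{-1} b^{-1/\beta}$. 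This closes the induction.

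The only point needing care is the choice of the decay rate $b^{-1/\beta}$. It is the unique rate for which the factor $b^{j}$ coming from the recursion is exactly absorbed by the extra power $Y_j^{\beta}$, so that the smallness condition on $Y_0$ is independent of $j$. I expect no real obstacle beyond this bookkeeping of exponents. Positivity of the $Y_j$ is used only to make the powers $Y_j^{1+\beta}$ well defined and monotone.
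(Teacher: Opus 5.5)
Your proof is correct. The induction $Y_j \le b^{-j/\beta}Y_0$ closes because the hypothesis, raised to the power $\beta$, gives exactly $c_0 Y_0^{\beta}\le b^{-1/\beta}$. The paper does not prove this lemma but cites \cite[Lemma 4.1]{Dibe}, whose standard proof is this same geometric-decay induction, so your argument is essentially the same approach.
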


\begin{proposition}[Local Boundedness]\label{local-boundedness-I}
Assume that $u$ is a weak subsolution to \eqref{eqn-scaled} satisfying $u \geq 0$ in $B_R(x_0) \subset \Omega$. Let $0<r\leq1$ be such that $B_r(x_0) \subset B_R(x_0)$. Then there exists $\zeta_2(d,p)>0$ such that for $\|V\|_{L^q(\Omega)} \le \zeta_2$, 
\begin{equation}\label{loc.bounded}
    \underset{B_{\frac{r}{2}}(x_0)}{\esssup}\,u\leq \delta \theta^{\frac{1}{p-1}} \text{\rm Tail}(u^+;x_0,\frac r2)+C(d,s,p)\delta^{\frac{(1-p)p^*}{p(p^*-p\sig)}}\left(\fint_{B_r(x_0)}u^{p\sig}\dx\right)^{\frac1{p\sig}},
\end{equation}
where $\delta\in(0,1], \sig=\frac{(dp-d+p)q-d}{(dp-d+p)q-dp}$ and $p\sigma<p^*$.
\end{proposition}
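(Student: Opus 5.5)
We follow the De Giorgi scheme of \cite{KuusiHarnack,garainjuha}, with the new term $\int|V||u|^{p-1}w\phi^p$ from Lemma \ref{energy estimate} handled by H\"older and Sobolev. That is where the exponent $\sigma$ enters.

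\textbf{Setup.} For $j\ge0$ set
\[
r_j=\tfrac r2(1+2^{-j}),\qquad \tilde r_j=\tfrac{r_j+r_{j+1}}2,\qquad B_j=B_{r_j},\qquad \tilde B_j=B_{\tilde r_j}.
\]
Fix a level $k_0>0$ to be chosen, and put
\[
k_j=k_0(1-2^{-j}),\qquad \tilde k_j=\tfrac{k_j+k_{j+1}}2,\qquad w_j=(u-k_j)^+,\qquad \tilde w_j=(u-\tilde k_j)^+ .
\]
Take cut-offs $\phi_j\in\C_c^\infty(\tilde B_j)$ with $\phi_j=1$ on $B_{j+1}$ and $|\nabla\phi_j|\le C2^j/r$. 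The quantity to iterate is
\[
Y_j=\Big(\fint_{B_j}w_j^{p\sigma}\dx\Big)^{\frac1{p\sigma}}.
\]

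\textbf{Energy step.} Apply Lemma \ref{energy estimate} with $w=\tilde w_j$ on the ball $B_j$.

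The local gradient term and the nonlocal $\max\{w(x),w(y)\}^p|\phi(x)-\phi(y)|^p$ term are bounded by $C2^{jp}r^{-p}\int_{B_j}w_j^p$. Here $\theta\le1$ and $r\le1$ give $r^{-sp}\le r^{-p}$.

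The tail term is bounded using $|x-y|\ge 2^{-j-2}|y-x_0|$ for $x\in\supp\phi_j$, $y\notin B_j$. This gives
\[
\theta\, C2^{j(d+sp)}r^{-p}\,\Tail(u^+;x_0,r/2)^{p-1}\,k_0^{1-p}\ldots
\]
after using $\int\tilde w_j\le \int w_j^p/(\tilde k_j-k_j)^{p-1}$.

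For the $V$ term, use $u\le w_j+k_j\le C2^j w_j$ on $\{u>\tilde k_j\}$ (since $w_j\ge k_0 2^{-j-2}$ there). This gives $|u|^{p-1}\tilde w_j\le C2^{jp}w_j^p$ on the support. Then H\"older with exponents $q$, $\sigma$ and $\big(1-\tfrac1q-\tfrac1\sigma\big)^{-1}$ should produce a bound of the form $C2^{jp}\|V\|_{L^q}\|w_j\|^p_{L^{p\sigma}}|\{u>\tilde k_j\}\cap B_j|^{\gamma}$. The precise value of $\sigma$ is what makes the exponents close, matching the Sobolev exponent $p^*$ below so that the measure exponent is positive.

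\textbf{Sobolev and measure step.} Apply Lemma \ref{c.omega_sobo} to $\tilde w_j\phi_j$. Since $p\sigma<p^*$, H\"older gives
\[
\fint_{B_{j+1}}w_{j+1}^{p\sigma}\le\Big(\fint (\tilde w_j\phi_j)^{p^*}\Big)^{\frac{p\sigma}{p^*}}\Big(\frac{|\{u>\tilde k_j\}\cap B_j|}{|B_j|}\Big)^{1-\frac{p\sigma}{p^*}} .
\]
Chebyshev gives
\[
\frac{|\{u>\tilde k_j\}\cap B_j|}{|B_j|}\le \Big(\frac{2^{j+2}}{k_0}\Big)^{p\sigma}Y_j^{p\sigma}.
\]
Replace the $\int w_j^p$ terms by $Y_j^p$ via H\"older, since $\sigma>1$.

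\textbf{Absorbing the $V$ term.} This term has a coefficient $\|V\|_{L^q}$ times a Sobolev constant. I would absorb it into the left-hand gradient term, which is valid once $\|V\|_{L^q}\le\zeta_2(d,p)$. This is the same mechanism as in \eqref{I4}, applied to $\tilde w_j\phi_j$ after bounding $|u|^{p-1}\le C2^{jp}\tilde w_j^{p-1}$ on the smaller set $\{u>k_{j+1}\}$. Using that level shift is the cleaner route.

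\textbf{Recursion and iteration.} Collecting the steps yields
\[
\frac{Y_{j+1}}{k_0}\le C\,b^j\Big(1+\frac{\theta\,\Tail(u^+;x_0,r/2)^{p-1}}{k_0^{p-1}}\Big)^{\frac1p}\Big(\frac{Y_j}{k_0}\Big)^{1+\beta},\qquad \beta=\frac{p^*-p\sigma}{p^*}\ \text{(up to normalization)}.
\]
Choose
\[
k_0=\delta\,\theta^{\frac1{p-1}}\Tail(u^+;x_0,r/2)+C\,\delta^{-\frac{(p-1)p^*}{p(p^*-p\sigma)}}Y_0 .
\]
The first summand makes the bracket at most $C\delta^{1-p}$. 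Lemma \ref{iteration}, applied to $Y_j/k_0$, then requires $Y_0/k_0\le c\,\delta^{\frac{(p-1)}{p\beta}}$, and the second summand guarantees this. Hence $Y_j\to0$, so $u\le k_0$ a.e. on $B_{r/2}$, which is \eqref{loc.bounded}.

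\textbf{Main obstacle.} The delicate step is the $V$ term: its exponent bookkeeping must be compatible with $p\sigma<p^*$ and $\sigma>1$. Both hold exactly because $q>d/p$, and this is where the formula for $\sigma$ is pinned down.
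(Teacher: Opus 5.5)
The overall structure of your argument agrees with the paper: the radii and levels, the Chebyshev-plus-Sobolev step, the choice of $k_0$ and the resulting power of $\delta$. However, the one genuinely new step, the $V$-term, fails for the $\sigma$ in the statement.

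\textbf{The H\"older step.} Your H\"older triple $q,\ \sigma,\ (1-\frac1q-\frac1\sigma)^{-1}$ is not admissible. For $\sigma=\frac{(dp-d+p)q-d}{(dp-d+p)q-dp}$ one computes
\[
1-\frac1q-\frac1\sigma=\frac{d-pq}{q\big((dp-d+p)q-d\big)}<0,
\]
and this is negative precisely because $q>d/p$. For instance, $d=3$, $p=q=2$ gives $\sigma=\frac74<2=q'$. So $\int|V|w_j^p$ is not bounded by $\|V\|_{L^q}\|w_j\|_{L^{p\sigma}}^p$, and your claim that the precise value of $\sigma$ makes the exponents close is false. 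Your route only works for $\sigma\in[q',\frac{p^*}{p})$. That yields a strictly weaker estimate, with a higher norm of $u$ on the right, not the stated one.

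\textbf{The absorption step.} Your fallback route also fails. Bounding $|u|^{p-1}\le C2^{j(p-1)}\tilde w_j^{p-1}$ on $\{u>k_{j+1}\}$ puts a factor $2^{j(p-1)}$ in front of $\|V\|_{L^q}\|\nabla(\tilde w_j\phi_j)\|_{L^p}^p$. Absorbing it would then require $\|V\|_{L^q}\le c\,2^{-j(p-1)}$ for every $j$, which no fixed $\zeta_2$ provides. In addition, the layer $\{\tilde k_j<u\le k_{j+1}\}$, where that pointwise bound fails, is left untreated.

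\textbf{The missing idea.} The paper splits $u^{p-1}\le C(p)\big(\tilde w_j^{p-1}+\tilde k_j^{p-1}\big)$ on $\{u>\tilde k_j\}$.
The first piece satisfies $\int|V|\tilde w_j^p\phi_j^p\le C\|V\|_{L^q}\|\tilde w_j\phi_j\|_{L^{p^*}}^p$. It is absorbed through Sobolev with a $j$-independent constant, which is what the condition $\|V\|_{L^q}\le\zeta_2$ is for.
The second piece is $\tilde k_j^{p-1}\|V\|_{L^q}\|\tilde w_j\phi_j\|_{L^{q'}}$. Interpolate $L^{q'}$ between $L^{p^*}$ and $L^1$ with $\alpha=\frac{p^*}{(p^*-1)q}<1$, then apply Young with exponents $(\frac p\alpha,\frac p{p-\alpha})$. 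The $L^{p^*}$ part is absorbed again. What remains is $\tilde k_j^{\,p-\frac1\sigma}\|\tilde w_j\phi_j\|_{L^1}^{\frac1\sigma}$ with $\sigma=\frac{p-\alpha}{p(1-\alpha)}$, which is exactly the stated $\sigma$.
Next, $\tilde w_j\le(\tilde k_j-k_j)^{1-p\sigma}w_j^{p\sigma}$ turns this into $C2^{j(p-\frac1\sigma)}r^{\frac d\sigma}Y_j^p$. After the Sobolev factor $r^{p-d}$, the prefactor is $r^{p-d+\frac d\sigma}\le1$, because $p\sigma<p^*$ and $r\le1$.
With the $V$-term handled this way, the rest of your iteration goes through.
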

\begin{proof}
For $r\in(0,1)$ and $j=0,1,2, \cdots$, define $$r_j=\frac{r}{2}(1+2^{-j}),\,\overline{r}_j=\frac{r_j+r_{j+1}}2,\,B_j=B_{r_j}(x_0),\, \text{ and } \,\overline{B}_j=B_{\overline{r}_j}(x_0).$$ Let $\{\phi_j\}_{j=1}^\infty\subset \C_c^{\infty}(\overline{B}_j)$ be a nonnegative sequence of cut-off functions such that $$0\leq \phi_j\leq 1\text{ in }\overline{B}_j,\,\phi_j\equiv1\text{ on }B_{j+1},\, |\nabla\phi_j|\leq \frac{2^{j+3}}{r}.$$
For $k,\overline{k}\geq0$, we denote
$$k_j=k+(1-2^{-j})\overline{k},\,\overline{k}_j=\frac{k_j+k_{j+1}}{2},\, w_j=(u-k_j)_+,\, \text{ and } \,\overline{w}_j=(u-\overline{k}_j)_+.$$
Using the energy estimate (Lemma \ref{energy estimate}), we obtain
\begin{equation}\label{caccioppoli-1}
\begin{aligned}
    &\int_{B_j}\phi_j^p|\nabla \overline{w}_j|^p\dx+\theta\iint_{B_j^2}|\overline{w}_j(x)\phi_j(x)-\overline{w}_j(y)\phi_j(y)|^p\dm\\
    &\leq C(p) \bigg(\int_{B_j}\overline{w}_j^p|\nabla\phi_j|^p\dx+\theta\iint_{B_j^2}\max\{\overline{w}_j(x),\overline{w}_j(y)\}^p \left| \phi_j(x)-\phi_j(y) \right|^p\dm\\
    &\quad\qquad+\theta \,\underset{x\in\supp\phi}{\esssup}\,\int_{\rd\setminus B_j}\frac{\overline{w}_j(y)^{p-1}}{|x-y|^{d+sp}}\dy\cdot\int_{B_j}\overline{w}_j\phi_j^p\dx\bigg)+\int_{B_j}V(x)u^{p-1}\overline{w}_j\phi_j^p\dx \bigg).
\end{aligned}
\end{equation}
Applying H\"{o}lder's inequality,
\begin{align}\label{lb-1}
    &\int_{B_j} V(x)u^{p-1}\overline{w}_j\phi_j^p\dx \le C(p) \left(\int_{B_j}\abs{V(x)}\overline{w}_j^p \phi_j^p \dx+\overline{k}_j^{p-1} \int_{B_j} |V(x)| \overline{w}_j\phi_j^p \dx \right) \no \\
    &\leq C(p) \left\|V\right\|_{L^{q}(\Omega)} |B_j|^{\frac{p}{d}-\frac1q} \left\| \overline{w}_j\phi_j \right\|_{L^{p^*}(B_j)}^p + \overline{k}_j^{p-1} \left\|V \phi_j^{p-1} \right\|_{L^{q}(B_j)} \left\|\overline{w}_j\phi_j\right\|_{L^{\frac{q}{q-1}}(B_j)} \no \\
    &\leq C(d,p) \|V\|_{L^{q}(\Omega)} \left(\left\|\overline{w}_j\phi_j\right\|_{L^{p^*}(B_j)}^p+\overline{k}_j^{p-1}\left\|\overline{w}_j\phi_j\right\|_{L^{\frac{q}{q-1}}(B_j)}\right).
\end{align}
Applying the generalized H\"{o}lder's inequality $\frac{1}{p_1} = \frac{\al}{p_2} + \frac{1-\al}{p_3}$ with the conjugate triplet $(p_1,p_2,p_3)$, where 
\begin{align*}
    p_1=\frac{q}{q-1}, p_2= p^*, p_3=1, \text{ and } \al=\frac{p^*}{(p^*-1)q},
\end{align*}
and observing the fact that 
\begin{align*}
    q>\frac{d}{p} \Longrightarrow \frac{q}{q-1} < \frac{d}{d-p} <p^* \Longrightarrow q > \frac{p^*}{p^*-1} \Longrightarrow \al < 1,
\end{align*}
we now estimate
\begin{align*}
    \left\|\overline{w}_j\phi_j \right\|_{L^{\frac{q}{q-1}}(B_j)}  \le \left\| \overline{w}_j \phi_j \right\|_{L^{p^*}(B_j)}^{\al} \left\|\overline{w}_j\phi_j\right\|_{L^1(B_j)}^{1-\al}.
\end{align*}
Now, applying the Young's inequality with coefficients $(\frac{p}{\al},\frac{p}{p-\al})$, we obtain 
\begin{align*}
\overline{k}_j^{p-1} \left\|\overline{w}_j\phi_j\right\|_{L^{p^*}(B_j)}^\al\,\left\|\overline{w}_j\phi_j\right\|_{L^{1}(B_j)}^{1-\al} \leq \|\overline{w}_j\phi_j\|_{L^{p^*}(B_j)}^p+\overline{k}_j^{\frac{p(p-1)}{p-\al}}\|\overline{w}_j\phi_j\|_{L^{1}(B_j)}^{\frac{p(1-\al)}{p-\al}}.
\end{align*}
Using the Sobolev embedding $W^{1,p}_0(B_j) \hookrightarrow L^{p^*}(B_j)$ and the fact that $\overline{w}_j\phi_j \in W^{1,p}_0(B_j)$, we get 
\begin{align*}
    \|\overline{w}_j\phi_j\|_{L^{p^*}(B_j)}^p \le C(d,p) \norm{\Gr (\overline{w}_j\phi_j)}_{L^{p}(B_j)}^p \le C(d,p) \left( \int_{B_j}\phi_j^p|\nabla \overline{w}_j|^p \dx + \int_{B_j}\overline{w}_j^p|\nabla\phi_j|^p\dx \right). 
\end{align*}
Therefore, noting \eqref{lb-1} and choosing $\zeta_2 =  \zeta_2(d,p) >0$ small enough, we get from \eqref{caccioppoli-1},
\begin{equation}\label{caccioppoli-2}
\begin{aligned}
    &\int_{B_j}\phi_j^p|\nabla \overline{w}_j|^p \dx+\theta\iint_{B_j^2}|\overline{w}_j(x)\phi_j(x)-\overline{w}_j(y)\phi_j(y)|^p\dm\\
    &\leq C\bigg(\int_{B_j}\overline{w}_j^p|\nabla\phi_j|^p\dx+\theta\iint_{B_j^2}\max\{\overline{w}_j(x),\overline{w}_j(y)\}^p|\phi_j(x)-\phi_j(y)|^p\dm\\
&\quad\qquad+ \theta \, \underset{x\in\supp\phi}{\esssup}\,\int_{\rd\setminus B_j}\frac{\overline{w}_j(y)^{p-1}}{|x-y|^{d+sp}}\dy\cdot\int_{B_j}\overline{w}_j\phi_j^p\dx+\overline{k}_j^{\frac{p(p-1)}{p-\al}}\|\overline{w}_j\phi_j\|_{L^{1}(B_j)}^{\frac{p(1-\al)}{p-\al}}\bigg).
\end{aligned}
\end{equation}
Again, using the Sobolev inequality, \eqref{caccioppoli-2} yields
\begin{align}\label{caccioppoli-3}
   &\left(\fint_{B_j}|\overline{w}_j \phi_j|^{p^*}\dx\right)^{\frac{p}{p^*}}\leq Cr^{p-d}\|\nabla(\overline{w}_j\phi_j)\|_{L^p(B_j)}^p\leq Cr^{p-d}\left(\int_{B_j}\phi_j^p|\nabla \overline{w}_j|^p\dx+\int_{B_j}\overline{w}_j^p|\nabla\phi_j|^p\dx\right) \no \\
    &\leq Cr^{p-d}\bigg(\int_{B_j}\overline{w}_j^p|\nabla\phi_j|^p\dx+\theta\iint_{B_j^2}\max\{\overline{w}_j(x),\overline{w}_j(y)\}^p|\phi_j(x)-\phi_j(y)|^p\dm \no \\
    &+\theta \,\underset{x\in\supp\phi}{\esssup}\,\int_{\rd\setminus B_j}\frac{\overline{w}_j(y)^{p-1}}{|x-y|^{d+sp}}\dy\cdot\int_{B_j}\overline{w}_j\phi_j^p\dx+\overline{k}_j^{\frac{p(p-1)}{p-\al}}\|\overline{w}_j\phi_j\|_{L^1(B_j)}^{\frac{p(1-\al)}{p-\al}}\bigg),
\end{align}
where $C=C(d,p,s)$. Using $|\nabla\phi_j|\leq \frac{2^{j+3}}{r}$, observe that 
\begin{align*}
    r^{p-d} \int_{B_j}\overline{w}_j^p|\nabla\phi_j|^p\dx \le C(d,p) 2^{jp} \fint_{B_j} {w}_j^p \dx.
\end{align*}
Further, we estimate
\begin{align*}
    &\theta r^p \fint_{B_j} \int_{B_j}\max\{\overline{w}_j(x),\overline{w}_j(y)\}^p|\phi_j(x)-\phi_j(y)|^p\dm \le C\theta 2^{jp}  \fint_{B_j} w_j^p(y) \left( \int_{B_j} \frac{\dx}{\abs{x-y}^{d+ps-p}} \right) \dy \\
    & \le \frac{C\theta 
2^{jp}}{p(1-s)} r^{p-ps}\fint_{B_j} w_j^p(x) \dx\le C2^{jp}\fint_{B_j} w_j^p(x) \dx. 
\end{align*}
Proceeding similarly as in the proof of \cite[Page 1292]{DKP2016}, we get 
\begin{align*}
    & \theta r^{p-d} \, \underset{x\in\supp\phi}{\esssup}\,\int_{\rd\setminus B_j}\frac{\overline{w}_j(y)^{p-1}}{|x-y|^{d+sp}}\dy\cdot\int_{B_j}\overline{w}_j\phi_j^p\dx \\
    &\le C(d,s,p) 2^{j(d+sp+p-1)} \theta  \left( \frac{\text{Tail}(u^+;x_0,\frac r2)}{\overline{k}} \right)^{p-1} \fint_{B_j} w_j^p \dx.
\end{align*} 
Also, observe that 
$$\fint_{B_j}w_j^p\dx\leq \left(\fint_{B_j}w_j^{p\sig}\right)^{\frac1\sig}.$$
Next, we estimate the last integral of \eqref{caccioppoli-3}.
Define $\sig:=\frac{p-\al}{p(1-\al)}>1$. Then $p\sig\in(p,p^*)$. Using the relations 
\begin{align*}
    \frac{p(p-1)}{p- \alpha} = p - \frac{1}{\sigma}, \overline{w}_j\leq(\overline{k}_j-k_j)^{1-p\sig}w_j^{p\sig}, \text{ and } \overline{k}_j\leq k+\overline{k},
\end{align*}
we get
\begin{align*}
     \overline{k}_j^{\frac{p(p-1)}{p-\al}}\|\overline{w}_j\phi_j\|_{L^1(B_j)}^{\frac1\sig}\leq\left(\frac{k+\overline{k}}{\overline{k}_j-k_j}\right)^{p-\frac1\sig}\left(\int_{B_j}w_j^{p\sig}\right)^{\frac1\sig} \le \left(2^{j+2} \frac{k+\overline{k}}{\overline{k}}\right)^{p-\frac1\sig} r^{\frac{d}{\sigma}}\left(\fint_{B_j}w_j^{p\sig}\right)^{\frac1\sig}.
\end{align*}
Therefore, from \eqref{caccioppoli-3}, there exists $C=C(d,p,s)$ such that for every large $j$, 
\begin{align*}
\left(\fint_{\overline{B}_j}|\overline{w}_j\phi_j|^{p^*}\right)^{\frac{p}{p^*}}&\leq C \left[2^{jp}+2^{j(d+sp+p-1)} \theta \left( \frac{\text{Tail}(u^+;x_0,\frac r2)}{\overline{k}} \right)^{p-1} +r^{p-d+\frac{d}{\sigma}}\left(2^{j+2}\frac{k+\overline{k}}{\overline{k}}\right)^{p-\frac1\sig}\right] \\
&\quad \left(\fint_{B_j}w_j^{p\sig}\right)^{\frac1\sig} \\
& \le C 2^{j(d+sp+p-1)} \left[\theta \left( \frac{\text{Tail}(u^+;x_0,\frac r2)}{\overline{k}} \right)^{p-1} +r^{p-d+\frac{d}{\sigma}}\left(\frac{k}{\overline{k}}\right)^{p-\frac1\sig}+1\right] \left(\fint_{B_j}w_j^{p\sig}\right)^{\frac1\sig},
\end{align*}
where using the definition of $\sigma$, we observe that $p-d+\frac d\sigma\geq 0$.
For $\delta\in(0,1]$, we let $\overline{k}$ such that 
\begin{align*}
    \theta \left( \frac{\text{Tail}(u^+;x_0,\frac r2)}{\overline{k}} \right)^{p-1} \le \delta^{1-p} \Longleftrightarrow \overline{k} \ge \delta \theta^{\frac{1}{p-1}}\text{Tail}(u^+;x_0,\frac r2),
\end{align*}
and 
\begin{align*}
    r^{p-d+\frac{d}{\sigma}}\left(\frac{k}{\overline{k}}\right)^{p-\frac1\sig} \le \delta^{1-p} \Longleftrightarrow \overline{k} \ge kr^{\left(p-d+\frac{d}{\sigma} \right)\left( \frac{p-\alpha}{p(p-1)}\right)} \delta^{\frac{p-\alpha}{p}}. 
\end{align*}
In view of the above inequalities, we take 
$$\overline{k}\geq \delta \theta^{\frac{1}{p-1}}\text{Tail}(u^+;x_0,\frac r2)+kr^{\left(p-d+\frac{d}{\sigma} \right)\left( \frac{p-\alpha}{p(p-1)}\right)} \delta^{\frac{p-\alpha}{p}},$$
so that 
\begin{align}\label{lb-3}
\left(\fint_{\overline{B}_j}|\overline{w}_j\phi_j|^{p^*}\right)^{\frac{p}{p^*}}&\leq C 2^{j(d+sp+p-1)}\delta^{1-p}\left(\fint_{B_j}w_j^{p\sig}\right)^{\frac1\sig}.
\end{align}
We also have 
\begin{align}\label{lb-4}
\left(\fint_{\overline{B}_j}|\overline{w}_j\phi_j|^{p^*}\right)^{\frac{p}{p^*}}&\geq (k_{j+1}-\overline{k}_j)^{\frac{p(p^*-p\sig)}{p^*}}\left(\fint_{B_{j+1}}w_{j+1}^{p\sig}\right)^{\frac{p}{p^*}} \no \\
&= \left(\frac{\overline{k}}{2^{j+2}}\right)^{\frac{p(p^*-p\sig)}{p^*}}\left(\fint_{B_{j+1}}w_{j+1}^{p\sig}\right)^{\frac{p}{p^*}}.
\end{align}
Denote $Y_j:=\displaystyle \left(\fint_{B_j}w_j^{p\sig}\right)^{\frac1{p\sig}}.$ Then using \eqref{lb-3} and \eqref{lb-4}, there exists $C=C(d,p,s)>1$ such that 
\begin{align*}
    \left(\frac{\overline{k}}{2^{j}}\right)^{\frac{p(p^*-p\sig)}{p^*}}Y_{j+1}^{\frac{p^2\sig}{p^*}}\leq C 2^{j(d+sp+p-1)} \delta^{1-p} Y_j^p,
\end{align*}
which implies
\begin{align*}
    \frac{Y_{j+1}}{\overline{k}}\leq C \delta^{\frac{p^*(1-p)}{p^2 \sigma}} \tilde{C}^j\left(\frac{Y_j}{\overline{k}}\right)^{1+\beta},
\end{align*}
where using the fact $p\sig <p^*$, we see that $\beta:=\frac{p^*}{p\sig}-1>0$ and $\tilde{C}:=2^{\left(\frac{d+sp+p-1}{p}+\frac{p^*-p\sig}{p^*}\right)\frac{p^*}{p\sig}}>1.$
Finally, we choose $$\overline{k}:=\delta \theta^{\frac{1}{p-1}}\text{Tail}(u^+;x_0,\frac r2)+kr^{\left(p-d+\frac{d}{\sigma} \right)\left( \frac{p-\alpha}{p(p-1)}\right)} \delta^{\frac{p-\alpha}{p}}+\delta^{\frac{(1-p)p^*}{p(p^*-p\sig)}}C^{\frac{1}{\be}}\tilde{C}^{\frac{1}{\be^2}}Y_0,$$ so that 
$$\frac{Y_0}{\overline{k}}\leq \delta^{\frac{p^*(p-1)}{p^2\sig\be}}C^{-\frac{1}{\be}} \tilde{C}^{-\frac{1}{\be^2}} = \left( C \delta^{\frac{p^*(1-p)}{p^2\sig}} \right)^{-\frac{1}{\beta}} \tilde{C}^{-\frac{1}{\be^2}}. $$
Now we take $c_0 = C\delta^{\frac{p^*(1-p)}{p^2\sig}}$  and $b=\tilde{C}$ in Lemma \ref{iteration}.
Therefore, applying the iteration lemma (Lemma \ref{iteration}), we get $Y_j\to0\text{ as }j\to\infty.$
Hence
\begin{align*}
    \sup_{B_{\frac{r}{2}}(x_0)}(u-k)^+\leq \overline{k} & \leq\delta \theta^{\frac{1}{p-1}}\text{Tail}(u^+;x_0,\frac r2)+kr^{\left(p-d+\frac{d}{\sigma} \right)\left( \frac{p-\alpha}{p(p-1)}\right)} \delta^{\frac{p-\alpha}{p}} 
    \\
    & +\delta^{\frac{(1-p)p^*}{p(p^*-p\sig)}}C^{\frac{1}{\be}}\tilde{C}^{\frac{1}{\be^2}}\left(\fint_{B_r}u^{p\sig}\right)^{\frac1{p\sig}}.
\end{align*}
Observe that $C^{\frac{1}{\be}}\tilde{C}^{\frac{1}{\be^2}}=C(d,s,p)$. Now, choosing $k=0$ yields \eqref{loc.bounded}.
\end{proof}
In the following lemma, we state the Tail estimate for weak solution to \eqref{eqn-scaled}. 

\begin{lemma}\label{Tail}
Assume that $u$ is a weak solution to \eqref{eqn-scaled} satisfying $u \geq 0$ in $B_R(x_0) \subset \Omega$. Let $0<r\leq1$ be such that $B_r(x_0) \subset B_R(x_0)$.
Then
\begin{equation}\label{tailest}
\Tail(u^{+};x_0,r)\leq C(d,p,s)\left( \theta^{\frac{1}{1-p}} \,\underset{B_{r}(x_0)}{\esssup}\,u+\Big(\frac{r}{R}\Big)^\frac{p}{p-1}\Tail(u^{-};x_0,R)\right).
\end{equation}
\end{lemma}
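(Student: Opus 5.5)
We follow the tail estimate of Di Castro--Kuusi--Palatucci (\cite[Lemma 4.2]{KuusiHarnack}) and its mixed version in \cite{garainjuha}. The idea is to test the equation with a function that isolates the nonlocal interaction with $\rd\setminus B_r$. Set $M:=\esssup_{B_r(x_0)} u$, which is finite by Proposition \ref{local-boundedness-I}. Fix a cut-off $\phi\in\C_c^\infty(B_{r/2})$ with $0\le\phi\le1$, $\phi\equiv1$ on $B_{r/4}$ and $|\nabla\phi|\le C/r$. Take the test function
\[
v:=(u-2M)\phi^p .
\]
Since $u\le M$ on $B_r$, we have $v\le0$, and $v$ is an admissible test function for the weak solution $u$. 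Because equality holds in \eqref{weak1}, we may use $v$ directly, or equivalently $-v\ge 0$. This gives
\[
\int_{B_r}|\nabla u|^{p-2}\nabla u\cdot\nabla v\dx+\theta\A(u,v)=\int_{B_r}V|u|^{p-2}uv\dx .
\]

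\textbf{Step 1: the local and potential terms.} Expanding $\nabla v=\phi^p\nabla u+p(u-2M)\phi^{p-1}\nabla\phi$, Young's inequality gives
\[
\int|\nabla u|^{p-2}\nabla u\cdot\nabla v\dx\ \ge\ -C M^p r^{d-p}.
\]
Here the term $\int\phi^p|\nabla u|^p$ has the good sign, and $|u-2M|\le 2M$ on $B_r$.

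For the potential term, we bound $|u|^{p-1}|v|\le 2M^p\phi^p$ on $B_{r/2}$. Then Hölder's inequality, the Sobolev inequality for $\phi$ (as in \eqref{log2}), and the smallness of $\|V\|_{L^q}$ give
\[
\Bigl|\int V|u|^{p-2}uv\dx\Bigr|\le CM^p\|V\|_{L^q}\int|\nabla\phi|^p\dx\le CM^pr^{d-p}.
\]

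\textbf{Step 2: the nonlocal term.} We split $\A(u,v)$ into the part over $B_r\times B_r$ and twice the part over $B_r\times(\rd\setminus B_r)$. The $B_r\times B_r$ part is handled exactly as in \cite{KuusiHarnack}. We use $|u(x)-u(y)|\le M$ together with $|\phi^p(x)-\phi^p(y)|\le Cr^{-1}|x-y|$, and the resulting integral converges since $s<1$. This bounds the part from below by $-CM^pr^{d-sp}$.

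For the off-diagonal part, take $x\in B_{r/2}$ and $y\notin B_r$. Then $-(u(x)-2M)\ge M$, and
\[
|u(x)-u(y)|^{p-2}(u(x)-u(y))(2M-u(x))\ \ge\ c\,(u(y)^+)^{p-1}M - C M^{p} - C M (u(y)^-)^{p-1}.
\]
This needs to be checked by splitting into the cases $u(y)\ge 2M$ and $u(y)<2M$. Since $|x-y|\simeq|y-x_0|$, integrating over $x\in B_{r/4}$ and over $y$ yields a lower bound of the form
\[
c\,M r^{d-sp}\,\Tail(u^+;x_0,r)^{p-1}-CM^pr^{d-sp}-CMr^d\int_{\rd\setminus B_r}\frac{(u^-)^{p-1}}{|y-x_0|^{d+sp}}\dy .
\]
Because $u\ge0$ in $B_R$, the last integral only runs over $\rd\setminus B_R$. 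It therefore equals $R^{-sp}\Tail(u^-;x_0,R)^{p-1}$, up to the scaling normalization in \eqref{mtail}.

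\textbf{Step 3: conclusion.} Combining the steps and dividing by $\theta M r^{d-sp}$ gives
\[
\Tail(u^+;x_0,r)^{p-1}\le C\bigl(\theta^{-1}r^{sp-p}+1\bigr)M^{p-1}+C\bigl(\tfrac rR\bigr)^{p}\Tail(u^-;x_0,R)^{p-1},
\]
using the normalization $\Tail_{p-1,sp,p}$ with the weight $R^p$. Since $r\le1$ and $\theta\le1$, the first coefficient is at most $C\theta^{-1}$. Taking $(p-1)$-th roots gives \eqref{tailest}, with the factor $\theta^{1/(1-p)}$ as stated. The same argument also goes through if the tail is only controlled up to this normalization.

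\textbf{Main obstacle.} The delicate step is the pointwise lower bound in Step 2, where we must extract a positive multiple of $(u(y)^+)^{p-1}$ while controlling the error terms uniformly in $p\in(1,\infty)$. The case $1<p<2$ needs the elementary inequality $(a-b)^{p-1}\ge 2^{-\max(0,p-2)}a^{p-1}-b^{p-1}$ in a suitable form. A second point is bookkeeping the powers of $r$: the weight $R^p$ in the tail, rather than $R^{sp}$, must produce exactly the factor $(r/R)^{p/(p-1)}$. This works because $u^-$ vanishes inside $B_R$.
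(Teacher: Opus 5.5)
Your overall route is the paper's: test with $(u-2M)\phi^p$, bound the local and potential terms by $CM^pr^{d-p}$, and treat the nonlocal part as in (4.9) and (4.11) of \cite{KuusiHarnack}. Two steps fail as you have written them.

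\textbf{The diagonal term.} On $B_r\times B_r$ you bound $|A_u(x,y)|\le M^{p-1}$ and $|\phi^p(x)-\phi^p(y)|\le Cr^{-1}|x-y|$. This leaves the kernel $|x-y|^{1-d-sp}$, which is integrable only if $sp<1$, not merely $s<1$. For $sp\ge 1$ the bound is infinite. You must keep the nonnegative piece $|u(x)-u(y)|^p\frac{\phi(x)^p+\phi(y)^p}{2}$ of $A_u(x,y)(v(x)-v(y))$ and absorb the cross term into it by Young's inequality. What remains is $-CM^p|\phi(x)-\phi(y)|^p\ge -CM^pr^{-p}|x-y|^p$, and this integrates to $-CM^pr^{d-sp}$ precisely because $s<1$. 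A smaller point: the off-diagonal integrand is $|u(x)-u(y)|^{p-2}(u(x)-u(y))(u(x)-2M)$. With the factor $2M-u(x)$ as you wrote it, the sign is reversed; your case analysis is correct for the right expression.

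\textbf{The bookkeeping in Step 3.} Dividing by $\theta Mr^{d-sp}$ produces the factor $r^{sp-p}$. For $r\le1$ this factor is at least $1$ and unbounded as $r\to0$, so the claim that the first coefficient is at most $C\theta^{-1}$ is false. The $u^-$ term likewise comes out as
\[
r^{sp}\int_{\rd\setminus B_R}\frac{(u^-)^{p-1}}{|y-x_0|^{d+sp}}\dy=r^{sp-p}\Bigl(\frac rR\Bigr)^{p}\Tail(u^-;x_0,R)^{p-1},
\]
not $(r/R)^p\Tail(u^-;x_0,R)^{p-1}$.

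The cause is that your off-diagonal gain $cMr^{d-sp}\Tail(u^+;x_0,r)^{p-1}$ uses the $R^{sp}$-normalization. In the paper's normalization that expression is only a weaker lower bound, smaller by the factor $r^{p-sp}$. With the weight $R^p$, the sharp gain is
\[
cMr^d\int_{\rd\setminus B_r}\frac{(u^+)^{p-1}}{|y-x_0|^{d+sp}}\dy=cMr^{d-p}\Tail(u^+;x_0,r)^{p-1}.
\]
For $r\le 1$ the dominant error is the local one, $CM^pr^{d-p}$, so you should divide by $\theta Mr^{d-p}$. This gives
\[
\Tail(u^+;x_0,r)^{p-1}\le C\bigl(\theta^{-1}+r^{p-sp}\bigr)M^{p-1}+C\Bigl(\frac rR\Bigr)^{p}\Tail(u^-;x_0,R)^{p-1},
\]
with $r^{p-sp}\le 1\le\theta^{-1}$. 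This is exactly the combination the paper obtains from its bounds on $I_1,\dots,I_4$, and it yields \eqref{tailest}.
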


\begin{proof}
 Let $M=\underset{B_r}{\esssup}\,u$ and $\phi\in \C_c^{\infty}(B_r)$ be a cut-off function such that
$0\leq\phi\leq 1$ in $B_r$, $\phi=1$ in $B_{\frac{r}{2}}(x_0)$ and $|\nabla\phi|\leq\frac{8}{r}$ in $B_r$.
By letting $w=u-2M$ and choosing $h=w\phi^p$ as a test function we obtain
\begin{align}\label{tailtest}
\int_{B_r} V(x)u^{p-1}w\phi^p\dx &=\int_{B_r}|\nabla u|^{p-2}\nabla u\cdot\nabla(w\phi^p)\dx\no\\
&\qquad+\theta\int_{B_r}\int_{B_r}\mathcal{A}(u(x,y))(w(x)\phi(x)^p-w(y)\phi(y)^p)\dm\no\\
&\qquad+2\theta\int_{B_r}\int_{\mathbb{R}^d\setminus B_r}\mathcal{A}(u(x,y))w(x)\phi(x)^p\dm\no\\
&=I_1+I_2+I_3.
\end{align}
By Young's inequality,
\begin{align*}
|\nabla w|^{p-2}\nabla w\cdot\nabla(w\phi^p)
&=|\nabla w|^p \phi^p+p\phi^{p-1}w|\nabla w|^{p-2}\nabla w\cdot\nabla\phi\\
&\geq\frac{1}{2}|\nabla w|^p\phi^p-C(p)|u|^p|\nabla\phi|^p
-C(p)M^p|\nabla\phi|^p,
\end{align*}
holds in $B_r$. By the properties of $\phi$, we have
\begin{equation}\label{tailestI1}
I_1=\int_{B_r}|\nabla u|^{p-2}\nabla u\cdot\nabla(w\phi^p) \dx\geq -C(p)M^p r^{-p}|B_r|.
\end{equation}
In view of $(4.11)$ and $(4.9)$ in \cite[Pages 1827--1828]{KuusiHarnack} and using the fact that $r\in(0,1)$, 
there exists $C=C(d,p,s)>0$ such that
\begin{align}\label{tailestI2}
I_2&=\theta\int_{B_r}\int_{B_r}\mathcal{A}(u(x,y))\left(w(x)\phi(x)^p-w(y)\phi(y)^p\right) \dm \geq -C \theta M^p r^{-p}|B_r|,
\end{align}
and
\begin{align}\label{tailestI3}
I_3&=2\theta\int_{B_r}\int_{\rd\setminus B_r}\mathcal{A}(u(x,y))w(x)\phi(x)^p \dm
\geq C \theta M r^{-p}\Tail(u^{+};x_0,r)^{p-1}|B_r|\no\\
&\qquad-C \theta MR^{-p}\Tail(u^{-};x_0,R)^{p-1}|B_r|- C \theta M^p r^{-p}|B_r|. 
\end{align}
Further, 
\begin{align}\label{tailestI4}
        \int_{B_r} |V(x)|u^{p-1}|u-2M|\phi^p\dx \le M^p \int_{B_r} |V(x)| \dx \le M^p \norm{V}_{L^q(B_r)} |B_r|^{\frac{q-1}{q}},
\end{align}
Combining \eqref{tailestI1}-\eqref{tailestI4}, we get from \eqref{tailtest},
\begin{align*}
    \Tail(u^{+};x_0,r)\leq C(d,p,s)\left(M\theta^{\frac{1}{1-p}} + %\theta^{\frac{1}{p-1}} 
    \Big(\frac{r}{R}\Big)^\frac{p}{p-1}\Tail(u^{-};x_0,R) + M\theta^{\frac{1}{1-p}} \left[ \norm{V}_{L^q} r^p |B_r|^{-\frac{1}{q}} \right]^{\frac{1}{p-1}} \right),
\end{align*}
where $r^p|B_r(x_0)|^{\frac{-1}{q}}=(\om_d)^{-\frac{1}{q}} r^{p-\frac dq}\leq C(d)$ using the fact that $r \le 1$ and $q>\frac{d}{p}$. Thus, \eqref{tailest} holds. 
\end{proof}

\section{Harnack inequalities}{\label{harnackinq}}
This section contains the proof of the Harnack inequalities. 
We start with the following lemma that proves the expansion of positivity for a weak supersolution to \eqref{eqn-scaled}.
\begin{lemma}{\label{expan}}
Assume that $u$ is a weak supersolution to \eqref{eqn-scaled} satisfying $u \geq 0$ in $B_R(x_0) \subset \Omega$. Let $0<r\leq1$ be such that $B_r(x_0) \subset B_{\frac R{16}}(x_0)$. Assume $k \geq 0$ and there exists $\tau \in(0,1]$ such that
\begin{align}{\label{expan1}}
    \left|B_r(x_0) \cap\{u \geq k\}\right| \geq \tau\left|B_r(x_0\right)|.
\end{align} Then there exists $\zeta_3(d,p)>0$ such that for $\|V\|_{L^q(\Omega)} \le \zeta_3$,
\begin{align}{\label{expan1.1}}
\underset{B_{4 r}\left(x_0\right)}{{\essinf}}\, u \geq \delta k-\theta^{\frac{1}{p-1}}\left(\frac{r}{R}\right)^{\frac{p}{p-1}} \operatorname{Tail}\left(u^{-} ; x_0, R\right),
\end{align}
where $\delta=\delta(d, p, s, \tau) \in \left(0, \frac{1}{4}\right)$.
\end{lemma}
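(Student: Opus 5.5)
We follow the De Giorgi route of Di Castro--Kuusi--Palatucci \cite[Lemma 3.2]{KuusiHarnack} (and its mixed version in \cite{garainjuha}), inserting the smallness of $\|V\|_{L^q}$ wherever the potential term appears. Set $T:=\theta^{\frac{1}{p-1}}(r/R)^{\frac{p}{p-1}}\Tail(u^-;x_0,R)$. If $\delta k\le T$ there is nothing to prove, so we assume $\delta k > T$, and we may also assume $k>0$.

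\textbf{Step 1 (measure shrinking via the log estimate).} Work on $B_{6r}\subset B_{R/2}$. Apply Lemma \ref{log-estimate} with radius $6r$ and $t=\varepsilon k$, and test against the truncation $v=\min\{(\log\frac{k+t}{u+t})^+,\log\frac1{\varepsilon}\}$, which is a Lipschitz function of $\log(u+t)$. Since $r\le 1$ and $\theta\le1$, the right side of Lemma \ref{log-estimate} is bounded by $Cr^{d-p}(1+\theta t^{1-p}(r/R)^p\Tail^{p-1})\le Cr^{d-p}$ once $\varepsilon k\ge T$. This is exactly where the $\theta^{\frac1{p-1}}$ in $T$ is used. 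Poincaré on $B_{6r}$ then gives $\fint_{B_{6r}}|v-\bar v|^p\le C$. Because $v=0$ on $\{u\ge k\}$, which by \eqref{expan1} occupies a fraction $\tau/6^d$ of $B_{6r}$, the standard argument yields
\[
\frac{|B_{6r}\cap\{u\le 2\varepsilon k\}|}{|B_{6r}|}\le \frac{C(d,p,s)}{\tau\log(1/\varepsilon)}.
\]
The contribution of $V$ is already absorbed into the constant of Lemma \ref{log-estimate}.

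\textbf{Step 2 (De Giorgi iteration from below).} Take levels $k_j=\varepsilon k(1+2^{-j})$ and radii $\rho_j=4r(1+2^{-j})$, and set $w_j=(u-k_j)^-$, which is bounded by $2\varepsilon k$ on $B_{8r}$. Apply Lemma \ref{energy estimate} for supersolutions. The tail term is split: the part inside $B_R$ is controlled by $2\varepsilon k$ since $u\ge0$ there, and the outer part is controlled by $\Tail(u^-;x_0,R)$, which is $\le C(\varepsilon k)^{p-1}$ because of the assumption $\varepsilon k\ge T$. The new term is $\int|V||u|^{p-1}w_j\phi^p$. On $\{u<k_j\}$ we have $|u|^{p-1}w_j\le (2\varepsilon k)^{p}\chi_{\{u<k_j\}}$, so by Hölder this term is at most $\|V\|_{L^q}(2\varepsilon k)^p|A_j|^{1-1/q}$, where $A_j=B_{\rho_j}\cap\{u<k_j\}$. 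With $q>d/p$ we have $1-1/q>1-p/d$, so this is a superlinear power of $|A_j|$ compatible with the iteration. Alternatively, as in the proof of Proposition \ref{local-boundedness-I}, one can interpolate via $\sigma$ and absorb a piece into the left side using $\|V\|\le\zeta_3$. Combining with Sobolev, we obtain for $Y_j=|A_j|/|B_{\rho_j}|$ a recursion of the form $Y_{j+1}\le C b^jY_j^{1+\beta}$ with $\beta=\min\{p/d,\,p/d-1/q\}>0$, where the factor $r^{p-d/q}\le1$ is used. By Lemma \ref{iteration}, if $Y_0\le\nu(d,p,s)$ then $Y_j\to0$, that is, $u\ge\varepsilon k$ on $B_{4r}$.

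\textbf{Step 3 (conclusion).} Choose $\varepsilon$ so small that $C/(\tau\log(1/\varepsilon))\le\nu$, and note that $Y_0$ is dominated by the ratio from Step 1 (up to a dimensional factor from $B_{8r}$ versus $B_{6r}$; one may equivalently use radius $8r$ in Step 1, since $B_{8r}\subset B_{R/2}$). Set $\delta=\varepsilon/2<1/4$. In the case $\delta k>T$ this gives $\essinf_{B_{4r}}u\ge \varepsilon k\ge\delta k-T$, which is \eqref{expan1.1}.

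The main obstacle is Step 2, specifically handling the $V$-term uniformly in $j$ without spoiling the exponent: the term must scale like $(\varepsilon k)^p|A_j|^{1+\beta}r^{-p}$. This is where $q>d/p$ (equivalently $p\sigma<p^*$) and the smallness $\zeta_3$ are used, and $r\le1$ controls the leftover power of $r$.
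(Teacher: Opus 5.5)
Your overall route is the paper's: a logarithmic estimate gives measure decay of a sublevel set, then a De Giorgi iteration from below on shrinking balls, using $u\ge 0$ in $B_R$ and the reduction $\delta k> T$ to control the tail. It works, but Step 1 has a slip as written. With $t=\varepsilon k$ and the cap $\log\frac1\varepsilon$, the set where $v$ reaches the cap is
\[
\Big\{\tfrac{k+\varepsilon k}{u+\varepsilon k}\ge \tfrac1\varepsilon\Big\}=\{u\le \varepsilon^2 k\},
\]
which does not contain $\{u\le 2\varepsilon k\}$. For instance, at $u=\varepsilon k$ you get $v=\log\frac{1+\varepsilon}{2\varepsilon}<\log\frac1\varepsilon$. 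Because your shift $t$ is as large as the target level, the cap must be raised. Capping at $\log\frac{1}{3\varepsilon}$ gives the set $\{u\le 2\varepsilon k+3\varepsilon^2 k\}\supset\{u\le 2\varepsilon k\}$ and the bound $C/(\tau\log\frac{1}{3\varepsilon})$. This still tends to $0$, so Step 3 goes through. The paper avoids the issue by taking the shift equal to $\frac12T$ plus an arbitrarily small constant, which is at most $\frac12\delta k$ under its reduction, and capping at $\log\frac1{2\delta}$.

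Two smaller corrections. In Step 2 the outer tail piece should read $\theta R^{-p}\Tail(u^-;x_0,R)^{p-1}=r^{-p}T^{p-1}\le r^{-p}(\varepsilon k)^{p-1}$; the factor $\theta$ comes from Lemma \ref{energy estimate}. Also, for $Y_j=|A_j|/|B_{\rho_j}|$ the recursion $Y_{j+1}^{1-p/d}\le Cb^jY_j^{1-1/q}$ gives $\beta=\frac{p-d/q}{d-p}$, not $\min\{p/d,p/d-1/q\}$. Only $\beta>0$ matters, so this is harmless.

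Where you genuinely differ from the paper is the potential term. You bound it directly by
\[
\int_{B_{\rho_j}}|V||u|^{p-1}w_j\phi_j^p\le(2\varepsilon k)^p\|V\|_{L^q}|A_j|^{1-1/q},
\]
and the needed superlinearity $1-\frac1q>1-\frac pd=\frac p{p^*}$ is exactly $q>\frac dp$. The paper instead does three things:
\begin{enumerate}[(i)]
\item it controls $\|w_j\phi_j\|_{L^{q/(q-1)}}$ by interpolating between $L^{p^*}$ and $L^1$, using the exponents $\alpha,\sigma$ from Proposition \ref{local-boundedness-I};
\item it absorbs the resulting $\|w_j\phi_j\|_{L^{p^*}}^p$ into the gradient term via $\|V\|_{L^q}\le\zeta_3$;
\item it arrives at $|A_j|^{1/\sigma}$ with $p\sigma<p^*$.
\end{enumerate}
Your version is shorter. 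It also shows that smallness of $\|V\|_{L^q}$ is not needed for this lemma: only an upper bound enters the constants, just as $\|V\|$ already enters the paper's $C_2$ and $\delta$. The paper's version buys only uniformity with the local boundedness argument.
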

\begin{proof}
Our proof includes two steps.\\
\textbf{Step 1.} Let $\var>0$. Under the assumption in \eqref{expan1}, we claim that there exists  $C_1=C_1(d, p, s)$ such that
\begin{align}{\label{expan2}}
\left|B_{6r}(x_0) \cap\left\{u \leq 2 \delta k-\frac{1}{2}\theta^{\frac{1}{p-1}}\left(\frac{r}{R}\right)^{\frac{p}{p-1}} \operatorname{Tail}\left(u^{-} ; x_0, R\right)-\var\right\}\right| \leq \frac{C_1}{\tau \log \frac{1}{2 \delta}}\left|B_{6r}(x_0)\right|,
\end{align}
for every $\delta \in\left(0, \frac{1}{4}\right)$.
Consider a  cut-off function $\phi \in \C_c^{\infty}(B_{7 r}(x_0))$ such that $0 \leq \phi \leq 1$ in $B_{7 r}(x_0), \phi=1$ in $B_{6 r}$ and $|\nabla \phi| \leq \frac{8}{r}$ in $B_{7 r}(x_0)$. Define $v:=u+t_\var$, where
$$
t_\var=\frac{1}{2}\theta^{\frac{1}{p-1}}\left(\frac{r}{R}\right)^{\frac{p}{p-1}} \operatorname{Tail}\left(u^{-} ; x_0, R\right)+\var .
$$
As $u$ is a weak supersolution, we choose $v^{1-p} \phi^p$ as a test function and estimate like Lemma \ref{log-estimate} to obtain
\begin{align}{\label{expan3}}
&\int_{B_{6r}}|\nabla \log (u+t_\var)|^p \dx +\theta\iint_{B_{6r} \times B_{6r}}\left|\log \left(\frac{u(x)+t_\var}{u(y)+t_\var}\right)\right|^p \dm  \nonumber\\&\leq C r^d\left(r^{-p}+\theta r^{-p s}+\theta t_\var^{1-p} R^{-p} \operatorname{Tail}\left(u^{-} ; x_0, R\right)^{p-1}\right)\leq Cr^{d-p},
\end{align}
where $C=C(d, p, s)$. For $\delta \in\left(0, \frac{1}{4}\right)$, we denote
$$
w=\left(\min \left\{\log \frac{1}{2 \delta}, \log \frac{k+t_\var}{v}\right\}\right)^{+} .
$$
By \eqref{expan3}, we have
\begin{align}{\label{expan4}}
\int_{B_{6 r}}|\nabla w|^p \dx \leq \int_{B_{6 r}}|\nabla \log v|^p \dx \leq C r^{d-p}.
\end{align}
From \eqref{expan4}, by Hölder's inequality and Poincaré inequality, we obtain
\begin{align}{\label{expan5}}
\int_{B_{6 r}}\left|w-(w)_{B_{6 r}}\right| \dx& \leq Cr^{\frac {d}{p^\prime}}\left(\int_{B_{6 r}}\left|w-(w)_{B_{6 r}}\right|^p \dx\right)^{\frac{1}{p}} \no\\&\leq Cr^{1+\frac{d}{p^{\prime}}}\left(\int_{B_{6r}}|\nabla w|^p \dx\right)^{\frac{1}{p}} \leq C\left|B_{6 r}\right|,
\end{align}
where $p^{\prime}=\frac{p}{p-1}$ and $(w)_{B_{6 r}}=\fint_{B_{6r}} v \dx$. We observe that $\{w=0\}=\left\{v \geq k+t_\var\right\}=\{u \geq k\}$. By the assumption \eqref{expan1}, it follows that
\begin{align}{\label{expan6}}
    \left|B_{6 r} \cap\{w=0\}\right| \geq \frac{\tau}{6^d}\left|B_{6r}\right|.
\end{align}
Following the proof of \cite[Lemma 3.1]{KuusiHarnack} and using \eqref{expan6}, we obtain
\begin{align}{\label{expan7}}
\log \frac{1}{2 \delta} & =\frac{1}{\left|B_{6 r} \cap\{w=0\}\right|} \int_{B_{6 r} \cap\{w=0\}}\left(\log \frac{1}{2 \delta}-w(x)\right) \dx  \leq \frac{6^d}{\tau}\left(\log \frac{1}{2 \delta}-(w)_{B_{6 r}}\right).
\end{align}
Now integrating \eqref{expan7} over the set $B_{6 r} \cap\left\{w=\log \frac{1}{2 \delta}\right\}$ and using \eqref{expan5}, there exists $C_1=C_1(d, p, s)$ such that
$$
\left|\left\{w=\log \frac{1}{2 \delta}\right\} \cap B_{6 r}\right| \log \frac{1}{2 \delta} \leq \frac{6^d}{\tau} \int_{B_{6 r}}\left|w-(w)_{B_{6 r}}\right| \dx \leq \frac{C_1}{\tau}\left|B_{6 r}\right| .
$$
Hence, for any $\delta \in\left(0, \frac{1}{4}\right)$, we have
$$
\left|B_{6 r} \cap\left\{v \leq 2 \delta\left(k+t_\var\right)\right\}\right| \leq \frac{C_1}{\tau} \frac{1}{\log \frac{1}{2 \delta}}\left|B_{6r}\right| .
$$
This implies \eqref{expan2}.
\smallskip\\\textbf{Step 2.} We claim that, for every $\var>0$, there exists a constant $\delta=\delta(d, p, s,  \tau) \in\left(0, \frac{1}{4}\right)$ such that
\begin{align}{\label{expan8}}
\underset{B_{4 r}(x_0)}{\essinf}\, u \geq \delta k-\theta^{\frac{1}{p-1}}\left(\frac{r}{R}\right)^{\frac{p}{p-1}} \operatorname{Tail}\left(u^{-} ; x_0, R\right)-2 \var .
\end{align}As a consequence of \eqref{expan8}, the estimate \eqref{expan1.1} follows.
To prove \eqref{expan8}, without loss of generality, we may assume that
\begin{align}{\label{expan9}}
    \delta k \geq\theta^{\frac{1}{p-1}}\left(\frac{r}{R}\right)^{\frac{p}{p-1}} \operatorname{Tail}\left(u^{-} ; x_0, R\right)+2 \var.
\end{align}
Otherwise \eqref{expan8} holds true, since $u \geq 0$ in $B_r$.
Let $\rho \in[r, 6 r]$ and $\phi \in \C_c^{\infty}(B_\rho(x_0))$ be a cut-off function such that $0 \leq \phi \leq 1$ in $B_\rho(x_0)$. For any $l \in(\delta k, 2 \delta k)$, we denote $w=(l-u)^{+}$. From Lemma \ref{energy estimate}, for $C=C(d, p, s)$, we obtain
 \begin{align}{\label{expan10}}
       \fint_{B_{\rho}} \phi^{p} |\nabla w|^{p} \dx 
       %+ \theta\iint_{B_{\rho} \times B_{\rho}} |w(x)\phi(x) - w(y)\phi(y)|^{p} \dm
       &\le C \Bigg( \fint_{B_{\rho}} w^{p} |\nabla \phi|^{p} \dx + \fint_{B_{\rho}}| V(x)| u^{p-1} w \phi^p \dx  \nonumber \\
       &+ \theta\fint_{B_\rho}\int_{B_\rho} \max\{w(x), w(y)\}^{p} |\phi(x)-\phi(y)|^{p} \dm   \nonumber \\
       &+ \theta\underset{x \in \text{supp} (\phi)}{\esssup}\, \int_{\rd \setminus B_{\rho}} \frac{w(y)^{p-1}}{|x-y|^{d+ps}} \dy \cdot \fint_{B_{\rho}} w\phi^{p} \dx\Bigg),
   \end{align}
We will apply Lemma \ref{iteration} to conclude the proof. For $j=0,1,2, \ldots$, we denote
\begin{align*}
l=k_j=\delta k+2^{-j-1} \delta k, \quad \rho=\rho_j=4 r+2^{1-j} r, \quad \hat{\rho}_j=\frac{\rho_j+\rho_{j+1}}{2}.
\end{align*}
Then $l \in(\delta k, 2 \delta k), \rho_j, \hat{\rho_j} \in(4 r, 6 r)$ and
\begin{align}{\label{obser}}
k_j-k_{j+1}=2^{-j-2} \delta k \geq 2^{-j-3} k_j,
\end{align}
for every $j$. Set $B_j=B_{\rho_j}\left(x_0\right), \hat{B}_j=B_{\hat{\rho}_j}\left(x_0\right)$ and we observe that
\begin{align*}
w_j=\left(k_j-u\right)^{+} \geq 2^{-j-3} k_j \chi_{\left\{u<k_{j+1}\right\}}.
\end{align*}
We now consider a sequence of cut-off functions $\left(\phi_j\right)_{j=0}^{\infty} \subset \C_c^{\infty}(\hat{B}_j)$ such that $0 \leq \phi_j \leq 1$ in $\hat{B}_j, \phi_j=1$ in $B_{j+1}$ and $\left|\nabla \phi_j\right| \leq \frac{2^{j+3}}{r}$. We choose $\phi=\phi_j, w=w_j$ in \eqref{expan10} to have
 \begin{align}{\label{expan101}}
       \fint_{B_{j}} \phi_j^{p} |\nabla w_j|^{p} \dx 
       %+ \theta\iint_{B_{\rho} \times B_{\rho}} |w(x)\phi(x) - w(y)\phi(y)|^{p} \dm
       &\le C \Bigg( \fint_{B_{j}} w_j^{p} |\nabla \phi_j|^{p} \dx + \fint_{B_{j}}| V(x)| u^{p-1} w _j\phi_j^p \dx  \nonumber \\
       &+ \theta\fint_{B_j}\int_{B_j} \max\{w_j(x), w_j(y)\}^{p} |\phi_j(x)-\phi_j(y)|^{p} \dm   \nonumber \\
       &+ \theta\underset{x \in \text{supp} (\phi_j)}{\esssup}\, \int_{\rd \setminus B_{j}} \frac{w_j(y)^{p-1}}{|x-y|^{d+ps}} \dy \cdot \fint_{B_{j}} w_j\phi_j^{p} \dx\Bigg)=:J_1+J_2+J_3+J_4.
   \end{align}
We estimate each term present in the right-hand side of \eqref{expan101}. Using the properties of $\phi_j$, we have
\begin{align}{\label{J_1ex}}
J_1=\fint_{B_{j}} w_j^{p} |\nabla \phi_j|^{p} \dx\leq C 2^{jp}r^{-p}\fint_{B_{j}} w_j^{p}  \dx.
    \end{align}
 For the term $J_2$, recall that $V\in L^q(\Omega)$, where $q$ is described in \eqref{weight}, and estimate as   
    \begin{align}{\label{expan13}}
\int_{B_{j}}| V(x)| u^{p-1} w_j \phi_j^p \dx=\int_{B_{j}}| V(x)| u^{p-1} (k_j-u)^+ \phi_j^p \dx &\leq k_j^{p-1}\int_{B_{j}}| V(x)| (k_j-u)^+ \phi_j^p \dx\no\\&\leq {k}_j^{p-1} \left\|V \phi_j^{p-1} \right\|_{L^{q}(B_j)} \left\|{w}_j\phi_j\right\|_{L^{\frac{q}{q-1}}(B_j)} \no\\&\leq {k}_j^{p-1} \left\|V  \right\|_{L^{q}(B_j)} \left\|{w}_j\phi_j\right\|_{L^{\frac{q}{q-1}}(B_j)}.
\end{align}
% Applying the generalized H\"{o}lder's inequality $\frac{1}{p_1} = \frac{\al}{p_2} + \frac{1-\al}{p_3}$ with the conjugate triplet $(p_1,p_2,p_3)$, where 
% \begin{align*}
%     p_1=\frac{q}{q-1}, p_2= p^*, p_3=1, \text{ and } \al=\frac{p^*}{(p^*-1)q},
% \end{align*}
% and observing the fact that 
% \begin{align*}
%     q>\frac{d}{p} \Longrightarrow \frac{q}{q-1} < \frac{d}{d-p} <p^* \Longrightarrow q > \frac{p^*}{p^*-1} \Longrightarrow \al < 1,
% \end{align*}
% we now estimate
% \begin{align*}
%     \left\|{w}_j\phi_j \right\|_{L^{\frac{q}{q-1}}(B_j)}  \le \left\| {w}_j \phi_j \right\|_{L^{p^*}(B_j)}^{\al} \left\|{w}_j\phi_j\right\|_{L^1(B_j)}^{1-\al}.
% \end{align*}
%Now, applying the Young's inequality with coefficients $(\frac{p}{\al},\frac{p}{p-\al})$, 
Now, proceeding as in Proposition \ref{local-boundedness-I} we obtain 
\begin{align*}
{k}_j^{p-1} \left\|{w}_j\phi_j\right\|_{L^{\frac{q}{q-1}}(B_j)} \leq \|{w}_j\phi_j\|_{L^{p^*}(B_j)}^p+{k}_j^{\frac{p(p-1)}{p-\al}}\|{w}_j\phi_j\|_{L^{1}(B_j)}^{\frac{p(1-\al)}{p-\al}},
\end{align*}
where $\al < 1$.
By $W^{1,p}_0(B_j) \hookrightarrow L^{p^*}(B_j)$ and the fact that ${w}_j\phi_j \in W^{1,p}_0(B_j)$, we get 
\begin{align*}
    \|{w}_j\phi_j\|_{L^{p^*}(B_j)}^p \le C(d,p) \left( \int_{B_j}\phi_j^p|\nabla {w}_j|^p \dx + \int_{B_j}{w}_j^p|\nabla\phi_j|^p\dx \right). 
\end{align*}
Observe that $w_j$ is supported in $\{ u<k_j\}$ and $w_j \le k_j$ a.e. in $B_j \cap \{ u<k_j\}$. Therefore, choosing $\zeta_3 =  \zeta_3(d,p) >0$ small enough so that $\|V\|_{L^q(\Omega)}C(d,p)\leq \frac 12$, and noting that $\rho_j\in(4r,6r)$, we use the properties of $\phi_j$ and \eqref{expan13}, to get
\begin{align}{\label{J_2ex}}
J_2&=\fint_{B_{j}}| V(x)| u^{p-1} w_j \phi_j^p \dx \no\\&\leq \frac 12\fint_{B_j}\phi_j^p|\nabla {w}_j|^p \dx +\fint_{B_{j}} w_j^{p} |\nabla \phi_j|^{p} \dx+C r^{-d}
\|V\|_{L^q(\Omega)}{k}_j^{\frac{p(p-1)}{p-\al}}\|{w}_j\phi_j\|_{L^{1}(B_j)}^{\frac{p(1-\al)}{p-\al}}\no\\&\leq \frac 12\fint_{B_j}\phi_j^p|\nabla {w}_j|^p \dx +\fint_{B_{j}} w_j^{p} |\nabla \phi_j|^{p} \dx+C r^{-d}\|V\|_{L^q(\Omega)}{k}_j^{\frac{p(p-1)}{p-\al}+\frac{p(1-\alpha)}{p-\alpha}}\|\phi_j\|_{L^{1}(B_j \cap\left\{u<k_j\right\})}^{\frac{p(1-\al)}{p-\al}}\no\\&\leq  \frac 12\fint_{B_j}\phi_j^p|\nabla {w}_j|^p \dx +\fint_{B_{j}} w_j^{p} |\nabla \phi_j|^{p} \dx+C r^{-d}\|V\|_{L^q(\Omega)}k_j^p\left|B_j \cap\left\{u<k_j\right\}\right|^{\frac 1\sigma}\no\\&\leq \frac 12\fint_{B_j}\phi_j^p|\nabla {w}_j|^p \dx +C 2^{jp}r^{-p}\fint_{B_{j}} w_j^{p}+C r^{\frac d\sigma-d}\|V\|_{L^q(\Omega)}k_j^p\left(\frac{\left|B_j \cap\left\{u<k_j\right\}\right|}{|B_j|}\right)^{\frac 1\sigma},
\end{align}
where $\sig=\frac{(dp-d+p)q-d}{(dp-d+p)q-dp}>1$ and $C=C(d,p,s)$. In the second last line of \eqref{J_2ex} we have used the fact $ \frac{p(p-1)}{p- \alpha} = p - \frac{1}{\sigma}$. 
Further, we estimate
\begin{align}{\label{J_3ex}}
    J_3=\theta\fint_{B_j}\int_{B_j} \max\{w_j(x), w_j(y)\}^{p} |\phi_j(x)-\phi_j(y)|^{p} \dm   & \le C 2^{jp} r^{-p} \fint_{B_j} w_j^p(x) \left( \int_{B_j} \frac{\dy}{\abs{x-y}^{d+ps-p}} \right) \dx \no\\
    & \le \frac{C2^{jp}r^{-p}r^{p-ps}}{p(1-s)} \fint_{B_j} w_j^p \dx\no\\& \le C2^{jp}r^{-p}\fint_{B_j} w_j^p \dx, 
\end{align}
where $C=C(p,s)$. To estimate $J_4$, we observe that, for any $x \in \operatorname{supp} \phi_j \subset \hat{B}_j$ and $y \in \mathbb{R}^d \backslash B_j$, we have
$$
\frac{\left|y-x_0\right|}{|y-x|}=\frac{\left|y-x+x-x_0\right|}{|y-x|} \leq 1+\frac{\left|x-x_0\right|}{|y-x|} \leq 1+\frac{\hat{\rho}_j}{\rho_j-\hat{\rho}_j}\le2^{j+4} .
$$Thus, we obtain
\begin{align}{\label{J_4ex}}
   J_4&= \theta\underset{x \in \text{supp} (\phi_j)}{\esssup}\,
\int_{\rd \setminus B_{j}} \frac{w_j(y)^{p-1}}{|x-y|^{d+ps}} \dy \cdot \fint_{B_{j}} w_j\phi_j^{p} \dx\no\\
    &\le \theta C 2^{j(d+sp)}\int_{\rd \setminus B_{j}} \frac{w_j(y)^{p-1}}{|x_0-y|^{d+ps}} \dy \cdot \fint_{B_{j}} w_j\phi_j^{p} \dx\no\\
    &\le\theta C 2^{j(d+sp)}\left(\int_{\rd \setminus B_{j}} \frac{k_j^{p-1}}{|x_0-y|^{d+ps}} \dy +\int_{\rd \setminus B_{R}} \frac{(u(y)^-)^{p-1}}{|x_0-y|^{d+ps}} \dy \right)\cdot \fint_{B_{j}} w_j\dx\no\\
    &\le C 2^{j(d+sp)}\left(k_j^{p-1} r^{-ps}+r^{-p}\theta\left(\frac{r}{R}\right)^p \operatorname{Tail}\left(u^{-} ; x_0, R\right)^{p-1}\right) \cdot \fint_{B_{j}} w_j\dx\no\\
    &\le C 2^{j(d+sp)}k_j^{p-1} r^{-p}\fint_{B_{j}} w_j\dx,
\end{align} 
where $C=C(d,s,p)$. Here we have used the fact that $r \in(0,1]$ along with \eqref{expan9}, $\delta k<k_j$ and $u \geq 0$ a.e. in $B_R$.
Also, observe that 
\begin{align}{\label{sigmap}}\fint_{B_j}w_j^p\dx\leq \left(\fint_{B_j}w_j^{p\sig}\dx\right)^{\frac1\sig}.
\end{align}
Merging \eqref{J_1ex}, \eqref{J_2ex}, \eqref{J_3ex} and \eqref{J_4ex}, we obtain from \eqref{expan101} using \eqref{sigmap},
\begin{align}{\label{expan102}}
       \fint_{B_{j}} \phi_j^{p} |\nabla w_j|^{p} \dx 
       %+ \theta\iint_{B_{\rho} \times B_{\rho}} |w(x)\phi(x) - w(y)\phi(y)|^{p} \dm
       \le&\, C 2^{jp}r^{-p}\left(\fint_{B_j}w_j^{p\sig}\dx\right)^{\frac1\sig}+C r^{\frac d\sigma-d}k_j^p\left(\frac{\left|B_j \cap\left\{u<k_j\right\}\right|}{|B_j|}\right)^{\frac 1\sigma}\no\\&+C2^{j(d+sp)}k_j^{p-1} r^{-p}\fint_{B_{j}} w_j\dx\no\\ \leq &\,C 2^{jp}r^{-p}k_j^p\left(\frac{\left|B_j \cap\left\{u<k_j\right\}\right|}{|B_j|}\right)^{\frac 1\sigma}+C r^{\frac d\sigma-d}k_j^p\left(\frac{\left|B_j \cap\left\{u<k_j\right\}\right|}{|B_j|}\right)^{\frac 1\sigma}\no\\&+C2^{j(d+sp)}k_j^{p-1} r^{-p}\left(\fint_{B_{j}} w_j^\sigma\dx\right)^{\frac 1\sigma}\no\\\leq &\,C 2^{j{(p+sp+d)}}r^{-p}k_j^p\left(\frac{\left|B_j \cap\left\{u<k_j\right\}\right|}{|B_j|}\right)^{\frac 1\sigma}+C r^{\frac d\sigma-d}k_j^p\left(\frac{\left|B_j \cap\left\{u<k_j\right\}\right|}{|B_j|}\right)^{\frac 1\sigma}.
   \end{align}
Here we again use the fact that $w_j\le k_j$ a.e. in $B_j \cap \{ u<k_j \}$. 
By applying the Sobolev inequality as in \eqref{e.friedrich} along with \eqref{J_1ex}, \eqref{sigmap}, \eqref{expan102} and the properties of $\phi_j$, there exists $C=C(d, p, s,\|V\|)$ such that
\begin{align}{\label{expan18}}
(k_{j}-{k}_{j+1})^{p}\left(\frac{\left|B_{j+1} \cap\left\{u<k_{j+1}\right\}\right|}{|B_{j+1}|}\right)^{\frac{p}{p^*}}& \leq\left(\fint_{B_{j+1}}({w}_j\phi_j)^{p^*}\dx\right)^{\frac{p}{p^*}} \leq C\left(\fint_{B_{j}}({w}_j\phi_j)^{p^*}\dx\right)^{\frac{p}{p^*}}\no\\
& \leq C r^p \fint_{B_j}|\nabla(w_j \phi_j)|^p \dx \no\\
& \leq C 2^{j(d+p s+p)} \left(1+r^{p-d+\frac d\sigma}\right)k_j^p \left(\frac{\left|B_j \cap\left\{u<k_j\right\}\right|}{|B_j|}\right)^{\frac 1\sigma} .
\end{align}
As $r\in(0,1)$ and $p-d+\frac d\sigma\geq 0$, we have from \eqref{expan18} that
\begin{align}{\label{expan20}}
   (k_{j}-{k}_{j+1})^{p}\left(\frac{\left|B_{j+1} \cap\left\{u<k_{j+1}\right\}\right|}{|B_{j+1}|}\right)^{\frac{p}{p^*}}\leq C 2^{j(d+p s+p)}k_j^p \left(\frac{\left|B_j \cap\left\{u<k_j\right\}\right|}{|B_j|}\right)^{\frac 1\sigma} .
\end{align}Let
$$
Y_j=\left(\frac{\left|B_j \cap\left\{u<k_j\right\}\right|}{|B_j|}\right)^{\frac{1}{p\sigma}}.
$$
From \eqref{expan20} and \eqref{obser} there exists $C_2=C_2(d, p, s,\|V\|)>1$ such that
\begin{align}{\label{expan21}}
&\left(Y_{j+1}\right)^{\frac{p^2\sig}{p^*}} \leq C_2 2^{j(d+2 p+p s)} Y_j^p \implies Y_{j+1}\leq C_2\hat{C}^j\left(Y_j\right)^{1+\beta},
\end{align}
where using $p\sig <p^*$, we see that $\beta:=\frac{p^*}{p\sig}-1>0$ and $\hat{C}:=2^{\left(\frac{d+2p+ps}{p}\right)\frac{p^*}{p\sig}}>1.$ We choose $c_0=C_2$ and $b=\hat{C}$ in Lemma \ref{iteration}. By \eqref{expan9}, 
\begin{align*}
k_0=\frac{3}{2} \delta k \leq 2 \delta k-\frac{1}{2}\theta^{\frac{1}{p-1}}\left(\frac{r}{R}\right)^{\frac{p}{p-1}} \operatorname{Tail}\left(u^{-} ; x_0, R\right)-\var .
\end{align*}
By \eqref{expan2},
\begin{align}{\label{expan22}}
Y_0 \leq \Bigg(\frac{\left|B_{6r} \cap\left\{u \leq 2 \delta k-\frac 12\theta^{\frac{1}{p-1}}\left(\frac{r}{R}\right)^{\frac{p}{p-1}} \operatorname{Tail}\left(u^{-} ; x_0, R\right)-\var\right\}\right|}{\left|B_{6r}\right|}\Bigg)^{\frac{1}{p\sig}} \leq \left(\frac{C_1}{\tau \log \frac{1}{2 \delta}}\right)^{\frac{1}{p\sig}},
\end{align}
for $C_1=C_1(d, p, s)$ and for every $\delta \in\left(0, \frac{1}{4}\right)$. Using \eqref{expan22} we now choose $\delta=\delta(d, p, s, \|V\|, \tau)$ as
\begin{align*}
0<\delta:=\frac{1}{4} \exp \left(-\frac{C_1 c_0^{\frac{p\sig}{\beta}} b^{\frac{p\sig}{\beta^2}}}{\tau}\right)<\frac{1}{4},
\end{align*}
so that the estimate $Y_0 \leq c_0^{-\frac{1}{\beta}} b^{-\frac{1}{\beta^2}}$ holds in Lemma \ref{iteration}. Therefore, in \eqref{expan21}, Lemma \ref{iteration} infers that $Y_j \rightarrow 0$ as $j \rightarrow \infty$. Thus we have
\begin{align*}
\underset{B_{4 r}(x_0)}{\essinf } \,u \geq \delta k,
\end{align*}
which gives \eqref{expan8} and so \eqref{expan1.1} holds. This completes the proof. 
\end{proof}

Proceeding similarly as in the proof of \cite[Lemma 4.1]{KuusiHarnack}, along with an application of Lemma \ref{expan}, we obtain the following preliminary version of the weak Harnack inequality, compared to Theorem \ref{weakhar_intro}.

\begin{lemma}{\label{lower}} 
Assume that $u$ is a weak supersolution to \eqref{eqn-scaled} satisfying $u \geq 0$ in $B_R(x_0) \subset \Omega$. Let $0<r\leq1$ be such that $B_r(x_0) \subset B_{\frac R{2}}(x_0)$. Then there exists $\zeta_3(d,p)>0$ such that for $\|V\|_{L^q(\Omega)} \le \zeta_3$, the following holds
$$
\left(\fint_{B_r(x_0)} u^\la \dx\right)^{\frac{1}{\la}} \leq C\, \underset{B_r(x_0)}{\essinf}\, u+C\theta^{\frac{1}{p-1}}\left(\frac{r}{R}\right)^{\frac{p}{p-1}} \operatorname{Tail}\left(u^{-} ; x_0, R\right),
$$
where $\la=\la(d, p, s) \in(0,1)$ and $C=C(d, p, s) \geq 1$.
\end{lemma}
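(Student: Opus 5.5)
The plan follows the covering/measure-decay argument of \cite[Lemma 4.1]{KuusiHarnack}, with Lemma \ref{expan} as the only place where the equation (and hence $V$) enters. Write $T:=\theta^{\frac{1}{p-1}}\left(\frac{r}{R}\right)^{\frac{p}{p-1}}\operatorname{Tail}(u^-;x_0,R)$ and $I:=\essinf_{B_r} u$. I aim for a distributional estimate of the form
\begin{equation*}
\left|B_r\cap\{u>t\}\right|\le C\,|B_r|\left(\frac{I+T}{t}\right)^{\epsilon}\quad\text{for all } t>0,
\end{equation*}
with $\epsilon=\epsilon(d,p,s)>0$. Once this holds, the layer-cake formula $\fint_{B_r}u^\la \dx=\la\int_0^\infty t^{\la-1}|B_r\cap\{u>t\}|\,dt/|B_r|$, with any $\la<\epsilon$ and the integral split at $t=I+T$, gives $\left(\fint_{B_r}u^\la\right)^{1/\la}\le C(I+T)$, which is the claim.

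\textbf{Step 1: a Krylov--Safonov type covering.} Let $\tau\in(0,1)$ be small and fixed. For a point $x\in B_r$ with $u>t$ of density in the sense of Lebesgue, I take a small ball $B_\rho(x)$ on which $|B_\rho(x)\cap\{u\ge t\}|\ge\tau|B_\rho(x)|$. I then apply Lemma \ref{expan} on that ball. Its hypotheses hold: the radius is at most $r\le1$, $B_\rho(x)\subset B_{R/16}$ after shrinking, and $\|V\|\le\zeta_3$. I use Lemma \ref{expan} with $R$ replaced by $R/2$ and centre $x$. Since $B_{R/2}(x)\subset B_R(x_0)$ and $u\ge0$ there, the tail of $u^-$ centred at $x$ with radius $R/2$ is controlled by $C\operatorname{Tail}(u^-;x_0,R)$, using $|y-x_0|\le 2|y-x|$ for $y\notin B_{R/2}(x)$. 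The outcome is
\begin{equation*}
u\ge\delta t-C\,\theta^{\frac{1}{p-1}}\left(\frac{\rho}{R}\right)^{\frac{p}{p-1}}\operatorname{Tail}(u^-;x_0,R)\ge\delta t-CT\quad\text{on } B_{4\rho}(x).
\end{equation*}
A Vitali or Calderón--Zygmund-type covering (the ``ink-spots'' lemma, as in \cite{KuusiHarnack}) then yields the inclusion
\begin{equation*}
\left|B_r\cap\{u>t\}\right|\le\frac{C}{\tau}\left|B_r\cap\{u>\delta t-CT\}\right|\quad\text{or}\quad \{u>\delta t-CT\}\supset B_r.
\end{equation*}
In the second alternative, $\delta t-CT\le I$ gives the bound directly.

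\textbf{Step 2: iteration.} I apply Step 1 along the levels $t_i=\delta^{-i}(I+C'T)$ for a large constant $C'$, so that $\delta t_i-CT\ge t_{i-1}$. This gives a geometric decay $|B_r\cap\{u>t_i\}|\le\gamma^i|B_r|$ with $\gamma<1$, provided $\tau$ is chosen so that the measure density propagates. That is the desired distributional bound with $\epsilon=\log(1/\gamma)/\log(1/\delta)$. All constants depend only on $(d,p,s)$ because $\tau$ is fixed, and $\delta$ from Lemma \ref{expan} depends only on $(d,p,s,\tau)$ once $\|V\|\le\zeta_3$.

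\textbf{Main obstacle.} The hardest part is the covering step, where the scales of the balls differ from $r$. Two things must be checked there. First, the tail term produced by Lemma \ref{expan} at smaller radius $\rho$ must stay bounded by $T$; this works since $\rho\le r$ and the prefactor is monotone in the radius. Second, the inclusions $B_{4\rho}(x)\subset B_{R/2}$ must be preserved. The first thing I would try is to run the covering inside $B_r$ using balls $B_\rho(x)$ with $4\rho\le r$ and $\rho\le R/32$, and to absorb the boundary effects into $C$.
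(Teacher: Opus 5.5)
Your strategy is the one the paper relies on. Its proof of this lemma is only ``proceed as in \cite[Lemma 4.1]{KuusiHarnack} using Lemma \ref{expan}'', and that argument is a Krylov--Safonov covering fed by expansion of positivity, geometric decay of $|B_r\cap\{u>t\}|$, and integration in $t$. Your recentring of the tail at $x\in B_r$ with radius $R/2$ is correct. However, Step 1 as written cannot drive Step 2. The inequality $|B_r\cap\{u>t\}|\le \frac{C}{\tau}|B_r\cap\{u>\delta t-CT\}|$ has constant $\ge 1$, so it is weaker than the trivial inclusion $\{u>t\}\subset\{u>\delta t-CT\}$, and no choice of $\tau$ makes it contract. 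Taking ``a small ball'' of density $\ge\tau$ at each density point and applying Vitali only gives a factor $C(d)\ge 1$. The missing idea is the stopping time behind the Krylov--Safonov alternative ($|E|\le c\gamma|[E]_\gamma|$ or $[E]_\gamma=B_r$). For a density point $x$ of $E=B_r\cap\{u>t\}$, take the \emph{largest} $\rho_x\le\rho_{\max}$ with $|E\cap B_{\rho_x}(x)|\ge\tau|B_{\rho_x}(x)|$, so that the density is $<\tau$ at every larger radius up to $\rho_{\max}$. If all $\rho_x<\rho_{\max}/5$, a Vitali subfamily gives $|E|\le\sum_i|E\cap B_{5\rho_i}(x_i)|\le 5^d\tau\sum_i|B_{\rho_i}(x_i)|\le C(d)\,\tau\,|B_r\cap\{u\ge\delta t-CT\}|$. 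You then fix $\tau$ with $C(d)\tau\le\frac12$ before taking $\delta=\delta(d,p,s,\tau)$ from Lemma \ref{expan}.

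Two further corrections are needed. First, with $t_i=\delta^{-i}(I+C'T)$ you get $\delta t_i-CT=t_{i-1}-CT<t_{i-1}$, so the inclusion you need fails whenever $T>0$. Use instead $t_i=\delta^{-1}(t_{i-1}+CT)$ (the levels $t\delta^i-\frac{T}{1-\delta}$ of \cite{KuusiHarnack}), or ratio $2/\delta$ with $C'\ge C$. Second, the radius problem you flag must be settled precisely in the alternative case. Lemma \ref{expan} centred at $x\in B_r$ with $R/2$ in place of $R$ needs radius at most $R/32$ (and at most $1$), but $r$ may be as large as $R/2$, so balls comparable to $r$ are not admissible. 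Cap at $\rho_{\max}=r/16$. If some $\rho_x\ge\rho_{\max}/5$, apply Lemma \ref{expan} once on $B_{\rho_x}(x)$. Then apply it a bounded number $N$ of further times with $\tau=1$, on balls of radius $\rho_{\max}/5$ along a chain covering $B_r$. This gives $\essinf_{B_r}u\ge\delta\delta_1^N t-C_NT$ in place of your second alternative. With these three changes, the decay and the layer-cake step go through as you describe.
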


From the local boundedness and the Tail estimate, we now obtain another weak Harnack inequality. For that, we require the following iteration lemma from \cite[Lemma 1.1]{acta}.  
\begin{lemma}\label{iteration1}
Let  $0\leq T_0\leq \tilde{t} \leq T_1$ and assume that $f:[T_0,T_1]\to[0,\infty)$ is a nonnegative bounded function. Suppose that for $T_0\leq \tilde{t} <\hat{t} \leq T_1$, we have
\begin{equation*}\label{itt}
f(\tilde{t})\leq A(\hat{t}-\tilde{t})^{-\alpha} +B +\theta f(\hat{t}),
\end{equation*}
where $A,B,\alpha,\theta$ are nonnegative constants and $\theta<1$. 
Then there exists $C=C(\alpha,\theta)$ such that for every $\rho,R$ and $T_0\leq\rho<R\leq T_1$, we have
\begin{equation*}\label{itt1}
f(\rho)\leq C(A(R-\rho)^{-\alpha}+B).
\end{equation*}
\end{lemma}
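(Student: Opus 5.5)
This is the classical absorption lemma of Giaquinta–Giusti. I would prove it by iterating the hypothesis along a geometric sequence of radii and summing the resulting geometric series. Fix $T_0\le\rho<R\le T_1$ and a parameter $\lambda\in(0,1)$, to be chosen depending only on $\alpha$ and $\theta$. Define
\begin{align*}
t_0=\rho, \qquad t_{i+1}=t_i+(1-\lambda)\lambda^i(R-\rho).
\end{align*}
Then $(t_i)$ is increasing, $t_i\to R$, every $t_i$ lies in $[\rho,R]\subset[T_0,T_1]$, and $t_{i+1}-t_i=(1-\lambda)\lambda^i(R-\rho)$.

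\textbf{Step 1.} Apply the hypothesis with $\tilde t=t_i$ and $\hat t=t_{i+1}$:
\begin{align*}
f(t_i)\le A(1-\lambda)^{-\alpha}\lambda^{-i\alpha}(R-\rho)^{-\alpha}+B+\theta f(t_{i+1}).
\end{align*}

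\textbf{Step 2.} Iterate this $k$ times starting from $f(t_0)=f(\rho)$:
\begin{align*}
f(\rho)\le \theta^k f(t_k)+\sum_{i=0}^{k-1}\theta^i\Big(A(1-\lambda)^{-\alpha}(R-\rho)^{-\alpha}\lambda^{-i\alpha}+B\Big).
\end{align*}

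\textbf{Step 3.} Let $k\to\infty$. Since $f$ is bounded and $\theta<1$, the term $\theta^k f(t_k)$ tends to $0$; this is the only place boundedness of $f$ is used. The sum $\sum_i \theta^i B$ converges to $B/(1-\theta)$. The remaining series is $\sum_i(\theta\lambda^{-\alpha})^i$, which converges as long as $\theta\lambda^{-\alpha}<1$.

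\textbf{Choice of $\lambda$.} This is the only point needing care. Take $\lambda\in(0,1)$ with $\lambda^{\alpha}>\theta$, for instance $\lambda=\theta^{1/(2\alpha)}$ when $\theta>0$ and $\alpha>0$. Then $\theta\lambda^{-\alpha}=\theta^{1/2}<1$. The degenerate cases $\theta=0$ or $\alpha=0$ are immediate: any fixed $\lambda$, e.g. $\lambda=1/2$, works.

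\textbf{Conclusion.} Passing to the limit gives
\begin{align*}
f(\rho)\le \frac{(1-\lambda)^{-\alpha}}{1-\theta\lambda^{-\alpha}}\,A(R-\rho)^{-\alpha}+\frac{B}{1-\theta},
\end{align*}
which is the claim with $C=\max\Big\{(1-\lambda)^{-\alpha}(1-\theta\lambda^{-\alpha})^{-1},\,(1-\theta)^{-1}\Big\}$. This constant depends only on $\alpha$ and $\theta$, as required.
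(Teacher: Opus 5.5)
Your proof is correct and is essentially the argument behind this lemma: the paper gives no proof of its own and simply cites \cite[Lemma 1.1]{acta}, whose standard proof is the same iteration along $t_{i+1}=t_i+(1-\lambda)\lambda^i(R-\rho)$ with $\theta\lambda^{-\alpha}<1$, followed by summing the geometric series and using boundedness of $f$ to kill $\theta^k f(t_k)$. Your explicit choice $\lambda=\theta^{1/(2\alpha)}$ and your separate treatment of the cases $\theta=0$ or $\alpha=0$ are fine.
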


\begin{proposition}{\label{prop}}
Assume that $u$ is a weak supersolution to \eqref{eqn-scaled} satisfying $u \geq 0$ in $B_R(x_0) \subset \Omega$. Let $0<r\leq1$ be such that $B_r(x_0) \subset B_{R}(x_0)$.
Then there exists $\zeta_2(d,p)>0$ such that for $\|V\|_{L^q(\Omega)} \le \zeta_2$,
\begin{align}\label{mixed-harnack-1}
    \underset{B_{\frac{r}{2}}(x_0)}{\esssup}\,\,u \le C(d,s,p) \left( \theta^{\frac{1}{p-1}}\Big(\frac{r}{R}\Big)^\frac{p}{p-1}\Tail(u^{-};x_0,R) + \left(\fint_{B_r(x_0)}u^{t}\right)^{\frac1{t}} \right),
\end{align}
where $t \in (0, p\sigma)$ with $\sig=\frac{(dp-d+p)q-d}{(dp-d+p)q-dp}$.
\end{proposition}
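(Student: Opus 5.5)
The plan is to combine the local boundedness estimate of Proposition \ref{local-boundedness-I} with the Tail estimate of Lemma \ref{Tail}, and then lower the integrability exponent from $p\sigma$ to an arbitrary $t\in(0,p\sigma)$ by interpolation and the iteration Lemma \ref{iteration1}. Both ingredients require $u$ to be a weak subsolution, indeed a weak solution for Lemma \ref{Tail}, so I will work with $u$ a weak solution of \eqref{eqn-scaled} that is nonnegative in $B_R$. This is the setting in which \eqref{mixed-harnack-1} is used for Theorem \ref{harnack_intro}; the statement above is phrased for supersolutions, but I do not see how to bound $\esssup u$ without the subsolution inequality.

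\textbf{Step 1: a sup estimate between two concentric balls.} Fix $\frac r2\le \rho'<\rho\le r$ and set $f(\rho):=\esssup_{B_\rho} u$, which is finite by Proposition \ref{local-boundedness-I}. For $z\in B_{\rho'}$ I apply Proposition \ref{local-boundedness-I} on $B_{\rho-\rho'}(z)\subset B_\rho$. This gives
\[
\esssup_{B_{(\rho-\rho')/2}(z)} u\le \delta\theta^{\frac1{p-1}}\Tail\big(u^+;z,\tfrac{\rho-\rho'}2\big)+C\delta^{-\gamma}\Big(\fint_{B_{\rho-\rho'}(z)}u^{p\sigma}\Big)^{\frac1{p\sigma}},
\]
where $\gamma=\frac{(p-1)p^*}{p(p^*-p\sigma)}$. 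The shifted tail is split into two regions:
\begin{itemize}
\item On $B_r\setminus B_{(\rho-\rho')/2}(z)$ I use $u\le f(r)$, which contributes $C\,(\frac{r}{\rho-\rho'})^{\frac{d}{p-1}}f(r)$.
\item On $\mathbb R^d\setminus B_r$ I use $|y-x_0|\le \frac{r}{r-\rho'}|y-z|$, which gives $C(\frac{r}{\rho-\rho'})^{\frac{d+sp}{p-1}}\Tail(u^+;x_0,r)$ up to powers.
\end{itemize}
Lemma \ref{Tail} then bounds $\Tail(u^+;x_0,r)$ by $C\theta^{\frac1{1-p}}f(r)+C(\frac rR)^{\frac p{p-1}}\Tail(u^-;x_0,R)$. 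Since $\theta\le1$, the factors $\theta^{\frac1{p-1}}\theta^{\frac1{1-p}}$ cancel. After taking $\sup_z$ and enlarging the average to $B_r$, this yields
\[
f(\rho')\le C\delta\Big(\tfrac{r}{\rho-\rho'}\Big)^{N}f(r)+C\delta^{-\gamma}\Big(\tfrac{r}{\rho-\rho'}\Big)^{\frac d{p\sigma}}\Big(\fint_{B_r}u^{p\sigma}\Big)^{\frac1{p\sigma}}+C\theta^{\frac1{p-1}}\Big(\tfrac rR\Big)^{\frac p{p-1}}\Tail(u^-;x_0,R),
\]
with $N=N(d,p,s)$.

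\textbf{Step 2: fixing the tail so the iteration closes.} The difficulty is that the bad term involves $f(r)$, not $f(\rho)$. To fix this I first run Step 1 with $\rho=r$, $\rho'=r/2$ only to establish that everything is finite. I then redo it with $r$ replaced by $\rho$ throughout: the inner region becomes $B_\rho$, and Lemma \ref{Tail} is applied at radius $\rho$, using $(\rho/R)\le(r/R)$. The bad term becomes $C\delta(\frac{\rho}{\rho-\rho'})^{N}f(\rho)$. This factor is not uniformly small, so instead of choosing $\delta$ once, I choose $\delta=\delta(\rho,\rho')=\frac1{4C}(\frac{\rho-\rho'}{r})^{N}$. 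The price is that $\delta^{-\gamma}$ becomes a further negative power of $(\rho-\rho')/r$, which is harmless. The result is
\[
f(\rho')\le\tfrac14 f(\rho)+C\Big(\tfrac{r}{\rho-\rho'}\Big)^{M}\Big(\fint_{B_\rho}u^{p\sigma}\Big)^{\frac1{p\sigma}}+C\theta^{\frac1{p-1}}\Big(\tfrac rR\Big)^{\frac p{p-1}}\Tail(u^-;x_0,R).
\]

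\textbf{Step 3: interpolation and iteration.} For $t<p\sigma$ I use
\[
\Big(\fint_{B_\rho}u^{p\sigma}\Big)^{\frac1{p\sigma}}\le f(\rho)^{1-\frac t{p\sigma}}\Big(\fint_{B_\rho}u^t\Big)^{\frac1{p\sigma}}\le f(\rho)^{1-\frac t{p\sigma}}\,2^{\frac{d}{p\sigma}}\Big(\fint_{B_r}u^t\Big)^{\frac1{p\sigma}}.
\]
Young's inequality with exponents $\frac{p\sigma}{p\sigma-t}$ and $\frac{p\sigma}{t}$ then gives
\[
f(\rho')\le\tfrac12 f(\rho)+C(\rho-\rho')^{-\frac{Mp\sigma}{t}}r^{\frac{Mp\sigma}{t}}\Big(\fint_{B_r}u^t\Big)^{\frac1t}+C\theta^{\frac1{p-1}}\Big(\tfrac rR\Big)^{\frac p{p-1}}\Tail(u^-;x_0,R).
\]
Lemma \ref{iteration1}, applied on $[\frac r2,r]$ with the choice $\rho=r$, $\rho'=\frac r2$ in its conclusion, yields \eqref{mixed-harnack-1}.

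\textbf{Main obstacle.} I expect the main difficulty to be the bookkeeping of the shifted tails $\Tail(u^+;z,\cdot)$ in Steps 1–2. The comparisons $|y-x_0|\lesssim \frac{\rho}{\rho-\rho'}|y-z|$ must be made carefully, and Lemma \ref{Tail} is only available at center $x_0$. Its radius must be $\le1$ and its ball must lie inside $B_R$, so that the $\Tail(u^-;x_0,R)$ term appears with exactly the factor $\theta^{\frac1{p-1}}(r/R)^{\frac p{p-1}}$. Any polynomial loss in $(\rho-\rho')^{-1}$ is absorbed by Lemma \ref{iteration1}.
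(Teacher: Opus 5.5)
Your proof is correct and follows the paper's route: Proposition \ref{local-boundedness-I} combined with Lemma \ref{Tail}, then a covering argument, interpolation with Young's inequality, and Lemma \ref{iteration1}. You are also right that it needs $u$ to be a weak solution: the paper's proof uses exactly these two results and Theorem \ref{harnack} applies the proposition only to solutions, so ``supersolution'' in the statement is a slip (supersolutions need not even be locally bounded).

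The one difference is the covering step, which the paper compresses into a single line with a fixed $\delta=\frac{1}{4C}$. That is justified because Lemma \ref{Tail} can also be applied at the shifted centres $z$, with $\Tail(u^-;z,\cdot)$ controlled by $\Tail(u^-;x_0,R)$ since $u^-=0$ in $B_R(x_0)$ (harmless for $r\le R/2$). Your choice to use Lemma \ref{Tail} only at $x_0$ instead costs a power of $\rho/(\rho-\rho')$ on the shifted $u^+$-tails, which you correctly absorb through a $\rho$-dependent $\delta$ and Lemma \ref{iteration1}.
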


\begin{proof}
   Let $0<\rho<r$. As $\theta\in(0,1]$, using \eqref{loc.bounded} of Proposition \ref{local-boundedness-I}  and \eqref{tailest}, we get 
    \begin{align*}
         \underset{B_{\frac{\rho}{2}(x_0)}}{\esssup}\,u & \leq C \delta \theta^{\frac{1}{p-1}}\left( \theta^{\frac{1}{1-p}}\,\underset{B_\rho(x_0)}{\esssup}\,u+\Big(\frac{\rho}{R}\Big)^\frac{p}{p-1}\Tail(u^{-};x_0,R) \right) +C\delta^{\frac{(1-p)p^*}{p(p^*-p\sig)}}\left(\fint_{B_\rho(x_0)}u^{p\sig}\right)^{\frac1{p\sig}}, \\
         & \le C(d,s,p) \delta \left(\underset{B_\rho(x_0)}{\esssup}\,u+\theta^{\frac{1}{p-1}}\Big(\frac{\rho}{R}\Big)^\frac{p}{p-1}\Tail(u^{-};x_0,R) \right) +C\delta^{\frac{(1-p)p^*}{p(p^*-p\sig)}}\left(\fint_{B_\rho(x_0)}u^{p\sig}\right)^{\frac1{p\sig}},
    \end{align*}
   where $C=C(d,s,p)$. We set $\rho=(\eta-\eta')r,$ with $\frac12\leq\eta'<\eta\leq1$. We have by a covering argument that
   \begin{align*}
       \underset{B_{\eta' r}(x_0)}{\esssup}\,u &\le  C \left( \frac{\delta^{\frac{(1-p)p^*}{p(p^*-p\sig)}}}{(\eta - \eta')^{\frac{d}{p \sig}}} \left( \fint_{B_{\eta r}(x_0)} u^{p \sigma} \right)^{\frac{1}{p \sigma}} + \delta \,\underset{B_{\eta r}(x_0)}{\esssup}\,u + \delta \theta^{\frac{1}{p-1}}\Big(\frac{r}{R}\Big)^\frac{p}{p-1}\Tail(u^{-};x_0,R)
      \right),
   \end{align*}
   where for $t \in (0, p \sigma)$,
   \begin{align*}
       \left( \fint_{B_{\eta r}(x_0)} u^{p \sigma} \right)^{\frac{1}{p \sigma}} \le \left( \underset{B_{\eta r}(x_0)}{\esssup}\,u \right)^{\frac{p \sigma -t}{p \sigma}} \left( \fint_{B_{\eta r}(x_0)} u^{t} \right)^{\frac{1}{p \sigma}}.
   \end{align*}
Choosing $\delta=\frac1{4C(d,p,s)}$ and applying Young's inequality with the conjugate pair $(\frac{p \sigma}{p \sigma -t}, \frac{p \sigma}{t})$, we get
\begin{align*}
    \underset{B_{\eta'r}(x_0)}{\esssup}\,u\leq \frac12\,\underset{B_{\eta r}(x_0)}{\esssup}\,u+\frac{C}{(\eta-\eta')^{\frac{d}{t}}}\left(\fint_{B_{\eta r}(x_0)}u^{t}\right)^{\frac1{t}} + C\theta^{\frac{1}{p-1}} \Big(\frac{r}{R}\Big)^\frac{p}{p-1}\Tail(u^{-};x_0,R),
\end{align*}
for some $C=C(d,p,s)$. Now applying Lemma \ref{iteration1}, with $f(z) = \underset{B_{zr}(x_0)}{\esssup}\,u, \hat{t} = \eta , \tilde{t} = \eta', \alpha = \frac{d}{t}$, we obtain 
\begin{align*}
    \underset{B_{\frac{r}{2}}(x_0)}{\esssup}\,u \le C(d,s,p) \left( \theta^{\frac{1}{p-1}}\Big(\frac{r}{R}\Big)^\frac{p}{p-1}\Tail(u^{-};x_0,R) + \left(\fint_{B_{r}(x_0)}u^{t}\right)^{\frac1{t}} \right),
\end{align*}
for every $t \in (0, p\sigma)$ with $\sig=\frac{(dp-d+p)q-d}{(dp-d+p)q-dp}$. This is indeed \eqref{mixed-harnack-1}.
\end{proof} 

\begin{theorem}{\label{harnack}}
Assume that $u$ is a weak solution to \eqref{eqn-scaled} satisfying $u \geq 0$ in $B_R(x_0) \subset \Omega$. Let $0<r\leq1$ be such that $B_r(x_0) \subset B_{\frac R{2}}(x_0)$. Then there exists $\zeta(d,p)>0$ such that for $\|V\|_{L^q(\Omega)} \le \zeta$,
\begin{equation*}
    \underset{B_{\frac{r}{2}}(x_0)}{\esssup}\,u \le C \,\underset{B_r(x_0)}{\essinf}\, u + C\theta^{\frac{1}{p-1}}\left(\frac{r}{R}\right)^{\frac{p}{p-1}} \Tail_{p-1,sp,p}(u^-; x_0, R),
\end{equation*}
where $C=C(d,p,s)$.
\end{theorem}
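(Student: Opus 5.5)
The plan is to combine the two one-sided estimates already in hand: Proposition \ref{prop}, which bounds the supremum by a small $L^t$-mean plus a tail term, and Lemma \ref{lower}, which bounds a small $L^\lambda$-mean by the infimum plus the same tail term. Since a weak solution is both a sub- and a supersolution, both results apply to $u$. We take $\zeta:=\min\{\zeta_2,\zeta_3\}$, where $\zeta_2,\zeta_3$ are the smallness thresholds of Proposition \ref{local-boundedness-I}, Proposition \ref{prop} and Lemma \ref{lower}.

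\textbf{Step 1 (upper bound).} We apply Proposition \ref{prop} on $B_r(x_0)\subset B_{R/2}(x_0)\subset B_R(x_0)$ with exponent $t:=\lambda$, where $\lambda=\lambda(d,p,s)\in(0,1)$ is the exponent from Lemma \ref{lower}. This choice is admissible because $\lambda<1<p<p\sigma$. It gives
\begin{align*}
\underset{B_{\frac r2}(x_0)}{\esssup}\,u \le C\theta^{\frac{1}{p-1}}\Big(\frac rR\Big)^{\frac{p}{p-1}}\Tail(u^-;x_0,R)+C\left(\fint_{B_r(x_0)}u^{\lambda}\dx\right)^{\frac1\lambda}.
\end{align*}

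\textbf{Step 2 (lower bound).} We apply Lemma \ref{lower} on the same ball $B_r(x_0)\subset B_{R/2}(x_0)$ to bound the $L^\lambda$-mean by $C\,\essinf_{B_r(x_0)}u+C\theta^{\frac1{p-1}}(r/R)^{\frac p{p-1}}\Tail(u^-;x_0,R)$. Substituting this into Step 1 and absorbing constants, all of which depend only on $(d,p,s)$, yields the claimed inequality.

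\textbf{Main obstacle.} Both ingredients must hold on the same ball with the same outer radius $R$, so the geometric hypotheses have to match. Lemma \ref{lower} is stated for $B_r\subset B_{R/2}$, but it rests on Lemma \ref{expan}, which requires $B_r\subset B_{R/16}$. If the proof of Lemma \ref{lower} in fact only yields the estimate on smaller balls, the first thing to try is a covering argument on the infimum side. Concretely, we would apply the lower bound on balls $B_{r/8}(y)$ centred at points $y\in B_{r/2}(x_0)$, replace $\Tail(u^-;y,\cdot)$ by $\Tail(u^-;x_0,R)$ using $u\ge0$ in $B_R(x_0)$ together with the comparison $|z-y|\ge c|z-x_0|$ for $z\notin B_R(x_0)$, and sum over finitely many balls. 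This changes the constants only by factors depending on $d,p,s$.

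A second point to check is the tail in Proposition \ref{prop}. It comes from Lemma \ref{Tail}, which is stated for weak solutions, so Proposition \ref{prop} should be read as a statement for solutions. This is fine here, since $u$ is a solution. Once this is noted, the smallness $\|V\|_{L^q(\Omega)}\le\zeta$ suffices for both steps simultaneously and the theorem follows.
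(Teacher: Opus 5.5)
Your proposal is correct and follows the paper's proof exactly: take $\zeta=\min\{\zeta_2,\zeta_3\}$, apply Proposition \ref{prop} with $t=\lambda$ (admissible since $p\sigma>1$), and then bound the resulting $L^\lambda$-mean by Lemma \ref{lower}. You are right that Proposition \ref{prop} really needs $u$ to be a solution, because it goes through Lemma \ref{Tail}; this is harmless here since $u$ is a solution, and the covering fallback is not needed because Lemma \ref{lower} is stated for $B_r(x_0)\subset B_{R/2}(x_0)$.
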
 
\begin{proof}
We take $\zeta = \min \{\zeta_2, \zeta_3\}>0$. Since $p\sigma>1$, we choose $t=\la$ in Proposition \ref{prop}, and then combine with Lemma \ref{lower}, to conclude the proof.   
\end{proof}

\begin{theorem}{\label{weakharnack}} 
Assume that $u$ is a weak supersolution to \eqref{eqn-scaled} satisfying $u \geq 0$ in $B_R(x_0) \subset \Omega$. Let $0<r\leq1$ be such that $B_r(x_0) \subset B_{\frac R{2}}(x_0)$. Then there exists $\hat{\zeta}(m,d,p)>0$ such that for $\|V\|_{L^q(\Omega)} \le \hat{\zeta}$,
$$
\left(\fint_{B_{\frac r2}\left(x_0\right)} u^l \dx\right)^{\frac{1}{l}} \leq C \,\underset{B_r(x_0)}{\essinf}\, u+C\theta^{\frac{1}{p-1}}\left(\frac{r}{R}\right)^{\frac{p}{p-1}} \Tail_{p-1,sp,p}\left(u^{-} ; x_0, R\right),
$$
whenever $0<l<\kappa(p-1)$, with $\kappa$ as given in \eqref{kappa}. Here $C=C(d, p, s)$.
\end{theorem}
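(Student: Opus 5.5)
The plan is the Moser-type argument of \cite[Theorem 1.2]{KuusiHarnack} / \cite{garainjuha}: bridge from the small exponent $\la$ of Lemma \ref{lower} up to any $l<\kappa(p-1)$ with a finite reverse Hölder iteration built on Lemma \ref{3.3}. Throughout, set $t:=\theta^{\frac{1}{p-1}}\left(\frac{r}{R}\right)^{\frac{p}{p-1}}\Tail(u^-;x_0,R)+\var$ with $\var>0$, work with $w=(u+t)^{\frac{p-m}{p}}$ for $m\in(1,p)$, and let $\var\to0$ at the end. This choice of $t$ gives
\begin{align*}
t^{1-p}R^{-p}\Tail(u^-;x_0,R)^{p-1}\le \theta^{-1} r^{-p},
\end{align*}
so in Lemma \ref{3.3} the tail term, multiplied by $\theta$, is of order $r^{-p}$, like the local terms.

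\textbf{Step 1: reverse Hölder inequality.} Take concentric radii $\frac r2\le\rho'<\rho\le r$ and a cut-off $\phi$ between $B_{\rho'}$ and $B_\rho$ with $|\nabla\phi|\le C/(\rho-\rho')$. By Lemma \ref{3.3} (valid once $\|V\|_{L^q}\le\zeta_1(m,d,p)$), using $\theta\le1$, $r\le1$ and $\rho-\rho'\le r$ so that $\theta(\rho-\rho')^{-ps}$ and $\theta r^{-p}$ are bounded by $(\rho-\rho')^{-p}$, we get
\begin{align*}
\int_{B_\rho}\phi^p|\nabla w|^p\dx\le \frac{C(m)}{(\rho-\rho')^{p}}\int_{B_\rho}w^p\dx.
\end{align*}
Lemma \ref{c.omega_sobo} applied to $w\phi$ then yields
\begin{align*}
\left(\fint_{B_{\rho'}}(u+t)^{\kappa(p-m)}\right)^{\frac1\kappa}\le C\left(\frac{\rho}{\rho-\rho'}\right)^{p}\fint_{B_\rho}(u+t)^{p-m}.
\end{align*}
As $m$ ranges over $(1,p)$, the exponent $\gamma=p-m$ ranges over $(0,p-1)$, and we pass from $\gamma$ to $\kappa\gamma$.

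\textbf{Step 2: finite iteration.} Fix $l<\kappa(p-1)$ and let $\la$ be as in Lemma \ref{lower}; we may take $\la<l$ (otherwise Hölder suffices). Choose a finite chain $\gamma_0<\gamma_1<\dots<\gamma_N$ with $\gamma_{i+1}\le\kappa\gamma_i$, all $\gamma_i<p-1$ for $i<N$, $\gamma_0\le\la$ and $\gamma_N=l$. Use nested radii $r_i=\frac r2+2^{-i-1}r$ and apply Step 1 with $\gamma=\gamma_i$, combined with Hölder to pass from $\kappa\gamma_i$ down to $\gamma_{i+1}$. The number of steps $N$ depends only on $d,p,l,\la$, so the constants, including those depending on $m_i$ through $\zeta_1(m_i)$, stay controlled; $\hat\zeta$ is the minimum of $\zeta_3$ and the $\zeta_1(m_i)$. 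This gives
\begin{align*}
\left(\fint_{B_{r/2}}(u+t)^l\right)^{\frac1l}\le C\left(\fint_{B_r}(u+t)^{\gamma_0}\right)^{\frac1{\gamma_0}}\le C\left(\fint_{B_r}(u+t)^{\la}\right)^{\frac1\la}.
\end{align*}

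\textbf{Step 3: conclusion.} Lemma \ref{lower} bounds the right-hand side by $C\essinf_{B_r}u+C\theta^{\frac1{p-1}}(r/R)^{\frac p{p-1}}\Tail(u^-;x_0,R)+Ct$. Substituting the value of $t$ and letting $\var\to0$ gives the claim.

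\textbf{Main obstacle.} The delicate point is that the constants of Lemma \ref{3.3} blow up as $m\to1$ or $m\to p$, and the smallness $\zeta_1$ depends on $m$. Hence the chain must be fixed in advance from $l$, staying strictly inside $(0,p-1)$, which is also why the range is exactly $l<\kappa(p-1)$. A second point is that Step 1 needs the radii ordering $B_\rho\subset B_{3R/4}$ required by Lemma \ref{3.3}, which is guaranteed here since $B_r\subset B_{R/2}$.
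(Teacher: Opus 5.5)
Your proposal is correct and follows the paper's proof essentially step for step. The paper likewise takes $t$ of tail size plus $\varepsilon$, combines Lemma \ref{3.3} with the Sobolev inequality to obtain the reverse H\"older step $\gamma\mapsto\kappa\gamma$ for $\gamma=p-m\in(0,p-1)$, runs a finite Moser iteration down to the exponent $\lambda$ of Lemma \ref{lower}, and lets $\varepsilon\to0$. Your explicit chain $\gamma_0<\dots<\gamma_N$, with the $m$-dependent smallness thresholds collected into $\hat\zeta$, simply spells out what the paper delegates to Gilbarg--Trudinger and Trudinger.
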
 
\begin{proof}
Let $r \in(0,1), \frac{1}{2}<\tau^{\prime}<\tau \leq \frac{3}{4}$ and $\phi \in \C_c^{\infty}(B_{\tau r})$ be such that $0 \leq \phi \leq 1$ in $B_{\tau r}, \phi=1$ in $B_{\tau^{\prime} r}$ and $|\nabla \phi| \leq \frac{4}{\left(\tau-\tau^{\prime}\right) r}$. For $t>0$ and $m\in(1, p)$, we set
$$
h=u+t \; \text { and } \; w=(u+t)^{\frac{p-m}{p}} .
$$
Observe that
\begin{equation}{\label{weakhar1}}
\int_{B_r} w^p|\nabla \phi|^p \dx \leq \frac{C(p) r^{-p}}{\left(\tau-\tau^{\prime}\right)^p} \int_{B_{\tau r}} w^p \dx,    
\end{equation}
Noting $r \in(0,1]$ and using the fact that support of $\nabla\phi$ is a subset of $B_{\tau r}$, we have
\begin{align}{\label{weakhar2}} 
\theta\iint_{B_r \times B_r} \max \{w(x), w(y)\}^p|\phi(x)-\phi(y)|^p \dm &\leq \frac{C(p)r^{-p}}{\left(\tau-\tau^{\prime}\right)^p} \iint_{B_{\tau r} \times B_{\tau r}} \frac{(w^p(x)+ w^p(y))}{|x-y|^{d+ps-p}} \dx  \nonumber\\&\leq r^{p-ps}\frac{C(p) r^{-p}}{\left(\tau-\tau^{\prime}\right)^p} \int_{B_{\tau r}} w^p \dx\nonumber\\&\leq \frac{C(p) r^{-p}}{\left(\tau-\tau^{\prime}\right)^p} \int_{B_{\tau r}} w^p \dx,   
\end{align}
for some $C=C(d,p,s)$.
Assume that $\operatorname{Tail}\left(u^{-} ; x_0, R\right)$ is positive. Then for any $\varepsilon>0$ and $r \in(0,1)$ choosing
$$
t=\frac{1}{2}\theta^{\frac{1}{p-1}}\left(\frac{r}{R}\right)^{\frac{p}{p-1}} \operatorname{Tail}\left(u^{-} ; x_0, R\right)+\varepsilon>0,
$$
and noting that
\begin{equation}{\label{weakhar3}}
\underset{z \in \operatorname{supp} \phi}{\esssup}\, \int_{\rd \backslash B_r} \frac{\dy}{|z-y|^{d+ps}} \leq C(d, p, s) r^{-p},
\end{equation}
we obtain
\begin{align}{\label{weakhar4}}
&\left(\theta\underset{z \in \operatorname{supp} \phi}{\esssup}\, \int_{\rd \backslash B_r}\frac{\dy}{|z-y|^{d+ps}} +\theta t^{1-p} R^{-p} \operatorname{Tail}(u^{-} ; x_0, R)^{p-1}\right)\int_{B_r} w^p \phi^p \dx  \no\\&\leq C(d, p, s) \frac{r^{-p}}{\left(\tau-\tau^{\prime}\right)^p} \int_{B_{\tau r}} w^p \dx,
\end{align}
where we use the fact that $\tau-\tau'\in (0,1)$. If $\operatorname{Tail}\left(u^{-} ; x_0, R\right)=0$, we can choose $t=\varepsilon>0$ and again using \eqref{weakhar3} the estimate in \eqref{weakhar4} follows. We take $\hat{\zeta} = \min \{\zeta_1, \zeta_3\}>0$. Now using Sobolev inequality in \eqref{e.friedrich} and the fact that $\phi = 1$ in $B_{\tau^{\prime} r}, r \in(0,1)$, we combine Lemma \ref{3.3} with the estimates \eqref{weakhar1}, \eqref{weakhar2}, and \eqref{weakhar4}. Consequently, we get
\begin{align*}
\left(\fint_{B_{\tau^{\prime} r}} h^{\frac{d(p-m)}{d-p}} \dx\right)^{\frac{p}{p^*}} & =\left(\fint_{B_{\tau^{\prime} r}} w^{p^*} \dx\right)^{\frac{p}{p^*}} \leq\left(\fint_{B_{\tau r}}|w \phi|^{p^*} \dx\right)^{\frac{p}{p^*}} \no\\
& \leq(\tau r)^{p-d} \int_{B_{\tau r}}|\nabla(w \phi)|^p \dx \leq \frac{C}{\left(\tau-\tau^{\prime}\right)^p} \fint_{B_{\tau r}} w^p \dx,
\end{align*}
where $C=C(d, p, s, m)$. For $m \in(1, p)$ using the Moser iteration technique as in \cite[Theorem 8.18]{gilbarg1998elliptic} and \cite[Theorem 1.2]{Trudinger}, we get
\begin{align*}
\left(\fint_{B_{\frac{r}{2}}} h^l \dx\right)^{\frac{1}{l}} &\leq C\left(\fint_{B_{\frac{3 r}{4}}} h^{l^{\prime}} \dx\right)^{\frac{1}{l^{\prime}}},\; \forall\, 0<l^{\prime}<l<\frac{d(p-1)}{d-p}.
\end{align*}
Also observe that $$\fint_{B_{\frac{r}{2}}} u^l\leq\fint_{B_{\frac{r}{2}}} h^l,\; \forall\, l>0.$$
For $\la \in(0,1)$ as in Lemma \ref{lower}, we take $l^{\prime}=\la$ to obtain
\begin{align}{\label{weakhar7}}
&{\left(\fint_{B_{\frac{r}{2}}} u^l \dx\right)^{\frac{1}{l}}} \leq C\left(\fint_{B_{\frac{3 r}{4}}} h^{l^{\prime}} \dx\right)^{\frac{1}{l^{\prime}}} \leq C\left(\fint_{B_{\frac{3 r}{4}}} u^{l^{\prime}} \dx\right)^{\frac{1}{l^{\prime}}} +Ct
\no\\&{\leq C \,\underset{B_r}{\essinf}\,u+C\theta^{\frac{1}{p-1}}\left(\frac{r}{R}\right)^{\frac{p}{p-1}} \operatorname{Tail}\left(u^{-} ; x_0, R\right)}+Ct,\; \forall \, 0<l<\frac{d(p-1)}{d-p},
\end{align} 
where $C=C(l^\prime)$. We have neglected the dependence of $C$ on $l^\prime$ as $l^\prime=\la(d,p,s)$. For any $\varepsilon>0$, choosing
$$
t=\frac{1}{2}\theta^{\frac{1}{p-1}}\left(\frac{r}{R}\right)^{\frac{p}{p-1}} \operatorname{Tail}\left(u^{-} ; x_0, R\right)+\varepsilon,
$$
in \eqref{weakhar7} and letting $\varepsilon \rightarrow 0$, the result follows.  
\end{proof}

In the following remark, we note the relation between tail terms, average integral, $\esssup$ and $\essinf$ of $u_{\rho}$ and $u$. 
\begin{remark}\label{scaled to original} 
Recall $\theta$ from Remark \ref{scale}. For $\rho>0$, let $y_0= \rho x_0$. We observe that 
    \begin{align*}
      \theta  \{\Tail_{p-1,sp,p}\left(u_{\rho}^- ; x_0, R\right)\}^{p-1} & = R^{p} \rho^{d+sp}\rho^{p-sp}\,\int_{\rd\setminus B_R(x_0)} \frac{|u^-(\rho x)|^{p-1}}{|\rho x-y_0|^{d+sp}}\dx \\
      & = 2^{1-p} R^{p} \rho^{d+sp}\rho^{p-sp}\,\int_{\rd\setminus B_R(x_0)} \frac{||u(\rho x)|-u(\rho x)|^{p-1}}{|\rho x-y_0|^{d+sp}}\dx \\
        & = 2^{1-p} R^{p} \rho^{p} \,\int_{\rd\setminus B_{\rho R}(y_0)} \frac{||u(y)|-u(y)|^{p-1}}{|y-y_0|^{d+sp}} \dy\\
        &= R^{p} \rho^{p} \, \int_{\rd\setminus B_{\rho R}(y_0)} \frac{|u^-(y)|^{p-1}}{|y-y_0|^{d+sp}} \dy = \{\Tail_{p-1,sp,p}\left(u^- ; y_0, \rho R\right)\}^{p-1}.
    \end{align*}
   We also see that 
   \begin{align*}
    & \fint_{B_r(x_0)} u_{\rho}^l \dx = \frac{1}{\rho^d|B_r(x_0)|} \int_{B_{\rho r}(y_0)} u^l \dx = \frac{1}{\rho^d|B_r(y_0)|} \int_{B_{\rho r}(y_0)} u^l \dx = \fint_{B_{\rho r}(y_0)}  u^l \dx,  \\
    &  \underset{B_{r}(x_0)}{\esssup}\, u_{\rho} = \underset{B_{\rho r}(y_0)}{\esssup}\, u, \text{ and }  \underset{B_r(x_0)}{\essinf}\, u_{\rho} = \underset{B_{\rho r}(y_0)}{\essinf}\, u.
   \end{align*}
\end{remark}

\noi \textbf{Proof of Theorem \ref{harnack_intro} and Theorem \ref{weakhar_intro}:} For $\tilde{\Omega}$ as given in Remark \ref{scale}, we choose $\rho_0 \in (0,1)$ such that $\norm{V_{\rho_0}}_{\tilde{\Omega}}< \min\{\zeta, \hat{\zeta}\}$ and $R_0 = \rho_0R$. Then the proof follows combining Theorem \ref{harnack}, Theorem \ref{weakharnack}, and Remark \ref{scaled to original}. \qed

\vspace{0.2 cm}
\noindent \textbf{Acknowledgments:}	The research of N.B. is supported by the National Board for Higher Mathematics Postdoctoral Fellowship (0204/16(9)/2024/RD-II/6761). S.D. acknowledges the financial support provided by Indian Institute of Technology Kanpur. 

%\noindent \textbf{Author contributions statement:} N.B. and S.D. contributed equally to this work.

\noindent \textbf{Competing interests:} The authors have no competing interests to declare that are relevant to the content of this article.

\noindent\textbf{Data availability statement:} Data sharing does not apply to this article as no data sets were generated or analysed during the current study.

\bibliographystyle{abbrv}

\end{document}